\newtheorem{thm}{Theorem} [section]
\newtheorem{lem}[thm]{Lemma}
\newtheorem{cor}[thm]{Corollary}
\newtheorem{prop}[thm]{Proposition}
\theoremstyle{definition}
\newtheorem{rem}[thm]{Remark}
\numberwithin{equation}{section}
\newcommand{\V}{\mathbb{V}}
\newcommand{\inner}[1]{\langle #1 \rangle}
\title[Super-extensions of tensor algebras and their applications]
{Super-extensions of tensor algebras and their applications}
\author[Run-Qiang Jian]{Run-Qiang Jian}
\address{School of computer science and technology, Dongguan University of Technology, Dongguan 523808, China}
\email{jianrq@dgut.edu.cn (Jian)}
\author[Xianfa Wu]{Xianfa Wu}
\address{School of Mathematical Sciences, East China Normal University, Shanghai 200241, China}
\email{52275500051@stu.ecnu.edu.cn (Wu)}
\begin{document}

\begin{abstract}
  We construct a super-extension of the usual $q$-tensor algebra through super-actions of Hecke algebras.
  A double centralizer property is established on this extension space, which provides a modest generalization of the quantum Schur-Sergeev duality.
  As an application, we present a new proof of the quantum Schur-Sergeev duality.
\end{abstract}

\maketitle
\setcounter{tocdepth}{1}

\section{Introduction}
Denote by $V^{\otimes d}$ the $d$-fold of a vector space $V$.
In \cite{Sc27}, Schur studied the relation between the permutation action of the symmetric group $\mathfrak{S}_d$ and the diagonal action of the general linear group $\mathrm{GL}(V)$ on $V^{\otimes d}$. He pointed out that the actions of these two groups on $V^{\otimes d}$ admit a double centralizer property, which is just the celebrated Schur-Weyl duality.
Since its importance in representation theory, this duality has been generalized in several directions during the past decades.
In the late 1980s, the Schur-Sergeev duality, which is a super version of the Schur-Weyl duality, was established by Sergeev \cite{Ser85} (also by Berele and Regev \cite{BR87} independently). Meanwhile, Jimbo \cite{Jim86} introduced a duality between Hecke algebras and general linear quantum groups, which is called the Schur-Jimbo duality nowadays.
In \cite{Mi06}, Mitsuhashi provided a super quantized version of the Schur-Weyl duality, which can be regarded as a common generalization of the dualities due to Sergeev and Jimbo.

Usually, in order to prove these dualities, people use the double commutant theorem. For example, for the classical Schur-Weyl duality, one needs to show that $\mathbb{C}\mathrm{GL}(V)=\mathrm{End}_{\mathbb{C}\mathfrak{S}_d}(V^{\otimes d})$. Then by the semisimplicity of the group algebra $\mathbb{C}\mathfrak{S}_d$ one has $\mathbb{C}\mathfrak{S}_d=\mathrm{End}_{\mathbb{C}\mathrm{GL}(V)}(V^{\otimes d})$. The verification of the inclusion $\mathbb{C}\mathrm{GL}(V)\subset\mathrm{End}_{\mathbb{C}\mathfrak{S}_d}(V^{\otimes d})$ is trivial. But the reverse direction requires further non-trivial techniques. We refer the reader to \cite{GW03} and \cite{CW12} for detailed proof.

In \cite{It12}, Itoh generalized the usual tensor algebra $T(V)$ by considering actions of symmetric groups.
He studied operators of left multiplication by vectors and linear functionals, and established a duality theorem between the algebra generated by these operators and the infinity symmetric group.
As a consequence, he provided a new proof of the Schur-Weyl duality theorem.
In a subsequent paper \cite{It15}, he generalized his construction to quantum case in which symmetric group actions are replaced by Hecke algebra actions.
Using a similar argument, he gave a new proof of Schur-Jimbo duality.

The main purpose of this paper is to generalize Itoh's results to the quantum super case.
First, we establish the relevant construction for the super case and obtain corresponding results.
Then, based on the special challenges of the quantum situation,
we achieve the quantum super case through a generally similar but slightly different approach.
For the $q$-tensor superspace $\V^{\otimes d}$, we introduce a map $\gamma$ to simplify
and unify the parity calculation problems that arise in the action of the Hecke algebra $\mathscr{H}_d$.
Next, we construct the quantum super-extension algebra
\[
  \widetilde{T}(\V)=\bigoplus_{d\geq 0}\V^{\otimes d}\otimes_{\mathscr{H}_d}\mathscr{H}_{\infty},
\]
and define operators $L(\phi)$, $L(v^\ast)$ over it.
Following this, we carefully verify the well-definedness of the relevant operators and compute their commutation relations.
Finally, in Theorem ~\ref{g-duality} we demonstrate a Schur-Weyl type duality
\[
  \mathcal{A}_d^k \curvearrowright \V^{\otimes d} \otimes_{\mathscr{H}_d} \mathscr{H}_{d+k} \curvearrowleft \mathscr{H}_{d+k},
\]
where $\mathcal{A}_d^k$ is the subalgebra of $\mathrm{End}_{\mathbb{C}(q)}(\V^{\otimes d} \otimes_{\mathscr{H}_d} \mathscr{H}_{d+k})$
spanned by operators of the form
\[
  L(w_d)\cdots L(w_1)L(\sigma)L(v_1^\ast)\cdots L(v_d^\ast)
\]
with $w_1,\ldots,w_d\in \V$, $\sigma\in \mathscr{H}_{d+k}$,  and $v_1^*,\ldots,v_k^* \in \V^*$.
In the special case of $k=0$, we recover Mitsuhashi's duality, and Schur-Sergeev duality is a corollary for the non-quantum case.

This paper is organized as follows.
In Sections 2 - 4, we construct the super-extension algebra, study the operators defined over it and prove the duality theorems for super case finally.
In Sections 5 - 7, we deal with the quantum super version.

\subsection*{Acknowledgement}
We would like to thank Li Luo for many helpful discussions.
Especially, he introduced the Schur-Sergeev duality and its quantum version to us which initiates this work.
We would also like to thank the referee for thorough review and helpful suggestions.

\section{Super-extensions through symmetric groups}

\subsection{Symmetric groups}
For any positive integer $d$, we denote by $\mathfrak{S}_d$ the symmetric group of the set $[d]=\{1,2,\ldots,d\}$, and by $s_i$, $1\leq i\leq d-1$, the transposition interchanging $i$ and $i+1$. We often use cycle notation to denote a permutation. So $\tau=(i_1\ i_2\ \cdots\ i_k)\in \mathfrak{S}_d$ maps $i_r$ to $i_{r+1}$ for $1\leq r\leq k-1$ and $i_k$ to $i_1$,  and fixes other numbers. There is a natural sequence of inclusions: $\mathfrak{S}_1\subset\cdots\subset \mathfrak{S}_d\subset \cdots$. The direct limit is denoted by $\mathfrak{S}_\infty$. One can view $\mathfrak{S}_\infty$ as the group generated by $\{s_i\}_{i\geq 1}$ subject to the relations:
$$s_i^2=1,$$  $$s_is_{i+1}s_i=s_{i+1}s_is_{i+1},$$ and $$s_is_j=s_js_i, \text{ whenever }|i-j|>1.$$ Given $k\geq 1$, we define a group homomorphism $ ^{\uparrow k}:\mathfrak{S}_\infty\rightarrow \mathfrak{S}_\infty$ by $s_i^{\uparrow k}=s_{i+k}$.

\subsection{Superspaces and general linear Lie superalgebras}
Let $m,n$ be two fixed positive integers, and $V=\mathbb{C}^{m|n}=\mathbb{C}^m\oplus \mathbb{C}^n$.
As usual, we set $V_{\overline{0}}=\mathbb{C}^m$ and $V_{\overline{1}}=\mathbb{C}^n$.
Vectors in $V_{\overline{0}}$ (resp. $V_{\overline{1}}$) are of parity $0$ (resp. parity $1$).
The parity of a homogeneous element $v\in V$ is denoted by $|v|$.
In this section, we will denote by $e_i$, $1\leq i\leq m+n$, the vector in $V$ that is 1 in the $i$-th position and 0 elsewhere.
Then $\{e_1,\ldots, e_m\}$ and $\{e_{m+1},\ldots, e_{m+n}\}$ are bases of $V_{\overline{0}}$ and $V_{\overline{1}}$, respectively.
We denote $\{e_1^\ast,\ldots, e_{m+n}^\ast\}$ as the dual basis of $\{e_1,\ldots, e_{m+n}\}$ in the dual space $V^\ast$, i.e., $e_i^\ast(e_j)=\delta_{ij}$. Here $\delta_{ij}$ is the Kronecker symbol. Define a parity function on this dual basis by requiring that $|e_i^\ast|=|e_i|$ for each $1\leq i\leq m+n$. Thus the dual space $V^\ast=V_{\overline{0}}^\ast\oplus V_{\overline{1}}^\ast$ of $V$ is a $\mathbb{Z}_2$-graded vector space.

The general linear Lie superalgebra $\mathfrak{gl}(m|n)$ is
 $$\bigg\{\left(\begin{array}{cc}
A&B\\
C&D
\end{array}\right)\bigg|~A\in M_{m\times m}(\mathbb{C}),B\in M_{m\times n}(\mathbb{C}),C\in M_{n\times m}(\mathbb{C}),D\in M_{n\times n}(\mathbb{C})\bigg\}.$$ We can see that $$\mathfrak{gl}(m|n)=\mathfrak{gl}(m|n)_{\overline{0}}\oplus \mathfrak{gl}(m|n)_{\overline{1}},$$where $$\mathfrak{gl}(m|n)_{\overline{0}}=\bigg\{\left(\begin{array}{cc}
A&0\\
0&D
\end{array}\right)\bigg|~A\in M_{m\times m}(\mathbb{C}),D\in M_{n\times n}(\mathbb{C})\bigg\},$$ and$$\mathfrak{gl}(m|n)_{\overline{1}}=\bigg\{\left(\begin{array}{cc}
0&B\\
C&0
\end{array}\right)\bigg|~B\in M_{m\times n}(\mathbb{C}),C\in M_{n\times m}(\mathbb{C})\bigg\}.$$The parity of a homogeneous element $g\in \mathfrak{gl}(m|n)$ is denoted by $|g|$. As usual, for any $1\leq i,j \leq m+n $, let $E_{ij}$ be the $(m+n)\times (m+n)$ matrix with $1$ in the $(i,j)$-position and $0$ elsewhere. Then $|E_{ij}|=|e_i|+|e_j^\ast|$.

\subsection{Representations on $V^{\otimes d}$}
For further discussion, we make some conventions. From now on, elements in $V$ or $\mathfrak{gl}(m|n)$ appeared in our paper are always assumed to be homogeneous unless otherwise specified. Subscripts are often arranged in a decreasing order. For example, a pure $d$-tensor of $V$ is written as $v_d\otimes\cdots\otimes v_1$.

The general linear Lie superalgebra $\mathfrak{gl}(m|n)$ acts from left on $V$ in a natural way. This action induces a representation $\rho_d$ of the enveloping algebra $U(\mathfrak{gl}(m|n))$ on $V^{\otimes d}$ for each $d\geq 1$ as follows: $$\rho_d(g)(v_d\otimes \cdots\otimes v_1)=\sum_{k=1}^d(-1)^{|g|(|v_d|+\cdots+|v_{k+1}|)}v_d\otimes\cdots\otimes gv_k\otimes \cdots\otimes v_1,$$ where $g\in \mathfrak{gl}(m|n)$ and $v_1,\ldots,v_d\in V$.

On the other hand, the symmetric group $\mathfrak{S}_d$ acts from right on $[m+n]^d = \{(i_d,\ldots,i_1)|1\leq i_1,\ldots,i_d\leq m+n\}$
as follows: for each $1\leq k<d$ and $I=(i_d,\ldots,i_1) \in [m+n]^d$,
\[I \cdot s_k=(i_d,\ldots,i_{k},i_{k+1},\ldots,i_1).\]
Thus $I \cdot\sigma=(i_{\sigma(d)},\ldots,i_{\sigma(1)})$ for $\sigma\in \mathfrak{S}_d$. We then have a right $\mathfrak{S}_d$-action on $V^{\otimes d}$ defined by
\[
  (v_d\otimes \cdots\otimes v_1) \cdot s_i=(-1)^{|v_i|\cdot|v_{i+1}|}v_d\otimes\cdots\otimes v_i\otimes v_{i+1}\otimes \cdots\otimes v_1.
\]
For example, if $\sigma=(123)=s_1s_2$, then
\[
  (v_3\otimes v_2\otimes v_1) \cdot \sigma = (-1)^{|v_2|\cdot |v_1|+|v_3|\cdot |v_1|}v_1\otimes v_3\otimes v_2.
\]
In general, for any $\sigma\in \mathfrak{S}_d$, \begin{equation}\label{Action of permutation}(v_d\otimes v_{d-1}\otimes \cdots\otimes v_1) \cdot \sigma=\varepsilon_{v_d,\ldots,v_1;\sigma}v_{\sigma(d)}\otimes v_{\sigma(d-1)}\otimes \cdots\otimes v_{\sigma(1)},\end{equation}where $\varepsilon_{v_d,\ldots,v_1;\sigma}$ is either $1$ or $-1$, depending on $v_d,\ldots,v_1$ and $\sigma$.

\subsection{Super-extensions}
From now on, we will write a pure tensor $v_d\otimes \cdots\otimes v_1\in V^{\otimes d}$ shortly as $v_d\cdots v_1$. We denote $$\overline{T}_d(V)=V^{\otimes d}\otimes_{\mathbb{C}\mathfrak{S}_d} \mathbb{C}\mathfrak{S}_\infty,$$ and $$\overline{T}(V)=\bigoplus_{d\geq 0}\overline{T}_d(V).$$ We define an associative product $\cdot$ on $\overline{T}(V)$ as follows: for any $u_k \cdots   u_1\otimes \tau\in \overline{T}_k(V)$ and $v_d \cdots   v_1\otimes \sigma\in \overline{T}_d(V)$, $$(u_k \cdots  u_1\otimes \tau)\cdot(v_d \cdots  v_1\otimes \sigma)=u_k \cdots   u_1  v_d \cdots v_1\otimes \tau^{\uparrow d}\sigma.$$ For any $v,w\in V$, we have immediately that \begin{equation*}\label{Relation 1}(v w)\cdot s_1=(-1)^{|v|\cdot|w|}w v,\end{equation*} and \begin{equation*}s_i\cdot v=v\otimes s_{i+1}.\end{equation*} For any $\varphi\in \overline{T}(V)$, we define \[\begin{array}{cccc}
L(\varphi): &\overline{T}(V) &\rightarrow& \overline{T}(V),\\[3pt]
& \psi&\mapsto&\varphi\cdot \psi.
\end{array}\]
For a homogeneous element $v^\ast \in V^\ast$, we define $L(v^\ast):\overline{T}(V)\rightarrow \overline{T}(V)$ by \begin{align*}
\lefteqn{L(v^\ast)(v_d  \cdots  v_1\otimes \sigma)}\\[3pt]
&=\sum_{k=1}^d(-1)^{|v^\ast|(|v_d|+\cdots+|v_{k+1}|)}\inner{v^\ast, v_k}v_d \cdots  \hat{v}_k \cdots  v_1\otimes (d\ d-1\ \cdots\ k+1\ k)\sigma\\[3pt]
&=\sum_{k=1}^d\Big((v^\ast\otimes \mathrm{id}_{V^{\otimes d-1}})\big((v_d \cdots  v_1) \cdot (s_k\cdots s_{d-2}s_{d-1})\big)\Big)\otimes s_{d-1}s_{d-2}\cdots s_k\sigma,
\end{align*}where the symbol $\hat{v}_k$ means the omission of $v_k$.
The well-definedness of these two maps can be checked directly and stated formally as:

\begin{prop}\label{Commutant relation}The operators $L(\sigma)$, $L(e_i)$, and $L(e_i^\ast)$ for $\sigma\in \mathfrak{S}_\infty, 1\leq i\leq m+n$ are elements in $\mathrm{End}_{\mathbb{C}\mathfrak{S}_\infty}(\overline{T}(V))$, where $\mathfrak{S}_\infty$ acts on $\overline{T}(V)$ by right multiplication.
\end{prop}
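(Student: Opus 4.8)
The plan is to verify directly that each of the three families of operators commutes with the right $\mathfrak{S}_\infty$-action, using the fact that they are all built from the left multiplication maps $L(\varphi)$ together with the single exceptional operator $L(v^\ast)$. The key structural observation is that the right action of $\mathfrak{S}_\infty$ on $\overline{T}(V)$ is, by construction, multiplication on the $\mathfrak{S}_\infty$-factor of $V^{\otimes d}\otimes_{\mathbb{C}\mathfrak{S}_d}\mathbb{C}\mathfrak{S}_\infty$, i.e.\ $(v_d\cdots v_1\otimes\sigma).\pi = v_d\cdots v_1\otimes\sigma\pi$ for $\pi\in\mathfrak{S}_\infty$. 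So the content is that left multiplication operators, and $L(v^\ast)$, act only on the ``tensor part'' and the ``front'' of the permutation part, in a way that is untouched by multiplying $\sigma$ on the right.

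First I would dispose of $L(\sigma)$ and $L(e_i)$ simultaneously: for any $\varphi\in\overline{T}(V)$ the map $L(\varphi)$ is left multiplication by $\varphi$ in the associative algebra $\overline{T}(V)$, while the $\mathfrak{S}_\infty$-action is a right action, so associativity of the product gives $L(\varphi)(\psi.\pi) = \varphi\cdot(\psi\cdot\pi) = (\varphi\cdot\psi)\cdot\pi = (L(\varphi)\psi).\pi$. One must only check that right multiplication by $\pi\in\mathfrak{S}_\infty$ really is an algebra endomorphism on the right, which is immediate from the formula for $\cdot$: in $(u_k\cdots u_1\otimes\tau)\cdot(v_d\cdots v_1\otimes\sigma)=u_k\cdots u_1 v_d\cdots v_1\otimes\tau^{\uparrow d}\sigma$, replacing $\sigma$ by $\sigma\pi$ on the left-hand factor-pair only changes the output by right multiplication by $\pi$. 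Taking $\varphi=\sigma\in\mathbb{C}\mathfrak{S}_\infty\subset\overline{T}_0(V)$ and $\varphi=e_i\in V=\overline{T}_1(V)$ then finishes these two cases. (One should also remark that these maps are well defined on $\overline{T}(V)$, i.e.\ compatible with the tensor relation over $\mathbb{C}\mathfrak{S}_d$, which is exactly what the associativity of $\cdot$ encodes.)

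The remaining, and genuinely computational, case is $L(e_i^\ast)$, which is \emph{not} a left multiplication operator. Here I would use the second formula given for $L(v^\ast)$,
\[
L(v^\ast)(v_d\cdots v_1\otimes\sigma)=\sum_{k=1}^d\Big((v^\ast\otimes\mathrm{id}_{V^{\otimes d-1}})\big((v_d\cdots v_1).(s_k\cdots s_{d-1})\big)\Big)\otimes s_{d-1}\cdots s_k\sigma,
\]
which exhibits $L(v^\ast)$ as: permute the tensor factors, contract off the top slot, and prepend a fixed permutation $s_{d-1}\cdots s_k$ (lying in the subgroup $\mathfrak{S}_d$) to $\sigma$ on the \emph{left}. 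Since the right $\mathfrak{S}_\infty$-action only alters $\sigma$ by right multiplication by $\pi$, and since prepending $s_{d-1}\cdots s_k$ on the left commutes with appending $\pi$ on the right, the formula applied to $(v_d\cdots v_1\otimes\sigma).\pi = v_d\cdots v_1\otimes\sigma\pi$ produces exactly $\big(L(v^\ast)(v_d\cdots v_1\otimes\sigma)\big).\pi$ term by term. The main obstacle is purely bookkeeping: one must check that $L(v^\ast)$ is well defined on the quotient $V^{\otimes d}\otimes_{\mathbb{C}\mathfrak{S}_d}\mathbb{C}\mathfrak{S}_\infty$ — that is, that replacing $(v_d\cdots v_1\otimes\sigma)$ by $(v_d\cdots v_1).\tau\otimes\tau^{-1}\sigma$ for $\tau\in\mathfrak{S}_d$ gives the same output, which requires tracking the Koszul signs $\varepsilon_{v_d,\ldots,v_1;\tau}$ through the contraction and comparing the resulting permutations in $\mathfrak{S}_\infty$. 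I expect this sign-and-permutation compatibility to be the only nontrivial point, and once it is checked the $\mathfrak{S}_\infty$-equivariance of $L(v^\ast)$ follows as above; combined with the two cases from the previous paragraph this proves the proposition.
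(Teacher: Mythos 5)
The paper offers no proof of this proposition---it simply says the well-definedness ``can be checked directly'' and states the result. Your proposal fills that gap with the natural direct verification, so at the level of approach you and the paper agree, but you supply more structure than the text does.

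Your treatment of $L(\sigma)$ and $L(e_i)$ is clean and correct: both are instances of the generic left-multiplication $L(\varphi)$, and equivariance under the right $\mathfrak{S}_\infty$-action is exactly associativity of the product $\cdot$, while well-definedness over the $\mathbb{C}\mathfrak{S}_d$-tensor relation follows from the fact that $\tau'^{\uparrow d}$ commutes with $\mathfrak{S}_d\subset\mathfrak{S}_\infty$. For $L(e_i^\ast)$, using the second formula and observing that it only \emph{prepends} a permutation in $\mathfrak{S}_d$ to $\sigma$ is the right move; right multiplication by $\pi$ then passes through term by term. The one place your argument stops short is the well-definedness of $L(v^\ast)$ on the quotient $V^{\otimes d}\otimes_{\mathbb{C}\mathfrak{S}_d}\mathbb{C}\mathfrak{S}_\infty$: you correctly isolate it as the only nontrivial computation but explicitly defer it as ``bookkeeping'' and say you ``expect'' it to work. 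It does work---one can either check it on generators $\tau=s_j$ by the kind of sign-tracking you describe (the key point being that the sign $\varepsilon_{v_d,\ldots,v_1;\tau}$ interacts with the parity shift $(-1)^{|v^\ast|(|v_d|+\cdots+|v_{k+1}|)}$ precisely because $|v^\ast|=|v_k|$ on the nonzero terms), or invoke the later identity $L(e_i)L(e_j^\ast)=\rho_d(E_{ij})$ which presupposes it. As written, your proof is a correct outline with one verification left unexecuted rather than a complete argument, but since the paper leaves the entire proposition at that level, this is the same standard of rigor the authors adopted.
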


\section{Properties of operators}
It is easy to verify the following commutation relations.

\begin{prop}\label{Relation 2}For any $1\leq i,j\leq m+n$ and $\sigma\in \mathfrak{S}_\infty$, we have\begin{equation*}L(e_i)L(e_j)=(-1)^{|e_i|\cdot |e_j|}L(e_j)L(e_i)L(s_1),\end{equation*} \begin{equation*}L(e_i^\ast)L(e_j^\ast)=(-1)^{|e_i^
\ast|\cdot |e_j^\ast|}L(s_1)L(e_j^\ast)L(e_i^\ast),\end{equation*}\begin{equation*}L(e_j^\ast)L(e_i)=(-1)^{|e_j|\cdot |e_i|}L(e_i)L(s_1)L(e_j^\ast)+\langle e_j^\ast,e_i\rangle,\end{equation*}
\begin{equation*}L(\sigma)L(e_i)=L(e_i)L(\sigma^{\uparrow 1}),\end{equation*} and \begin{equation*}L(e_i^\ast)L(\sigma)=L(\sigma^{\uparrow 1})L(e_i^\ast).\end{equation*}
\end{prop}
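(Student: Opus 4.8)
The plan is to check each of the five identities by applying both sides to an arbitrary pure element $v_d\cdots v_1\otimes\sigma'\in\overline{T}_d(V)$, with $v_1,\dots,v_d$ homogeneous, and comparing. Since every operator in sight is either a left multiplication on $\overline{T}(V)$ or the derivation $L(v^\ast)$, this reduces each identity to the defining formula for the product on $\overline{T}(V)$, the explicit formula for $L(v^\ast)$, the additivity $(\rho^{\uparrow a})^{\uparrow b}=\rho^{\uparrow(a+b)}$ of the shift homomorphisms, and the $\mathbb{C}\mathfrak{S}_\bullet$-balancing relations built into $\overline{T}_d(V)$; the well-definedness recorded in Proposition~\ref{Commutant relation} guarantees that these manipulations are legitimate. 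The one combinatorial observation that makes the computations close up is that the cycle occurring in $L(v^\ast)$ factors as $(d\ d-1\ \cdots\ k+1\ k)=s_{d-1}s_{d-2}\cdots s_k$, so that after raising the number of tensor slots by one the analogous cycle becomes $(d+1\ d\ \cdots\ k)=s_d\cdot(d\ d-1\ \cdots\ k)$, and $s_d=s_1^{\uparrow(d-1)}$ is exactly the transposition contributed by an interposed $L(s_1)$.

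The last two identities are immediate. For $L(\sigma)L(e_i)=L(e_i)L(\sigma^{\uparrow 1})$, both sides send $v_d\cdots v_1\otimes\sigma'$ to $e_i v_d\cdots v_1\otimes\sigma^{\uparrow(d+1)}\sigma'$: left multiplication by $e_i$ only prepends $e_i$ and raises the degree by one, left multiplication by a group element inserts that element shifted by the degree of its argument, and $(\sigma^{\uparrow 1})^{\uparrow d}=\sigma^{\uparrow(d+1)}$. For $L(e_i^\ast)L(\sigma)=L(\sigma^{\uparrow 1})L(e_i^\ast)$, the key point is that each cycle $(d\ d-1\ \cdots\ k)$ produced by $L(e_i^\ast)$ is supported on $\{1,\dots,d\}$ and therefore commutes in $\mathfrak{S}_\infty$ with $\sigma^{\uparrow d}$, which moves only indices $>d$; this lets $\sigma^{\uparrow d}$ pass through the derivation, and $(\sigma^{\uparrow 1})^{\uparrow(d-1)}=\sigma^{\uparrow d}$ makes the two sides agree.

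For $L(e_i)L(e_j)=(-1)^{|e_i|\cdot|e_j|}L(e_j)L(e_i)L(s_1)$ the left side gives $e_ie_jv_d\cdots v_1\otimes\sigma'$ outright, while on the right $L(s_1)$ first multiplies the $\mathfrak{S}_\infty$-component by $s_{d+1}$; once $L(e_i)$ and $L(e_j)$ have prepended two slots we are in $\overline{T}_{d+2}(V)$, where $s_{d+1}$ lies in $\mathfrak{S}_{d+2}$ and may be absorbed into the tensor part by the right action, transposing the top slots $e_j$ and $e_i$ and contributing a second factor $(-1)^{|e_i|\cdot|e_j|}$ that cancels the one written. The dual relation $L(e_i^\ast)L(e_j^\ast)=(-1)^{|e_i^\ast|\cdot|e_j^\ast|}L(s_1)L(e_j^\ast)L(e_i^\ast)$ is handled the same way, except that both sides are now double sums over ordered pairs of distinct slots to be contracted; matching the left-side term that contracts slot $k$ then slot $l$ with the right-side term that contracts $l$ then $k$, one checks that the Koszul signs agree after extracting the prefactor $(-1)^{|e_i^\ast|\cdot|e_j^\ast|}$ (using that $\langle e_i^\ast,v_l\rangle\langle e_j^\ast,v_k\rangle$ vanishes unless $|v_l|=|e_i|$ and $|v_k|=|e_j|$), while the interposed $L(s_1)$ corrects the difference between the two composite permutations.

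The remaining identity $L(e_j^\ast)L(e_i)=(-1)^{|e_j|\cdot|e_i|}L(e_i)L(s_1)L(e_j^\ast)+\langle e_j^\ast,e_i\rangle$ is the one that needs genuine care, and I expect its sign-and-cycle bookkeeping to be the main obstacle, though it stays routine. Applying $L(e_i)$ and then $L(e_j^\ast)$ to $v_d\cdots v_1\otimes\sigma'$ produces $d+1$ summands. The summand in which $L(e_j^\ast)$ contracts the freshly created top slot $e_i$ has empty Koszul prefactor and trivial cycle, so it equals $\langle e_j^\ast,e_i\rangle\,(v_d\cdots v_1\otimes\sigma')$, which is the correction term. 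Every other summand contracts some $v_k$ and carries the sign $(-1)^{|e_j^\ast|(|e_i|+|v_d|+\cdots+|v_{k+1}|)}$ together with the cycle $(d+1\ d\ \cdots\ k)$. Splitting off $(-1)^{|e_j|\cdot|e_i|}$ via $|e_j^\ast|=|e_j|$ and writing $(d+1\ d\ \cdots\ k)=s_d\,(d\ d-1\ \cdots\ k)$ with $s_d=s_1^{\uparrow(d-1)}$, one recognizes this block of $d$ summands as precisely $(-1)^{|e_j|\cdot|e_i|}L(e_i)L(s_1)L(e_j^\ast)(v_d\cdots v_1\otimes\sigma')$, in which $L(e_j^\ast)$ acts first and supplies the cycle $(d\ d-1\ \cdots\ k)$, $L(s_1)$ promotes it to $s_d\,(d\ d-1\ \cdots\ k)$, and $L(e_i)$ restores the top slot. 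Collecting the two contributions gives the stated identity, and by linearity all five hold on $\overline{T}(V)$.
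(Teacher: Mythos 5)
Your proof is correct, and since the paper offers no proof of Proposition~\ref{Relation 2} beyond the remark that it "is easy to verify," your direct verification on pure tensors $v_d\cdots v_1\otimes\sigma'$ is the natural (indeed, essentially forced) route. The sign-and-cycle bookkeeping you carry out — in particular the identification $(d+1\ d\ \cdots\ k)=s_d\,(d\ d-1\ \cdots\ k)$ with $s_d=s_1^{\uparrow(d-1)}$, and the observation that the cycle $(d\ d-1\ \cdots\ k)$ commutes with $\sigma^{\uparrow d}$ — is exactly what the paper's "direct check" would have to use.
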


The action of $U(\mathfrak{gl}(m|n))$ on $V^{\otimes d}$ can be naturally extended to the action on $\overline{T}_d(V)$,
and it can be expressed in terms of $L(e_i)$ and $L(e_i^\ast)$. By direct calculation, we have
\begin{align*}
  \lefteqn{L(e_i)L(e_j^\ast)(v_d \cdots v_1\otimes \sigma)}\\[3pt]
  &=\sum_{k=1}^d(-1)^{|e_j^\ast|(|v_d|+\cdots+|v_{k+1}|)}\inner{e_j^\ast, v_k}e_i  v_d \cdots  \hat{v}_k \cdots  v_1\otimes s_{d-1}s_{d-2}\cdots s_k\sigma\\[3pt]
  &=\sum_{k=1}^d(-1)^{(|e_i|+|e_j^\ast|)(|v_d|+\cdots+|v_{k+1}|)}\inner{e_j^\ast, v_k} v_d \cdots  v_{k+1}  e_i  v_{k-1} \cdots  v_1\otimes \sigma\\[3pt]
  &=\sum_{k=1}^d(-1)^{(|e_i|+|e_j^\ast|)(|v_d|+\cdots+|v_{k+1}|)} v_d \cdots  v_{k+1}  \inner{e_j^\ast, v_k}e_i  v_{k-1} \cdots v_1\otimes \sigma\\[3pt]
  &=\sum_{k=1}^d(-1)^{|E_{ij}|(|v_d|+\cdots+|v_{k+1}|)} v_d \cdots  v_{k+1}(E_{ij} v_k ) v_{k-1} \cdots  v_1\otimes \sigma\\[3pt]
  &=\big(\rho_d(E_{ij})(v_d  \cdots  v_1)\big)\otimes \sigma.
\end{align*}
In particular, on $V^{\otimes d}$, we have \begin{equation}\label{Eij}\rho_d(E_{ij})=L(e_i)L(e_j^\ast).\end{equation}

The stabilizer of $I \in [m+n]^d$ is denoted by $(\mathfrak{S}_d)_I$, i.e., $(\mathfrak{S}_d)_I=\{\sigma\in \mathfrak{S}_d|I.\sigma=I\}$. For $1\leq k\leq m+n$, denote $$\mathbf{m}_k(I)=\sharp\{j\in[d]~|~i_j=k\},$$ $$\mathbf{m}(I)=(\mathbf{m}_1(I),\ldots,\mathbf{m}_{m+n}(I)),$$ and $$\mathbf{m}(I)!=\mathbf{m}_1(I)!\cdots \mathbf{m}_{m+n}(I)!.$$For example, $\mathbf{m}_1(3,2,3)=0$, $\mathbf{m}_3(3,2,3)=2$, and $\mathbf{m}(3,2,3)=(0,1,2,0,\ldots,0)$. Obviously, one has $|(\mathfrak{S}_d)_I|=\mathbf{m}(I)!$.

\begin{lem}\label{Lemma 1}Set \begin{align*}
\mathcal{S}_1&=\{1,s_1\},\\[3pt]
\mathcal{S}_2&=\{1,s_2,s_2s_1\},\\[3pt]
\mathcal{S}_3&=\{1,s_3,s_3s_2,s_3s_2s_1\},\\[3pt]
&\vdots\\[3pt]
\mathcal{S}_{d-1}&=\{1,s_{d-1},s_{d-1}s_{d-2},\ldots,s_{d-1}s_{d-2}\cdots s_2s_1\}.
\end{align*}Then every $\sigma\in \mathfrak{S}_d$ can be written uniquely as $$\sigma=\sigma_1\sigma_2\cdots\sigma_{d-1},$$ with $\sigma_i\in \mathcal{S}_i$. Moreover, the above expression is reduced.\end{lem}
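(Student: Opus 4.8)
The plan is induction on $d$; the case $d=1$ is trivial, since $\mathfrak{S}_1=\{1\}$ and the product is empty. Two structural facts drive the argument. First, $\mathcal{S}_i\subseteq\mathfrak{S}_{i+1}$, so for $i\le d-2$ we have $\mathcal{S}_i\subseteq\mathfrak{S}_{d-1}=\langle s_1,\ldots,s_{d-2}\rangle$; hence any product $\sigma_1\cdots\sigma_{d-2}$ with $\sigma_i\in\mathcal{S}_i$ lies in $\mathfrak{S}_{d-1}$ and in particular fixes $d$. Second, writing $\rho_j:=s_{d-1}s_{d-2}\cdots s_j$ for $1\le j\le d-1$ and $\rho_d:=1$ (so that $\mathcal{S}_{d-1}=\{\rho_d,\rho_{d-1},\ldots,\rho_1\}$), a direct computation in the composition convention of the paper shows $\rho_j(j)=d$, $\rho_j(k)=k-1$ for $j<k\le d$, and $\rho_j(k)=k$ for $k<j$; in particular $\rho_j^{-1}(d)=j$, so $\rho\mapsto\rho^{-1}(d)$ is a bijection from $\mathcal{S}_{d-1}$ onto $\{1,\ldots,d\}$.

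Existence and uniqueness of the factorization will follow simultaneously. Given $\sigma\in\mathfrak{S}_d$, put $j=\sigma^{-1}(d)$, $\sigma_{d-1}=\rho_j$, and $\tau=\sigma\,\sigma_{d-1}^{-1}$; then $\tau(d)=\sigma(\rho_j^{-1}(d))=\sigma(j)=d$, so $\tau\in\mathfrak{S}_{d-1}$, and the induction hypothesis yields a unique expression $\tau=\sigma_1\cdots\sigma_{d-2}$ with $\sigma_i\in\mathcal{S}_i$, whence $\sigma=\sigma_1\cdots\sigma_{d-1}$. Conversely, in any such factorization of $\sigma$ the initial segment $\sigma_1\cdots\sigma_{d-2}$ fixes $d$ by the first fact, so $\sigma^{-1}(d)=\sigma_{d-1}^{-1}(d)$; the bijection above then forces $\sigma_{d-1}=\rho_{\sigma^{-1}(d)}$, after which $\sigma_1\cdots\sigma_{d-2}=\sigma\,\sigma_{d-1}^{-1}$ is determined and its factorization is unique by induction.

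For the reducedness, let $\ell$ denote the number of inversions. It is enough to prove $\ell(\sigma)=\ell(\tau)+(d-j)$ with $\sigma_{d-1}=\rho_j$ and $\tau=\sigma\sigma_{d-1}^{-1}$ as above: since the written word $s_{d-1}\cdots s_j$ has $d-j$ letters and, by the induction hypothesis, the concatenation of the written words for $\sigma_1,\ldots,\sigma_{d-2}$ is a reduced word for $\tau$ of length $\ell(\tau)$, the concatenation of the written words for $\sigma_1,\ldots,\sigma_{d-1}$ then has length $\ell(\tau)+(d-j)=\ell(\sigma)$, hence is a reduced word for $\sigma$. The length identity is a one-line-notation computation: by the description of $\rho_j$ above, the one-line notation of $\sigma=\tau\rho_j$ is that of $\tau$ (a permutation of $\{1,\ldots,d-1\}$, since $\tau(d)=d$) with the value $d$ inserted into position $j$; inserting the largest value into position $j$ of a sequence of $d-1$ entries creates exactly $d-j$ new inversions and destroys none, so $\ell(\sigma)=\ell(\tau)+(d-j)$.

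Most of this is routine bookkeeping; the two points that require attention are (i) respecting the paper's (right-to-left) composition convention, so that it is the statistic $\rho^{-1}(d)$, not $\rho(d)$, that separates the elements of $\mathcal{S}_{d-1}$, and (ii) the inversion count in the last step. (This is, of course, the standard factorization of $\mathfrak{S}_d$ along the chain $\mathfrak{S}_1\subset\cdots\subset\mathfrak{S}_d$, with $\mathcal{S}_{d-1}$ the set of minimal-length representatives for the cosets $\mathfrak{S}_{d-1}\backslash\mathfrak{S}_d$.)
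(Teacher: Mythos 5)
Your proof is correct and complete. The paper itself does not actually prove this lemma: it simply cites \cite[Corollary~4.4]{KT08}, so there is no ``paper proof'' to compare against in detail. Your argument is the standard one behind that reference --- the sets $\mathcal{S}_i$ are minimal-length representatives for the cosets $\mathfrak{S}_i\backslash\mathfrak{S}_{i+1}$, and you run the induction along the chain $\mathfrak{S}_1\subset\cdots\subset\mathfrak{S}_d$.

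A few small sanity checks that your argument passes. You correctly respect the paper's right-to-left composition convention (as pinned down by the example $(123)=s_1s_2$), so $\rho_j=s_{d-1}\cdots s_j$ is indeed the cycle $(j\ j{+}1\ \cdots\ d)$, and the relevant statistic is $\rho^{-1}(d)$, as you emphasize. The uniqueness step correctly uses $\sigma^{-1}(d)=\sigma_{d-1}^{-1}\bigl((\sigma_1\cdots\sigma_{d-2})^{-1}(d)\bigr)=\sigma_{d-1}^{-1}(d)$ since $\sigma_1\cdots\sigma_{d-2}\in\mathfrak{S}_{d-1}$ fixes $d$. For reducedness, the one-line-notation computation is right: $\sigma=\tau\rho_j$ has one-line notation $\tau(1),\ldots,\tau(j-1),\,d,\,\tau(j),\ldots,\tau(d-1)$, so inserting $d$ at position $j$ adds exactly $d-j$ inversions (with positions $j+1,\ldots,d$) and destroys none; together with the inductive hypothesis on $\tau$ this gives a word of length $\ell(\tau)+(d-j)=\ell(\sigma)$, hence reduced. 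A cardinality check $|\mathcal{S}_1|\cdots|\mathcal{S}_{d-1}|=2\cdot3\cdots d=d!$ confirms that the map $(\sigma_1,\ldots,\sigma_{d-1})\mapsto\sigma_1\cdots\sigma_{d-1}$ must in fact be a bijection once injectivity (uniqueness) is known, which could have been used as a shortcut but is not needed given your existence argument. In short: the proposal supplies a correct, self-contained proof of a statement the paper only outsources to a textbook, and it does so by exactly the decomposition that textbook uses.
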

\begin{proof}For a proof, one can consult \cite[Corollary~4.4]{KT08}.\end{proof}

\begin{prop}\label{New expression}For $I=(i_d,\ldots,i_1)\in [m+n]^d$ and $\tau\in \mathfrak{S}_d$, we have \begin{equation*}L(e_{i_{\tau(d)}})\cdots L(e_{i_{\tau(1)}})L(e_{i_{\tau(1)}}^\ast)\cdots L(e_{i_{\tau(d)}}^\ast)(e_{i_d}  \cdots  e_{i_1})=\mathbf{m}(I)!e_{i_d}  \cdots  e_{i_1}.\end{equation*}\end{prop}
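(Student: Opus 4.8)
The plan is to argue by induction on $d$. The cases $d=0$ and $d=1$ are immediate, since then the displayed operator is the identity, respectively $L(e_{i_1})L(e_{i_1}^\ast)$, while $\mathbf{m}(I)!=1$. For the inductive step I would exploit the factorization
\[
L(e_{i_{\tau(d)}})\cdots L(e_{i_{\tau(1)}})L(e_{i_{\tau(1)}}^\ast)\cdots L(e_{i_{\tau(d)}}^\ast)=L(e_b)\circ\mathcal{Q}\circ L(e_b^\ast),
\]
where $b:=i_{\tau(d)}$ (so that $\mathbf{m}_b(I)\geq 1$) and $\mathcal{Q}:=L(e_{i_{\tau(d-1)}})\cdots L(e_{i_{\tau(1)}})L(e_{i_{\tau(1)}}^\ast)\cdots L(e_{i_{\tau(d-1)}}^\ast)$ involves only the $d-1$ indices $i_{\tau(1)},\dots,i_{\tau(d-1)}$. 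The idea is to peel off the outermost operators $L(e_b^\ast)$ and $L(e_b)$ and to control the inner operator $\mathcal{Q}$ with the induction hypothesis.

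First I would compute $L(e_b^\ast)(e_{i_d}\cdots e_{i_1})$ directly from the definition: it is a sum, indexed by the positions $k$ with $i_k=b$, of terms $\pm\,(T_k\otimes 1).(d\ d-1\ \cdots\ k)$, where $T_k:=e_{i_d}\cdots\hat{e}_{i_k}\cdots e_{i_1}\in V^{\otimes d-1}$ and $.(\,\cdot\,)$ denotes the right $\mathfrak{S}_\infty$-action on $\overline{T}_{d-1}(V)$. By Proposition~\ref{Commutant relation} the operator $\mathcal{Q}$ lies in $\mathrm{End}_{\mathfrak{S}_\infty}(\overline{T}(V))$, so applying $\mathcal{Q}$ to this sum reduces to evaluating $\mathcal{Q}$ on each basis tensor $T_k$ and then reapplying the corresponding cycle. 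Since the index multiset of $T_k$ equals $\{i_{\tau(1)},\dots,i_{\tau(d-1)}\}$ (both are the multiset of $I$ with a single copy of $b$ deleted), for each such $k$ there is a permutation $\tau'\in\mathfrak{S}_{d-1}$ for which $\mathcal{Q}$ is precisely the operator attached to $(T_k,\tau')$ in the statement; the induction hypothesis then yields $\mathcal{Q}(T_k)=\tfrac{\mathbf{m}(I)!}{\mathbf{m}_b(I)}\,T_k$. Summing over $k$ gives
\[
\mathcal{Q}\,L(e_b^\ast)(e_{i_d}\cdots e_{i_1})=\frac{\mathbf{m}(I)!}{\mathbf{m}_b(I)}\,L(e_b^\ast)(e_{i_d}\cdots e_{i_1}).
\]

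It then remains to apply $L(e_b)$. By \eqref{Eij} we have $L(e_b)L(e_b^\ast)=\rho_d(E_{bb})$ on $V^{\otimes d}$, and because $|E_{bb}|=\overline{0}$ one checks at once that $\rho_d(E_{bb})(e_{i_d}\cdots e_{i_1})=\mathbf{m}_b(I)\,e_{i_d}\cdots e_{i_1}$. Combining this with the previous display, the two scalar factors $\tfrac{\mathbf{m}(I)!}{\mathbf{m}_b(I)}$ and $\mathbf{m}_b(I)$ multiply to $\mathbf{m}(I)!$, which completes the induction.

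The step I expect to require the most care is the middle one: it is essential to invoke the statement in its full generality — for an arbitrary $\tau'$, not just $\tau'=\mathrm{id}$ — when handling the inner operator $\mathcal{Q}$, and one must observe that the cycles $(d\ d-1\ \cdots\ k)$ produced by $L(e_b^\ast)$, which in general do not lie in $\mathfrak{S}_{d-1}$, cause no trouble precisely because $\mathcal{Q}$ commutes with the right $\mathfrak{S}_\infty$-action. The two explicit evaluations — of $L(e_b^\ast)$ on a basis tensor and of $\rho_d(E_{bb})$ — are routine and I would not dwell on them.
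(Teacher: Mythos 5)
Your proof is correct, but it follows a genuinely different route from the paper's. The paper gives a single direct computation: it unrolls $L(e_{i_{\tau(1)}}^\ast)\cdots L(e_{i_{\tau(d)}}^\ast)$ by means of the Coxeter-type decomposition of Lemma~\ref{Lemma 1}, so that the composition of $d$ derivations becomes a sum over all $\sigma\in\mathfrak{S}_d$; the resulting pairings impose the stabilizer condition $(\tau\sigma)^{-1}\in(\mathfrak{S}_d)_{I.\tau}$, and reapplying the multiplication operators then counts $|(\mathfrak{S}_d)_{I.\tau}|=\mathbf{m}(I)!$. You instead argue by induction on $d$, peeling the outermost pair $L(e_b)$, $L(e_b^\ast)$ with $b=i_{\tau(d)}$, handling the inner operator $\mathcal{Q}$ by the induction hypothesis, and closing with the identity $L(e_b)L(e_b^\ast)=\rho_d(E_{bb})$ from \eqref{Eij} together with the elementary evaluation $\rho_d(E_{bb})(e_{i_d}\cdots e_{i_1})=\mathbf{m}_b(I)\,e_{i_d}\cdots e_{i_1}$. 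You correctly isolate what makes the inductive step go through: $\mathcal{Q}\in\mathrm{End}_{\mathfrak{S}_\infty}(\overline{T}(V))$ by Proposition~\ref{Commutant relation}, so it passes through the cycle $(d\ d-1\ \cdots\ k)$ that $L(e_b^\ast)$ deposits and reduces to an evaluation on the honest $(d-1)$-tensor $T_k$; and the hypothesis applies to $(\mathcal{Q},T_k)$ because the index multiset of $\mathcal{Q}$ and that of $T_k$ agree (both are $I$ with one copy of $b$ removed), at the cost of invoking the statement for a $k$-dependent $\tau'$, which is exactly why the statement must be carried along in full generality. Your route bypasses Lemma~\ref{Lemma 1} entirely and is arguably cleaner; the paper's route has the advantage of also yielding a closed formula for the intermediate quantity $L(e_{i_{\tau(1)}}^\ast)\cdots L(e_{i_{\tau(d)}}^\ast)(e_{i_d}\cdots e_{i_1})$ as a signed sum over the stabilizer, which may be of independent use.
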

\begin{proof}We have\begin{align*}
\lefteqn{L(e_{i_{\tau(1)}}^\ast)\cdots L(e_{i_{\tau(d)}}^\ast)(e_{i_d}  \cdots  e_{i_1})}\\
&=L(e_{i_{\tau(1)}}^\ast)\cdots L(e_{i_{\tau(d-1)}}^\ast)(e_{i_{\tau(d)}}^\ast\otimes \mathrm{id}_{V^{\otimes d-1}})\Bigg(\sum_{\sigma_{d-1}\in \mathcal{S}_{d-1}}(e_{i_d} \cdots  e_{i_1}).\sigma_{d-1}^{-1}\otimes \sigma_{d-1}\Bigg)\\[3pt]
&=\sum_{\sigma_{d-1}\in \mathcal{S}_{d-1}}L(e_{i_{\tau(1)}}^\ast)\cdots L(e_{i_{\tau(d-2)}}^\ast)\\
&\ \ \ \ \ \ \ \ \ \ \ \  (e_{i_{\tau(d)}}^\ast\otimes e_{i_{\tau(d-1)}}^\ast\otimes \mathrm{id}_{V^{\otimes d-2}})\Bigg(\sum_{\sigma_2\in \mathcal{S}_{d-2}}\big((e_{i_d} \cdots  e_{i_1}).\sigma_{d-1}^{-1}\big).\sigma_{d-2}^{-1}\otimes \sigma_{d-2}\sigma_{d-1}\Bigg)\\[3pt]
&=\cdots\\[3pt]
&=\sum_{\sigma_1\in \mathcal{S}_{1}}\cdots\sum_{\sigma_{d-1}\in \mathcal{S}_{d-1}}(e_{i_{\tau(d)}}^\ast  \cdots   e_{i_{\tau(1)}}^\ast)\big((e_{i_d}  \cdots  e_{i_1}).(\sigma_{d-1}^{-1}\cdots\sigma_{1}^{-1})\big)\otimes \sigma_{1}\cdots\sigma_{d-1}\\[3pt]
&=\sum_{\sigma\in \mathfrak{S}_d}(e_{i_{\tau(d)}}^\ast  \cdots  e_{i_{\tau(1)}}^\ast)\big((e_{i_d}  \cdots   e_{i_1}).\sigma^{-1}\big)\otimes \sigma\\[3pt]
&=\sum_{\sigma\in \mathfrak{S}_d}\varepsilon_{e_{i_d},\ldots,e_{i_1};\sigma^{-1}}(e_{i_{\tau(d)}}^\ast  \cdots   e_{i_{\tau(1)}}^\ast)(e_{i_{\sigma^{-1}(d)}}  \cdots   e_{i_{\sigma^{-1}(1)}})\otimes \sigma\\[3pt]
&=\sum_{\sigma\in \mathfrak{S}_d}\varepsilon_{e_{i_d},\ldots,e_{i_1};\sigma^{-1}}\delta_{i_{\tau(d)}i_{\sigma^{-1}(d)}}\cdots \delta_{i_{\tau(1)}i_{\sigma^{-1}(1)}} \sigma\\[3pt]
&=\sum_{\sigma\in \mathfrak{S}_d}\varepsilon_{e_{i_d},\ldots,e_{i_1};\sigma^{-1}}\delta_{i_{\tau(d)}i_{(\tau\sigma)^{-1}\tau(d)}}\cdots \delta_{i_{\tau(1)}i_{(\tau\sigma)^{-1}\tau(1)}} \sigma\\[3pt]
&=\sum_{(\tau\sigma)^{-1}\in (\mathfrak{S}_d)_{I.\tau}}\varepsilon_{e_{i_d},\ldots,e_{i_1};\sigma^{-1}}\sigma,
\end{align*}where the fifth equality follows from Lemma \ref{Lemma 1}. Therefore \begin{align*}
\lefteqn{L(e_{i_{\tau(d)}})\cdots L(e_{i_{\tau(1)}})L(e_{i_{\tau(1)}}^\ast)\cdots L(e_{i_{\tau(d)}}^\ast)(e_{i_d}   \cdots  e_{i_1})}\\[3pt]
&=\sum_{(\tau\sigma)^{-1}\in (\mathfrak{S}_d)_{I.\tau}}\varepsilon_{e_{i_d},\ldots,e_{i_1};\sigma^{-1}}(e_{i_{\tau(d)}} \cdots  e_{i_{\tau(1)}}).\sigma\\[3pt]
&=\sum_{(\tau\sigma)^{-1}\in (\mathfrak{S}_d)_{I.\tau}}\varepsilon_{e_{i_d},\ldots,e_{i_1};\sigma^{-1}}(e_{i_{\sigma^{-1}(d)}} \cdots  e_{i_{\sigma^{-1}(1)}}).\sigma\\[3pt]
&=\sum_{(\tau\sigma)^{-1}\in (\mathfrak{S}_d)_{I.\tau}}((e_{i_{d}} \cdots  e_{i_{1}}).\sigma^{-1}).\sigma\\[3pt]
&=|(\mathfrak{S}_d)_{I.\tau}|e_{i_d}  \cdots  e_{i_1}\\[3pt]
&=|(\mathfrak{S}_d)_I|e_{i_d}  \cdots  e_{i_1}\\[3pt]
&=\mathbf{m}(I)!e_{i_d}  \cdots  e_{i_1},
\end{align*}
where the fifth equality follows by an isomorphism
$(\mathfrak{S}_d)_{I.\tau} \to (\mathfrak{S}_d)_I, \sigma \mapsto \tau\sigma\tau^{-1}$.
\end{proof}

\begin{cor}For any $I=(i_d,\ldots,i_1),J=(j_d,\ldots,j_1)\in [m+n]^d$, we have  \[L(e_{i_{d}})\cdots L(e_{i_{1}})L(e_{i_{1}}^\ast)\cdots L(e_{i_{d}}^\ast)(e_{j_d} \cdots e_{j_1})=\begin{cases}\mathbf{m}(I)!e_{i_d}  \cdots  e_{i_1}&\text{if }\mathbf{m}(I)= \mathbf{m}(J),\\[3pt]0&\text{if }\mathbf{m}(I)\neq \mathbf{m}(J).\end{cases}\]\end{cor}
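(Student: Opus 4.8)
The plan is to reduce the corollary to Proposition~\ref{New expression} by relating an arbitrary basis vector $e_{j_d}\cdots e_{j_1}$ to a permuted copy of $e_{i_d}\cdots e_{i_1}$ whenever the two multi-indices have the same content. First I would observe that $\mathbf{m}(I)=\mathbf{m}(J)$ is equivalent to the existence of a permutation $\tau\in\mathfrak{S}_d$ with $J=I.\tau$, i.e. $j_k=i_{\tau(k)}$ for all $k$; this is the standard fact that $\mathfrak{S}_d$ acts transitively on rearrangements of a fixed multiset, and the stabilizer of $I$ has order $\mathbf{m}(I)!$. Fixing such a $\tau$, equation~\eqref{Action of permutation} gives $e_{j_d}\cdots e_{j_1}=e_{i_{\tau(d)}}\cdots e_{i_{\tau(1)}}=\varepsilon^{-1}\,(e_{i_d}\cdots e_{i_1}).\tau$ for a suitable sign $\varepsilon=\varepsilon_{e_{i_d},\ldots,e_{i_1};\tau}$; equivalently the operator $L(s_?)\cdots$ coming from $\tau$ sends $e_{i_d}\cdots e_{i_1}$ to $\pm e_{j_d}\cdots e_{j_1}$, but it is cleaner to work directly with the known action formula rather than reintroducing $L(\tau)$ at this point.

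Next I would compute $L(e_{i_d})\cdots L(e_{i_1})L(e_{i_1}^\ast)\cdots L(e_{i_d}^\ast)$ applied to $e_{j_d}\cdots e_{j_1}$ in the case $\mathbf{m}(I)=\mathbf{m}(J)$. Running the same telescoping computation as in the proof of Proposition~\ref{New expression}, but now starting from $e_{j_d}\cdots e_{j_1}$ instead of $e_{i_d}\cdots e_{i_1}$, the innermost pairing produces $\prod_k\delta_{i_k\, i_{\sigma^{-1}(? )}}$-type Kronecker factors that, after the change of summation variable, amount to summing $\varepsilon$-signs over $\{\sigma:\ \sigma$ stabilizes the content pattern aligning $J$ to $I\}$; because $\mathbf{m}(I)=\mathbf{m}(J)$ this index set is nonempty and is a coset of $(\mathfrak{S}_d)_I$, and the residual action on the tensor factor collapses each term to $e_{i_d}\cdots e_{i_1}$ with sign $+1$, exactly as in the final chain of equalities in the previous proof. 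Hence the output is $|(\mathfrak{S}_d)_I|\,e_{i_d}\cdots e_{i_1}=\mathbf{m}(I)!\,e_{i_d}\cdots e_{i_1}$. In fact the slickest route is: apply $L(e_{i_d}^\ast)\cdots L(e_{i_1}^\ast)$ first; this operator only ``sees'' the content of its input, and one checks $L(e_{i_1}^\ast)\cdots L(e_{i_d}^\ast)(e_{j_d}\cdots e_{j_1})$ depends on $J$ only through $\mathbf{m}(J)$ (up to the overall sign matching), so it equals $L(e_{i_1}^\ast)\cdots L(e_{i_d}^\ast)(e_{i_d}\cdots e_{i_1})$ after absorbing the sign $\varepsilon$, and then Proposition~\ref{New expression} with $\tau=\mathrm{id}$ finishes the computation.

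For the case $\mathbf{m}(I)\neq\mathbf{m}(J)$, I would argue that some letter value appears in $I$ a different number of times than in $J$; pick an index value $c$ with $\mathbf{m}_c(I)\neq\mathbf{m}_c(J)$. Each application of $L(e_{i_r}^\ast)$ in the string $L(e_{i_1}^\ast)\cdots L(e_{i_d}^\ast)$ contracts exactly one tensor slot against $e_{i_r}^\ast$, so after applying all $d$ of them the nonzero terms require the multiset $\{i_1,\ldots,i_d\}$ to be exhausted by contractions against the slots of $e_{j_d}\cdots e_{j_1}$; since the multisets differ, at some stage every surviving term contains a factor $\inner{e_{i_r}^\ast, e_{j_s}}=\delta_{i_r j_s}$ that must vanish — more precisely, the pigeonhole argument shows one cannot biject the $i$-slots with the $j$-slots preserving values, so the entire expression is $0$ already after the covector half, and hence after the full composition. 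The main obstacle, and the one place to be careful, is bookkeeping the Koszul signs $\varepsilon_{\ldots;\sigma}$ through the telescoping so that they really do cancel to give $+1$ on the diagonal terms; I expect this to go through verbatim as in Proposition~\ref{New expression} because the only new ingredient is the relabelling $J=I.\tau$, whose sign is inverted when one undoes it at the end, but it warrants an explicit remark rather than being left to the reader.
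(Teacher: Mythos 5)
Your underlying strategy --- relate $J$ to $I$ by a permutation $\tau$, use $|(\mathfrak{S}_d)_I|=\mathbf{m}(I)!$, and invoke Proposition~\ref{New expression} --- is the natural route, and since the paper gives the Corollary without a separate proof it is surely what the authors intend. Your treatment of the vanishing case is also correct: if $\mathbf{m}(I)\neq\mathbf{m}(J)$ then no $\sigma\in\mathfrak{S}_d$ satisfies the contraction condition $J.\sigma^{-1}=I$, so the whole sum over $\mathfrak{S}_d$ dies at the Kronecker-delta stage.

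There is, however, a concrete error in the non-vanishing case which you reproduce rather than catch. If you actually run the telescoping of Proposition~\ref{New expression} on the argument $e_{j_d}\cdots e_{j_1}$, each surviving term collapses back to the \emph{input} pure tensor $e_{j_d}\cdots e_{j_1}$, not to $e_{i_d}\cdots e_{i_1}$; in the Proposition the output is always the argument, with the $\tau$-labels living only in the operator. The clean reduction is to apply Proposition~\ref{New expression} with $I$ replaced by $J$ and $\tau$ chosen so that $j_{\tau(k)}=i_k$ for all $k$, which yields $L(e_{i_d})\cdots L(e_{i_1})L(e_{i_1}^\ast)\cdots L(e_{i_d}^\ast)(e_{j_d}\cdots e_{j_1})=\mathbf{m}(J)!\,e_{j_d}\cdots e_{j_1}=\mathbf{m}(I)!\,e_{j_d}\cdots e_{j_1}$. (A check with $m=2$, $n=0$, $d=2$, $I=(1,2)$, $J=(2,1)$ gives $e_2 e_1$, not $e_1 e_2$.) So the printed Corollary carries a typo in the output vector that you should have flagged instead of echoing. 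Your ``slickest route'' is also unsound as stated: $L(e_{i_1}^\ast)\cdots L(e_{i_d}^\ast)(e_{j_d}\cdots e_{j_1})$ and $L(e_{i_1}^\ast)\cdots L(e_{i_d}^\ast)(e_{i_d}\cdots e_{i_1})$ are elements of $\mathbb{C}\mathfrak{S}_d$ supported on different cosets of the stabilizer, namely $(\mathfrak{S}_d)_I\tau$ and $(\mathfrak{S}_d)_I$, so they do not agree up to an overall sign $\varepsilon$; the coset dependence only disappears after composing with $L(e_{i_d})\cdots L(e_{i_1})$ and identifying $V^{\otimes d}\otimes_{\mathbb{C}\mathfrak{S}_d}\mathbb{C}\mathfrak{S}_d$ with $V^{\otimes d}$, at which point it reappears precisely as the $e_{j_d}\cdots e_{j_1}$ versus $e_{i_d}\cdots e_{i_1}$ discrepancy.
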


We provide two more interesting formulas of such type which will be used in the sequel.

\begin{prop}For any $\varphi\in \overline{T}_d(V)$, we have \begin{equation}\sum_{i=1}^{m+n}L(e_{i})L(e_{i}^\ast)(\varphi)=d \varphi.\end{equation}\end{prop}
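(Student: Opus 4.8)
The plan is to reduce the claimed identity to the formula $\rho_d(E_{ij})=L(e_i)L(e_j^\ast)$ established in \eqref{Eij}, and then compute the sum $\sum_i\rho_d(E_{ii})$ directly on pure tensors. Since $\overline{T}_d(V)=V^{\otimes d}\otimes_{\mathbb{C}\mathfrak{S}_d}\mathbb{C}\mathfrak{S}_\infty$ is spanned by elements of the form $v_d\cdots v_1\otimes\sigma$, and since both $L(e_i)$ and $L(e_i^\ast)$ leave the $\mathfrak{S}_\infty$-component essentially untouched in the way recorded in the derivation preceding \eqref{Eij} (the permutation factors $s_{d-1}\cdots s_k$ introduced by $L(e_i^\ast)$ are cancelled by the ones introduced by $L(e_i)$), it suffices to prove the identity on pure tensors $v_d\cdots v_1\in V^{\otimes d}$, i.e. to show $\sum_{i=1}^{m+n}\rho_d(E_{ii})(v_d\cdots v_1)=d\,v_d\cdots v_1$.

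First I would invoke \eqref{Eij} to rewrite $\sum_{i=1}^{m+n}L(e_i)L(e_i^\ast)$ acting on the $V^{\otimes d}$ part as $\rho_d\big(\sum_{i=1}^{m+n}E_{ii}\big)$. The element $E:=\sum_{i=1}^{m+n}E_{ii}$ is the identity matrix, which acts on $V$ as $E\cdot v=v$ for every $v\in V$, and it is even, so $|E|=\overline{0}$. Plugging $g=E$ into the definition of $\rho_d$, all the sign factors $(-1)^{|E|(|v_d|+\cdots+|v_{k+1}|)}$ equal $1$, and $E v_k=v_k$, so
\begin{equation*}
\rho_d(E)(v_d\cdots v_1)=\sum_{k=1}^{d}v_d\cdots v_{k+1}\otimes(E v_k)\otimes v_{k-1}\cdots v_1=\sum_{k=1}^{d}v_d\cdots v_1=d\,v_d\cdots v_1.
\end{equation*}
This is exactly the desired identity on $V^{\otimes d}$.

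To finish, I would promote this from $V^{\otimes d}$ to $\overline{T}_d(V)$. The cleanest way is to observe, as in the computation leading to \eqref{Eij}, that for any $\sigma\in\mathfrak{S}_\infty$ one has $L(e_i)L(e_i^\ast)(v_d\cdots v_1\otimes\sigma)=\big(\rho_d(E_{ii})(v_d\cdots v_1)\big)\otimes\sigma$, because the permutation $s_{d-1}s_{d-2}\cdots s_k$ produced by applying $L(e_i^\ast)$ is exactly undone when $L(e_i)$ reinserts $e_i$ into the $k$-th slot. Summing over $i$ and using the previous paragraph gives $\sum_{i=1}^{m+n}L(e_i)L(e_i^\ast)(v_d\cdots v_1\otimes\sigma)=d\,(v_d\cdots v_1\otimes\sigma)$, and since such elements span $\overline{T}_d(V)$ and $\varphi$ was arbitrary in $\overline{T}_d(V)$, the proposition follows by linearity.

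I do not anticipate a genuine obstacle here: the only point requiring a little care is the bookkeeping of the permutation factors when passing from $V^{\otimes d}$ to $\overline{T}_d(V)$, namely checking that the $\mathfrak{S}_\infty$-tail $\sigma$ really does come out unchanged in $L(e_i)L(e_i^\ast)(v_d\cdots v_1\otimes\sigma)$; but this is precisely the content of the displayed chain of equalities establishing \eqref{Eij}, so it can simply be cited. Everything else is the one-line evaluation of $\rho_d$ on the identity matrix.
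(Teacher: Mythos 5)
Your proposal is correct and follows essentially the same route as the paper: both reduce the claim to the identity $L(e_i)L(e_j^\ast)=\rho_d(E_{ij})$ established in \eqref{Eij} (whose derivation already shows that the $\mathfrak{S}_\infty$-tail $\sigma$ passes through unchanged), then evaluate $\rho_d\big(\sum_i E_{ii}\big)=\rho_d(\mathrm{id}_V)=d\cdot\mathrm{id}$ on spanning elements $e_{j_d}\cdots e_{j_1}\otimes\sigma$ and conclude by linearity. Your writeup is slightly more explicit about why the identity matrix is even and why $\rho_d(\mathrm{id}_V)$ acts as multiplication by $d$, but there is no substantive difference from the paper's argument.
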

\begin{proof}For any $(j_d,\ldots,j_1)\in [m+n]^d$ and $\sigma\in \mathfrak{S}_\infty$, we have\begin{align*}
\sum_{i=1}^{m+n}L(e_{i})L(e_{i}^\ast)(e_{j_d}  \cdots  e_{j_1}\otimes \sigma)&=\sum_{i=1}^{m+n}\big(\rho_d(E_{ii})(e_{j_d}  \cdots  e_{j_1})\big)\otimes \sigma\\[3pt]
&=\rho_d\Bigg(\sum_{i=1}^{m+n}E_{ii}\Bigg)(e_{j_d}  \cdots  e_{j_1})\otimes \sigma\\[3pt]
&=d(e_{j_d}  \cdots  e_{j_1}\otimes \sigma).
\end{align*}Since each $\varphi\in \overline{T}_d(V)$ is a linear combination of $e_{j_d} \cdots  e_{j_1}\otimes \sigma$'s, we get the result.\end{proof}

We define the Euler operator
\begin{equation}
  A_d=\frac{1}{d!}\sum_{(i_d,\ldots,i_1)\in [m+n]^d}L(e_{i_{d}})\cdots L(e_{i_{1}})L(e_{i_{1}}^\ast)\cdots L(e_{i_{d}}^\ast).
\end{equation}

\begin{prop}\label{Euler operator}For any $\varphi\in \overline{T}_d(V)$, we have \begin{equation}A_d(\varphi)=\varphi.\end{equation}\end{prop}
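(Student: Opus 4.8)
The plan is to reduce the statement $A_d(\varphi)=\varphi$ to the earlier Proposition \ref{New expression} by grouping the sum defining $A_d$ according to the ``type'' $\mathbf{m}(I)$ of the index tuple. Since every $\varphi\in\overline{T}_d(V)$ is a linear combination of elements $e_{j_d}\cdots e_{j_1}\otimes\sigma$, and since each $L(e_i)L(e_i^\ast)$ and hence $A_d$ acts only on the tensor factor (commuting past the $\otimes\sigma$, as the displayed computation for $\rho_d(E_{ij})$ shows), it suffices to prove $A_d(e_{j_d}\cdots e_{j_1})=e_{j_d}\cdots e_{j_1}$ for a fixed basis tensor $e_J:=e_{j_d}\cdots e_{j_1}$ with $J=(j_d,\ldots,j_1)$.

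First I would invoke the Corollary just stated: for $I,J\in[m+n]^d$,
\[
L(e_{i_{d}})\cdots L(e_{i_{1}})L(e_{i_{1}}^\ast)\cdots L(e_{i_{d}}^\ast)(e_J)
=\begin{cases}\mathbf{m}(I)!\,e_{i_d}\cdots e_{i_1},&\mathbf{m}(I)=\mathbf{m}(J),\\[3pt]0,&\text{otherwise.}\end{cases}
\]
Hence in the sum defining $d!\,A_d(e_J)$ only the tuples $I$ with $\mathbf{m}(I)=\mathbf{m}(J)$ contribute. These are exactly the tuples of the form $I=J.\tau^{-1}$ (equivalently $I.\tau=J$) for $\tau$ ranging over a set of coset representatives of $(\mathfrak{S}_d)_J\backslash\mathfrak{S}_d$; more efficiently, I would write each such $I$ as $I=(j_{\tau(d)},\ldots,j_{\tau(1)})$ with $\tau\in\mathfrak{S}_d$, so that as $\tau$ runs over all of $\mathfrak{S}_d$ each such $I$ is hit exactly $|(\mathfrak{S}_d)_J|=\mathbf{m}(J)!$ times. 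Reindexing the sum over such $I$ by $\tau\in\mathfrak{S}_d$ and applying Proposition \ref{New expression} (with the roles of $I$ and the permuted tuple matched up, noting $\mathbf{m}(I)!=\mathbf{m}(J)!$) gives
\[
d!\,A_d(e_J)=\sum_{\substack{I\in[m+n]^d\\ \mathbf{m}(I)=\mathbf{m}(J)}}\mathbf{m}(I)!\,e_{i_d}\cdots e_{i_1},
\]
and I must now evaluate the right-hand side.

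The one genuinely non-routine point is the bookkeeping in this last sum: I need that $\sum_{I:\,\mathbf{m}(I)=\mathbf{m}(J)}\mathbf{m}(I)!\,e_{i_d}\cdots e_{i_1}=d!\,e_J$. Here one uses that permuting the entries of $J$ and then reordering back produces a sign, but in fact the cleanest route is to observe directly from Proposition \ref{New expression} applied with $\tau$ that, summing over \emph{all} $\tau\in\mathfrak{S}_d$ (not just coset representatives), one gets $\sum_{\tau\in\mathfrak{S}_d}L(e_{j_{\tau(d)}})\cdots L(e_{j_{\tau(1)}})L(e_{j_{\tau(1)}}^\ast)\cdots L(e_{j_{\tau(d)}}^\ast)(e_J)=\sum_{\tau\in\mathfrak{S}_d}\mathbf{m}(J)!\,e_J=d!\,\mathbf{m}(J)!\,e_J$, while on the other hand this same sum equals $\mathbf{m}(J)!$ times $\sum_{I:\,\mathbf{m}(I)=\mathbf{m}(J)}L(e_{i_d})\cdots L(e_{i_1})L(e_{i_1}^\ast)\cdots L(e_{i_d}^\ast)(e_J)$ because each admissible $I$ arises from exactly $\mathbf{m}(J)!$ permutations $\tau$ and the summand depends only on $I$ (by the Corollary, the summand for an $I$ with $\mathbf{m}(I)=\mathbf{m}(J)$ is the well-defined expression $\mathbf{m}(I)!\,e_{i_d}\cdots e_{i_1}$). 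Cancelling $\mathbf{m}(J)!$ yields $\sum_{I:\,\mathbf{m}(I)=\mathbf{m}(J)}L(e_{i_d})\cdots L(e_{i_1})L(e_{i_1}^\ast)\cdots L(e_{i_d}^\ast)(e_J)=d!\,e_J$, which by the Corollary is precisely $d!\,A_d(e_J)$, and since the $L$-operators vanish on $e_J$ for all $I$ with $\mathbf{m}(I)\neq\mathbf{m}(J)$, extending the sum to all of $[m+n]^d$ changes nothing. Dividing by $d!$ gives $A_d(e_J)=e_J$, and linearity in $\varphi$ (together with the fact that $A_d$ commutes with the $\otimes\sigma$ slot) completes the proof. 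The main obstacle is thus organizing the double-counting argument so that the sign factors $\varepsilon$ never have to be examined individually — they are absorbed once and for all inside Proposition \ref{New expression}.
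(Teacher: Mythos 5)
Your argument is correct, but it follows a genuinely different route from the paper's. The paper never touches Proposition~\ref{New expression} or its Corollary here; instead it uses the trace identity $\sum_{i=1}^{m+n}L(e_i)L(e_i^\ast)=d\cdot\mathrm{id}$ on $\overline{T}_d(V)$ (the proposition stated immediately before) and telescopes: after applying $L(e_{i_2}^\ast)\cdots L(e_{i_d}^\ast)$, the element lies in $\overline{T}_1(V)$, so the inner sum over $i_1$ contributes a factor $1$; one layer up the element lies in $\overline{T}_2(V)$ and the sum over $i_2$ contributes $2$; and so on, producing $\frac{1}{d!}\cdot 1\cdot 2\cdots d\cdot\varphi=\varphi$. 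That argument is shorter, avoids any combinatorics of tuples, and is independent of Proposition~\ref{New expression}. Your proof instead reduces to basis tensors $e_J$ and runs an orbit--stabilizer double count through Proposition~\ref{New expression}: summing $L(e_{j_{\tau(d)}})\cdots L(e_{j_{\tau(d)}}^\ast)(e_J)=\mathbf{m}(J)!\,e_J$ over all $\tau\in\mathfrak{S}_d$ gives $d!\,\mathbf{m}(J)!\,e_J$, each $I$ with $\mathbf{m}(I)=\mathbf{m}(J)$ is hit exactly $|(\mathfrak{S}_d)_J|=\mathbf{m}(J)!$ times, and the Corollary kills the $I$'s with $\mathbf{m}(I)\neq\mathbf{m}(J)$. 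Cancelling $\mathbf{m}(J)!$ yields $d!\,A_d(e_J)=d!\,e_J$, and right $\mathfrak{S}_\infty$-equivariance (Proposition~\ref{Commutant relation}) extends this from $e_J$ to all of $\overline{T}_d(V)$. Both routes are valid; yours makes the combinatorial mechanism behind $A_d=\mathrm{id}$ explicit, at the cost of relying on the heavier Proposition~\ref{New expression}.

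One detail worth flagging: your displayed intermediate step
$d!\,A_d(e_J)=\sum_{\mathbf{m}(I)=\mathbf{m}(J)}\mathbf{m}(I)!\,e_{i_d}\cdots e_{i_1}$
is not correct as literally written, because for $I\neq J$ with $\mathbf{m}(I)=\mathbf{m}(J)$ one has $e_I\neq e_J$ in $\overline{T}_d(V)$ (rather $e_I\otimes 1=\pm\,e_J\otimes\sigma$ for a suitable $\sigma\in\mathfrak{S}_d$). You inherit this from the Corollary in the paper, whose right-hand side should read $\mathbf{m}(I)!\,e_{j_d}\cdots e_{j_1}$ (equivalently $\mathbf{m}(J)!\,e_J$), not $\mathbf{m}(I)!\,e_{i_d}\cdots e_{i_1}$: indeed Proposition~\ref{New expression} applied to $J$ returns $\mathbf{m}(J)!\,e_J$, and already for $m=2$, $n=0$, $d=2$, $I=(2,1)$, $J=(1,2)$ a direct computation gives $L(e_2)L(e_1)L(e_1^\ast)L(e_2^\ast)(e_1e_2)=e_1e_2$, not $e_2e_1$. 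Your double-counting paragraph, however, works only with the operator $L(e_{i_d})\cdots L(e_{i_d}^\ast)(e_J)$ (which depends only on $I$) and with Proposition~\ref{New expression} itself, so it cleanly sidesteps the misstated value and the conclusion is correct. You might simply replace the displayed equation above with $d!\,A_d(e_J)=\sum_{\mathbf{m}(I)=\mathbf{m}(J)}\mathbf{m}(J)!\,e_J$ and observe that the number of such $I$ is $d!/\mathbf{m}(J)!$, which finishes the proof in one line without the double count.
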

\begin{proof}We have \begin{align*}
A_d(\varphi)&=\frac{1}{d!}\sum_{(i_d,\ldots,i_1)\in [m+n]^d}L(e_{i_{d}})\cdots L(e_{i_{1}})L(e_{i_{1}}^\ast)\cdots L(e_{i_{d}}^\ast)(\varphi)\\[3pt]
&=\frac{1}{d!}\sum_{(i_d,\ldots,i_2)\in [m+n]^{d-1}}L(e_{i_{d}})\cdots L(e_{i_{2}})\Bigg(\sum_{i_1=1}^{m+n} L(e_{i_{1}})L(e_{i_{1}}^\ast)\Bigg)\big(L(e_{i_{2}}^\ast)\cdots L(e_{i_{d}}^\ast)(\varphi)\big)\\[3pt]
&=\frac{1}{d!}\sum_{(i_d,\ldots,i_2)\in [m+n]^{d-1}}L(e_{i_{d}})\cdots L(e_{i_{2}})\big(L(e_{i_{2}}^\ast)\cdots L(e_{i_{d}}^\ast)(\varphi)\big)\\[3pt]
&=\frac{1}{d!}\sum_{(i_d,\ldots,i_3)\in [m+n]^{d-2}}L(e_{i_{d}})\cdots L(e_{i_{3}})\Bigg(\sum_{i_2=1}^{m+n} L(e_{i_{2}})L(e_{i_{2}}^\ast)\Bigg)\big(L(e_{i_{3}}^\ast)\cdots L(e_{i_{d}}^\ast)(\varphi)\big)\\[3pt]
&=\frac{1}{d!}\sum_{(i_d,\ldots,i_3)\in [m+n]^{d-2}}L(e_{i_{d}})\cdots L(e_{i_{3}})\big(2 L(e_{i_{3}}^\ast)\cdots L(e_{i_{d}}^\ast)(\varphi)\big)\\[3pt]
&=\cdots\\[3pt]
&=\frac{1}{d!}d!\varphi\\[3pt]
&=\varphi.
\end{align*}\end{proof}

\section{Generalizations of the Schur-Sergeev duality}\label{section:super-duality}
Let $W$ be a vector space and $A$ a subset of $\mathrm{End}_\mathbb{C}(W)$.
The commutant of $A$ is denoted by $\mathrm{End}_A(W)$.
That is $$\mathrm{End}_A(W)=\{f\in \mathrm{End}_\mathbb{C}(W)~|~fg=gf \text{ for all }g\in A\}.$$

For any $d\geq 1$ and $0\leq k\leq d$,
we consider $\mathrm{End}_\mathbb{C}(V^{\otimes d}\otimes _{\mathbb{C}\mathfrak{S}_d}\mathbb{C}\mathfrak{S}_{d+k})$.
Denote by $\mathcal{A}_d^k$ the subspace of $\mathrm{End}_\mathbb{C}(V^{\otimes d}\otimes _{\mathbb{C}\mathfrak{S}_d}\mathbb{C}\mathfrak{S}_{d+k})$
spanned by operators of the form $$L(w_d)\cdots L(w_1)L(\sigma)L(v_1^\ast)\cdots L(v_d^\ast),$$
with $w_1,\ldots,w_d\in V$, $\sigma\in \mathfrak{S}_{d+k}$, and $v_1^\ast,\ldots,v_k^\ast\in V^\ast$.
The representation $\pi_{d+k}$ of $\mathbb{C}\mathfrak{S}_{d+k}$ on $V^{\otimes d}\otimes _{\mathbb{C}\mathfrak{S}_d}\mathbb{C}\mathfrak{S}_{d+k}$ is given by right multiplication
and we identify $\pi_{d+k}(\mathbb{C}\mathfrak{S}_{d+k})$ with $\mathbb{C}\mathfrak{S}_{d+k}$.

\begin{thm}\label{super commutant theorem}
  We have
  \begin{equation*}
    \mathcal{A}_d^k =\mathrm{End}_{\mathbb{C}\mathfrak{S}_{d+k}}(V^{\otimes d}\otimes _{\mathbb{C}\mathfrak{S}_d}\mathbb{C}\mathfrak{S}_{d+k})\end{equation*} and 
\begin{equation*} \mathrm{End}_{\mathcal{A}_d^k}(V^{\otimes d}\otimes _{\mathbb{C}\mathfrak{S}_d}\mathbb{C}\mathfrak{S}_{d+k})=\mathbb{C}\mathfrak{S}_{d+k}.
  \end{equation*}
\end{thm}
\begin{proof}Apparently, by Proposition \ref{Commutant relation}, we have
  $\mathcal{A}_d^k\subset \mathrm{End}_{\mathbb{C}\mathfrak{S}_{d+k}}(V^{\otimes d}\otimes _{\mathbb{C}\mathfrak{S}_d}\mathbb{C}\mathfrak{S}_{d+k})$.
  Now for any $f\in \mathrm{End}_{\mathbb{C}\mathfrak{S}_{d+k}}(V^{\otimes d}\otimes _{\mathbb{C}\mathfrak{S}_d}\mathbb{C}\mathfrak{S}_{d+k})$ and $\varphi\in V^{\otimes d}\otimes _{\mathbb{C}\mathfrak{S}_d}\mathbb{C}\mathfrak{S}_{d+k}$, it follows from Proposition \ref{Euler operator} that \begin{align*}
f(\varphi)&=f(A_d(\varphi))\\
&=f\Bigg(\frac{1}{d!}\sum_{(i_d,\ldots,i_1)\in [m+n]^d}L(e_{i_{d}})\cdots L(e_{i_{1}})L(e_{i_{1}}^\ast)\cdots L(e_{i_{d}}^\ast)(\varphi)\Bigg)\\[3pt]
&=f\Bigg(\frac{1}{d!}\sum_{(i_d,\ldots,i_1)\in [m+n]^d}e_{i_{d}}  \cdots   e_{i_{1}}\otimes L(e_{i_{1}}^\ast)\cdots L(e_{i_{d}}^\ast)(\varphi)\Bigg)\\[3pt]
&=f\Bigg(\frac{1}{d!}\sum_{(i_d,\ldots,i_1)\in [m+n]^d}\pi_{d+k}\big(L(e_{i_{1}}^\ast)\cdots L(e_{i_{d}}^\ast)(\varphi)\big)(e_{i_{d}} \cdots   e_{i_{1}})\Bigg)\\[3pt]
&=\frac{1}{d!}\sum_{(i_d,\ldots,i_1)\in [m+n]^d}\pi_{d+k}\big(L(e_{i_{1}}^\ast)\cdots L(e_{i_{d}}^\ast)(\varphi)\big)f(e_{i_{d}} \cdots  e_{i_{1}})\\[3pt]
&=\frac{1}{d!}\sum_{(i_d,\ldots,i_1)\in [m+n]^d}f(e_{i_{d}}  \cdots   e_{i_{1}})L(e_{i_{1}}^\ast)\cdots L(e_{i_{d}}^\ast)(\varphi).
\end{align*}Write $f(e_{i_d} \cdots  e_{i_1})=\sum_{J=(j_d,\ldots,j_1)\in [m+n]^d}e_{j_d} \cdots   e_{j_1}\otimes t_J$ with $t_J\in \mathbb{C}\mathfrak{S}_{d+k}$. Then \begin{align*}
\lefteqn{f(e_{i_{d}}  \cdots   e_{i_{1}})L(e_{i_{1}}^\ast)\cdots L(e_{i_{d}}^\ast)(\varphi)}\\[3pt]
&=\sum_{J=(j_d,\ldots,j_1)\in [m+n]^d}L(e_{j_d})\cdots L( e_{j_1})L( t_J)L(e_{i_1}^\ast)\cdots L(e_{i_d}^\ast)(\varphi).
\end{align*}Hence $f\in \mathcal{A}_d^k$. As a consequence, $\mathrm{End}_{\mathbb{C}\mathfrak{S}_{d+k}}(V^{\otimes d}\otimes _{\mathbb{C}\mathfrak{S}_d}\mathbb{C}\mathfrak{S}_{d+k})\subset\mathcal{A}_d^k$.

Since $\mathbb{C}\mathfrak{S}_{d+k}$ is a semisimple algebra, it follows from the double commutant theorem that $\mathrm{End}_{\mathcal{A}_d^k}(V^{\otimes d}\otimes _{\mathbb{C}\mathfrak{S}_d}\mathbb{C}\mathfrak{S}_{d+k})=\mathbb{C}\mathfrak{S}_{d+k}$.\end{proof}

As an application of the above theorem, we provide a new proof of the Schur-Sergeev duality. Firstly, we need the following technical lemma.

\begin{lem}For any $i,j,k\in [m+n]$, we have \begin{equation*}L(e_i)L(e_j)L(e_k^\ast)=(-1)^{|e_i|(|e_j|+|e_k^\ast|)}\big(L(e_j)L(e_k^\ast)L(e_i)-\delta_{ik}L(e_j)\big),\end{equation*}on $\overline{T}_d(V)$.\end{lem}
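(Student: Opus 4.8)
The plan is to deduce this identity formally from the commutation relations collected in Proposition~\ref{Relation 2}, without ever unwinding the action on explicit tensors; the only genuine work is bookkeeping of the $\mathbb{Z}_2$-signs. All the relations in Proposition~\ref{Relation 2} are identities of operators on $\overline{T}(V)$, hence in particular on $\overline{T}_d$, so I may compose them freely on either side.

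First I would rename the index $j$ to $k$ in the third relation of Proposition~\ref{Relation 2}, getting
$$L(e_k^\ast)L(e_i)=(-1)^{|e_k|\cdot|e_i|}L(e_i)L(s_1)L(e_k^\ast)+\langle e_k^\ast,e_i\rangle,$$
and note $\langle e_k^\ast,e_i\rangle=\delta_{ik}$ as a scalar operator. Composing on the left with $L(e_j)$ gives
$$L(e_j)L(e_k^\ast)L(e_i)=(-1)^{|e_k|\cdot|e_i|}L(e_j)L(e_i)L(s_1)L(e_k^\ast)+\delta_{ik}L(e_j).$$
Substituting this into the right-hand side of the claimed identity, the term $(-1)^{|e_i|(|e_j|+|e_k^\ast|)}\delta_{ik}L(e_j)$ cancels exactly against the $-\delta_{ik}L(e_j)$ already present. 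Since $|e_k^\ast|=|e_k|$, the remaining sign is $(-1)^{|e_i|(|e_j|+|e_k^\ast|)}(-1)^{|e_k|\cdot|e_i|}=(-1)^{|e_i|\cdot|e_j|}(-1)^{2|e_i|\cdot|e_k|}=(-1)^{|e_i|\cdot|e_j|}$, so the right-hand side collapses to $(-1)^{|e_i|\cdot|e_j|}L(e_j)L(e_i)L(s_1)L(e_k^\ast)$.

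It then remains only to observe that $L(e_i)L(e_j)L(e_k^\ast)=(-1)^{|e_i|\cdot|e_j|}L(e_j)L(e_i)L(s_1)L(e_k^\ast)$, which is precisely the first relation of Proposition~\ref{Relation 2}, namely $L(e_i)L(e_j)=(-1)^{|e_i|\cdot|e_j|}L(e_j)L(e_i)L(s_1)$, composed on the right with $L(e_k^\ast)$. There is essentially no obstacle here: the lemma is a direct algebraic corollary of Proposition~\ref{Relation 2}, and the only point requiring care is the arithmetic of exponents modulo $2$ (that $2|e_i|\cdot|e_k|$ is even and that the two $\delta_{ik}$-terms carry matching coefficients so that they cancel). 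If one prefers a self-contained verification, both sides can instead be evaluated on a generic element $v_d\otimes\cdots\otimes v_1\otimes\sigma\in\overline{T}_d$ using the defining formulas for $L(e_k^\ast)$, $L(e_j)$ and $L(e_i)$, but the route through Proposition~\ref{Relation 2} is shorter and cleaner.
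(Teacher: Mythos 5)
Your proof is correct, and it follows a genuinely different route from the paper's. The paper proves the lemma by explicit computation: it evaluates both sides on a generic basis element $e_{l_d}\cdots e_{l_1}\otimes\sigma$, rewrites $L(e_j)L(e_k^\ast)$ as $\rho_d(E_{jk})$, expands the super-derivation action of $E_{jk}$ on the tensor $e_i\,e_{l_d}\cdots e_{l_1}$, splits off the $p=d+1$ term (which produces the $\delta_{ik}$ contribution via $E_{jk}e_i=\delta_{ik}e_j$), and then pulls $L(e_i)$ back out in front. Your argument bypasses all of this: it composes the three relations of Proposition~\ref{Relation 2} — which the paper asserts without proof — and reduces the whole lemma to the cancellation of two $\delta_{ik}$-terms plus the observation $(-1)^{|e_i||e_k^\ast|}(-1)^{|e_i||e_k|}=1$. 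Both derivations are sound; the paper's is self-contained but longer, since it re-expands $\rho_d(E_{jk})$ term by term, whereas yours is shorter and makes the structure of the statement transparent (it is nothing but a rearrangement of the Clifford-type relation $L(e_k^\ast)L(e_i)=(-1)^{|e_k||e_i|}L(e_i)L(s_1)L(e_k^\ast)+\delta_{ik}$ combined with the quasi-commutation $L(e_i)L(e_j)=(-1)^{|e_i||e_j|}L(e_j)L(e_i)L(s_1)$). The trade-off is that you are relying on Proposition~\ref{Relation 2} as a black box, so the total effort is comparable if one also has to verify those relations — but given that the paper does state them as established, your route is a clean and valid shortcut.
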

\begin{proof}For any $(l_d,\ldots,l_1)\in [m+n]^d$ and $\sigma\in \mathfrak{S}_\infty$, we have  \begin{align*}
\lefteqn{(-1)^{|e_i|(|e_j|+|e_k^\ast|)}\big(L(e_j)L(e_k^\ast)L(e_i)-\delta_{ik}L(e_j)\big)(e_{l_d} \cdots  e_{l_1}\otimes \sigma)}\\[3pt]
&=(-1)^{|e_i|(|e_j|+|e_k^\ast|)}\big(\rho_d(E_{jk})(e_i  e_{l_d} \cdots  e_{l_1})\otimes \sigma-\delta_{ik}e_j  e_{l_d}  \cdots  e_{l_1}\otimes \sigma\big)\\[3pt]
&=(-1)^{|e_i|(|e_j|+|e_k^\ast|)}\big((E_{jk}e_i)e_{l_d} \cdots e_{l_1}\otimes \sigma-\delta_{ik}e_j  e_{l_d} \cdots  e_{l_1}\otimes \sigma\big)\\[3pt]
&\ \ \ \ +(-1)^{|e_i|(|e_j|+|e_k^\ast|)}\sum_{p=1}^d(-1)^{|E_{jk}|(|e_i|+|e_{l_d}|+\cdots+|e_{l_{p+1}}|)}e_i  e_{l_d} \cdots  (E_{jk}e_{l_p})  \cdots  e_{l_1}\otimes \sigma\\[3pt]
&=\sum_{p=1}^d(-1)^{|E_{jk}|(|e_{l_d}|+\cdots+|e_{l_{p+1}}|)}e_i  e_{l_d} \cdots  (E_{jk}e_{l_p})  \cdots  e_{l_1}\otimes \sigma\\[3pt]
&=L(e_i)\Bigg(\sum_{p=1}^d(-1)^{|E_{jk}|(|e_{l_d}|+\cdots+|e_{l_{p+1}}|)}e_{l_d} \cdots (E_{jk}e_{l_p})  \cdots  e_{l_1}\otimes \sigma\Bigg)\\[3pt]
&=L(e_i)(\rho_d(E_{jk})(e_{l_d} \cdots  e_{l_1})\otimes \sigma)
\\[3pt]
&=L(e_i)L(e_j)L(e_k^\ast)(e_{l_d} \cdots   e_{l_1}\otimes \sigma),
\end{align*}as desired.\end{proof}

\begin{cor}[Schur-Sergeev duality]\label{super duality theorem}
  For each $d\geq 1$, we have
  \begin{equation*}
    \rho_d(U(\mathfrak{gl}(m|n))) =\mathrm{End}_{\mathbb{C}\mathfrak{S}_{d}}(V^{\otimes d})\end{equation*}and\begin{equation*}
    \mathrm{End}_{\rho_d(U(\mathfrak{gl}(m|n)))}(V^{\otimes d}) =\mathbb{C}\mathfrak{S}_{d},
  \end{equation*}
  where $\rho_d$ denotes the representation of $U(\mathfrak{gl}(m|n))$ on $V^{\otimes d} \cong V^{\otimes d} \otimes_{\mathbb{C}\mathfrak{S}_{d}} \mathbb{C}\mathfrak{S}_{d}$.
\end{cor}
\begin{proof}We are going to show that $\mathcal{A}_d^0=\rho_d(U(\mathfrak{gl}(m|n)))$. We first notice that every element in $\mathcal{A}_d^0$ is a linear combination of operators of the form $$L(w_d)\cdots L(w_1)L(\sigma)L(v_1^\ast)\cdots L(v_d^\ast),$$with $w_1,\ldots,w_d\in V$, $\sigma\in \mathfrak{S}_{d}$, and $v_1^\ast,\ldots,v_k^\ast\in V^\ast$. By Proposition \ref{Relation 2}, the above operators can be rewritten in the form $$\eta L(w'_d)\cdots L(w'_1)L(v_1^\ast)\cdots L(v_d^\ast),$$ for some number $\eta$ and $w'_1,\ldots,w'_d\in V$. We identify $\varphi\in V^{\otimes d}\otimes _{\mathbb{C}\mathfrak{S}_d}\mathbb{C}\mathfrak{S}_{d}$ with $V^{\otimes d}$ in a natural way.

For any $\varphi\in V^{\otimes d}\otimes _{\mathbb{C}\mathfrak{S}_d}\mathbb{C}\mathfrak{S}_{d}$, by (\ref{Eij}), we have \begin{align*}
\rho_d(E_{ij})(\varphi)&=L(e_i)L(e_j^\ast)(\varphi)\\[3pt]
&=L(e_i)A_{d-1}L(e_j^\ast)(\varphi)\\[3pt]
&=\frac{1}{(d-1)!}\sum_{k_1,\ldots,k_{d-1}=1}^{m+n}L(e_i)L(e_{k_{d-1}})\cdots L(e_{k_1})L(e_{k_1}^\ast)\cdots L(e_{k_{d-1}}^\ast)L(e_j^\ast)(\varphi).
\end{align*}Therefore $\rho_d(E_{ij})\in \mathcal{A}_d^0$, and hence $\rho_d(U(\mathfrak{gl}(m|n)))\subset \mathcal{A}_d^0$.

On the other hand, for any $(i_p,\ldots,i_1), (j_p,\ldots,j_1)\in [m+n]^p$, we use induction on $p$ to show that $L(e_{i_p})\cdots L(e_{i_1})L(e_{j_1}^\ast)\cdots L(e_{j_p}^\ast)\in \rho_d(U(\mathfrak{gl}(m|n)))$. The case $p=1$ follows from (\ref{Eij}). Assume that the result holds for $p\geq 1$. By the above lemma, it is easy to see that\begin{align*}
\lefteqn{L(e_{i_{p+1}})\cdots L(e_{i_1})L(e_{j_1}^\ast)\cdots L(e_{j_{p+1}}^\ast)}\\[3pt]
&=\kappa L(e_{i_1})L(e_{j_1}^\ast) L(e_{i_{p+1}})\cdots L(e_{i_2})L(e_{j_2}^\ast)\cdots L(e_{j_{p+1}}^\ast)+\text{terms in }\mathcal{A}_{d-2}^0\\[3pt]
&=\kappa \rho_d(E_{i_1 j_1}) L(e_{i_{p+1}})\cdots L(e_{i_2})L(e_{j_2}^\ast)\cdots L(e_{j_{p+1}}^\ast)+\text{terms in }\mathcal{A}_{d-2}^0,
\end{align*}for some number $\kappa$. By the inductive hypothesis, we get the result.
\end{proof}

\section{Extensions through Hecke algebras}

\subsection{Hecke algebras}
Let $q$ be an indeterminate.
The Hecke algebra $\mathscr{H}_d$ of type $A_{d-1}$ is a $\mathbb{C}(q)$-algebra generated by $T_1, T_2,\ldots, T_{d-1}$ subject to
\begin{align*}
  &(T_i-q)(T_i+q^{-1})=0, \\
  &T_iT_{i+1}T_i=T_{i+1}T_iT_{i+1},\\
  &T_iT_j=T_jT_i,\qquad \mbox{for $|i-j|>1$}.
\end{align*}
For any reduced expression $s_{i_1}s_{i_2}\cdots s_{i_k}$ of $w \in\mathfrak{S}_d$,
we denote $T_w=T_{i_1}T_{i_2}\cdots T_{i_k}$. It is well-known that $T_w$ is independent of the choice of reduced expressions of $w$.

Let $\mathscr{H}_\infty$ be the inductive limit of the natural inclusions $\mathscr{H}_0\subset\mathscr{H}_1\subset\cdots$.
Given $k\geq 1$, we define an algebra homomorphism $ ^{\uparrow k}:\mathscr{H}_\infty\rightarrow \mathscr{H}_\infty$ by
$T_i^{\uparrow k}=T_{i+k}$.

Define a map $\gamma: [m+n] \times [m+n] \to \{-1, 1\}$ by
\[\gamma(i, j) =\left\{\begin{array}{ll}
    1 & \mbox{if $i > j$},\\
    -1 & \mbox{if $i \le j$}.
  \end{array}\right.\]

The following lemma will be used in the next section.
\begin{lem}\label{3t}
    For any $1 \le i, j, k \le m+n$, we have
$$ T_1^{\gamma(i, j)}T_2^{\gamma(i, k)}T_1^{\gamma(j, k)}
        = T_2^{\gamma(j, k)} T_1^{\gamma(i, k)}T_2^{\gamma(i, j)}.$$
\end{lem}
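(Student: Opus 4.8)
The plan is to verify Lemma~\ref{3t} by a direct case analysis on the relative order of $i$, $j$, $k$, reducing in each case to an identity that holds in $\mathscr{H}_2$ (the subalgebra generated by $T_1$ and $T_2$, which is governed by the braid relation $T_1T_2T_1 = T_2T_1T_2$ together with the quadratic relation $(T_\ell - q)(T_\ell + q^{-1}) = 0$). Note first that the quadratic relation gives $T_\ell^{-1} = T_\ell - q + q^{-1}$, so each $T_\ell^{\pm 1}$ is an invertible element, and the claimed identity is an equation between two products of invertible elements. The key combinatorial observation is that $\gamma(i,j)$ only depends on whether $i > j$ or $i \le j$; comparing the triple $(i,j,k)$, the values $(\gamma(i,j), \gamma(i,k), \gamma(j,k))$ fall into a small number of patterns, and by the transitivity of the ordering not all eight sign patterns in $\{-1,1\}^3$ actually occur (for instance $i > j$ and $j > k$ forces $i > k$, so the pattern $(1,-1,1)$ with that reading is impossible). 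So there are only a handful of genuine cases to check.

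First I would enumerate the admissible cases. Writing $a = \gamma(i,j)$, $b = \gamma(i,k)$, $c = \gamma(j,k)$: the cases $i \le j \le k$, $i \le k < j$, $j < i \le k$, $k < i \le j$, $j \le k < i$, $k < j < i$ cover all possibilities for the weak order among the three indices (with appropriate handling of equalities, which the definition of $\gamma$ resolves consistently since $\gamma(i,i) = -1$). For each, one reads off $(a,b,c)$ and must check $T_1^a T_2^b T_1^c = T_2^c T_1^b T_2^a$. When $a = b = c$ (all three $+1$ or all three $-1$) this is exactly the braid relation or its inverse. When the signs are $(-1,-1,1)$, $(-1,1,-1)$, $(1,-1,-1)$ or the reverses, one uses the braid relation to move a generator past the others and then the quadratic relation $T_\ell T_\ell^{-1} = 1$ to cancel. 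For example, for a pattern like $(1,1,-1)$ one wants $T_1 T_2 T_1^{-1} = T_2^{-1} T_1 T_2$, which follows by multiplying the braid relation $T_1 T_2 T_1 = T_2 T_1 T_2$ on the left by $T_2^{-1}$ and on the right by $T_1^{-1}$.

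The main obstacle — really the only nontrivial point — is bookkeeping: making sure every admissible sign pattern is covered and that no inadmissible one is mistakenly treated, and checking that the equality cases ($i = j$, $i = k$, $j = k$) are slotted into the right case by the convention $\gamma(i,i) = -1$. Once the admissible patterns are listed, each identity is a two- or three-line manipulation in $\mathscr{H}_2$ using only the braid relation and $T_\ell^{-1} = T_\ell - q + q^{-1}$, so no deeper structure of the Hecke algebra is needed. I would present the argument as: (1) list the six cases for the order of $i, j, k$ and tabulate $(\gamma(i,j), \gamma(i,k), \gamma(j,k))$; (2) observe these reduce to at most four distinct identities in $\{T_1, T_2\}$ up to the symmetry swapping the two sides; (3) verify each from the braid and quadratic relations.
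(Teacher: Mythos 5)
Your proposal is correct and follows essentially the same route as the paper: the paper's proof is the one-line remark that the identity "can be verified by the braid relation $T_1T_2T_1=T_{2}T_1T_{2}$ case by case," and your enumeration of the admissible sign patterns $(\gamma(i,j),\gamma(i,k),\gamma(j,k))$ via transitivity, followed by reduction of each to a left/right-multiplied form of the braid relation, is precisely that case check carried out explicitly. (One small simplification you could make: the quadratic relation is not actually needed anywhere — each of the six admissible patterns follows purely from the braid relation and invertibility of the $T_\ell$, so the identity already holds at the level of the braid group.)
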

\begin{proof}
   It can be verified directly by the braid relation $T_1T_{2}T_1=T_{2}T_1T_{2}$ case by case.
\end{proof}

\subsection{$q$-tensor superspaces}
We introduce a parity on $[m+n]$ by
\[\widehat{i}=\left\{\begin{array}{ll}
  0 & \mbox{if $1\leq i\leq m$},\\
  1 & \mbox{if $m< i\leq m+n$}
\end{array}\right.\]
and let $q_{i} = q^{(-1)^{\widehat{i}}}$.

Let $\V=\V_{\overline{0}}\oplus\V_{\overline{1}}$ be a vector space over $\mathbb{C}(q)$ with a specified basis $\{e_1,e_2,\ldots,e_{m+n}\}$, where $\V_{\overline{0}}$ (resp. $\V_{\overline{1}}$) is spanned by $\{e_i~|~1\leq i\leq m\}$ (resp. $\{e_i~|~m+1\leq i\leq m+n\}$).

For $I=(i_d,\ldots,i_2,i_1)\in[m+n]^d$, we denote
\[e_{I}=e_{i_d}  \cdots  e_{i_2}  e_{i_1}.\]
Then $\{e_{I}~|~I\in[m+n]^d\}$ forms a basis of $\V^{\otimes d}$.

As before, $\mathfrak{S}_d$ acts on $[m+n]^d$ from the right hand side. The right $\mathscr{H}_d$-action on $\V^{\otimes d}$ is defined on the basis element $e_{I}$ as follows: for $1\leq k<d$,
\begin{equation}\label{action:Tk}
  e_{I}. T_k=
  \left\{
  \begin{array}{ll}
    (-1)^{\widehat{i_k}\widehat{i_{k+1}}}e_{I.s_k} & \mbox{if $i_k>i_{k+1}$},\\
    (-1)^{\widehat{i_k}} q_{i_k}e_{I} &\mbox{if $ i_k=i_{k+1}$},\\
    (-1)^{\widehat{i_k}\widehat{i_{k+1}}}e_{I.s_k}+(q-q^{-1})e_{I}& \mbox{if $i_k<i_{k+1}$}.
  \end{array}
  \right.
\end{equation}

We can rewrite \eqref{action:Tk} as
\begin{equation}\label{actionTk}
e_I . T_k^{\gamma(i_k, i_{k+1})} = (-1)^{\widehat{i_k}\widehat{i_{k+1}} } q_{i_k}^{-\delta_{i_{k}i_{k+1}}}e_{Is_k},
\end{equation}for $1\leq k<d$.
Since $\gamma(j, i) = -2\delta_{i, j} - \gamma(i, j)$, we have
\[
  e_I . T_k^{\gamma(i_{k+1}, i_k)} = e_I .( T_k^{-2\delta_{i_k i_{k+1}}} T_k^{-\gamma(i_k, i_{k+1})}) = q_{i_k}^{-2\delta_{i_k i_{k+1}}}e_I. T_k^{-\gamma(i_k, i_{k+1})},
\]for $1\leq k<d$.
The above equation together with the fact $T_k^{-1}=T_k+(q^{-1}-q)$ implies
\[
  e_I . T_k^{\gamma(i_{k+1}, i_k)} = e_I.(T_k^{-\gamma(i_k, i_{k+1})} - q^{\delta_{i_k, i_{k+1}}} + q^{-\delta_{i_k, i_{k+1}}}),
\]for $1\leq k<d$.

\subsection{Super-extensions}
We set
\[
  \widetilde{T}_d(\V)=\mathrm{Ind}_{\mathscr{H}_d}^{\mathscr{H}_\infty}\V^{\otimes d} = \V^{\otimes d} \otimes_{\mathscr{H}_d} \mathscr{H}_\infty,
\] and \[\widetilde{T}(\V)=\bigoplus_{d\geq0}\widetilde{T}_d(\V).\]
Endow an algebra structure on $\widetilde{T}(\V)$ whose product $\cdot$ is defined by
\[
    (e_{j_k}\cdots e_{j_1}\otimes T_\tau)\cdot(e_{i_d}\cdots e_{i_1}\otimes T_\sigma)
    =e_{j_k}\cdots e_{j_1} e_{i_d}\cdots e_{i_1}\otimes T_\tau^{\uparrow d}T_\sigma.
\]
The product $\varphi \cdot T$, with $\varphi\in\widetilde{T}(\V)$ and $ T\in\mathscr{H}_\infty$, provides a natural right $\mathscr{H}_\infty$-action on $\widetilde{T}(\V)$.

\subsection{Multiplications and Derivations}
For any $\varphi\in\widetilde{T}(\V)$, define
\begin{alignat*}{2}
  L(\varphi):\widetilde{T}(\V) &\longrightarrow &\widetilde{T}(\V),\\
  \psi ~ &\longmapsto &\varphi\cdot\psi.
\end{alignat*}

For $j\in[m+n]$, define
$L(e_j^*):\widetilde{T}(\V)\rightarrow \widetilde{T}(\V)$ by
\begin{eqnarray}
&&L(e_j^*)(e_{i_d}\cdots e_{i_1} \otimes T_\sigma)
  \nonumber\\\label{Lej*}
  &=&\sum_{k=1}^d (-1)^{\widehat{j}(\widehat{i_d}+\cdots+\widehat{i_{k+1}})} \inner{e_j^*, e_{i_k}}
  g_j(e_{i_d})\cdots g_j(e_{i_{k+1}})f_j(e_{i_{k-1}})\cdots f_j(e_{i_1})\otimes T_\sigma,
\end{eqnarray}
where $f_j:\V\rightarrow\V$ is the linear endomorphism defined by
\[f_j(e_r) = {q_j}^{-\delta_{j r}} e_r\]
and $g_j:\V \rightarrow \V \otimes \mathscr{H}_2$ is the linear map defined by
\[g_j(e_r)= e_r T_1^{-\gamma(j, r)}.\]
It is easy to see that $L(\varphi)$ is well-defined.
For $L(e_j^*)$, we only need to show that it commutes with the action of $\mathscr{H}_{\infty}$.
We introduce a linear map $h_k^{(j)}:\widetilde{T}_d(\V) \rightarrow \widetilde{T}_d(\V)$ for $1\leq k\leq d$ defined by
\[
    h_k^{(j)}(e_{i_d}\cdots e_{i_1}) = (-1)^{\widehat{j}(\widehat{i_d}+\cdots+\widehat{i_{k+1}})} \inner{e_j^*, e_{i_k}}
    g_j(e_{i_d})\cdots g_j(e_{i_{k+1}})f_j(e_{i_{k-1}})\cdots f_j(e_{i_1}).
\]
It is just the $k$-th term of $L(e_j^*)$ on $\widetilde{T}_d(\V)$ in \eqref{Lej*}.
For any $T \in \mathscr{H}_{\infty}$, we define $\pi(T) \in \mathrm{End}_{\mathbb{C}}(\widetilde{T}(\V))$ by $\pi (T)(\varphi)=\varphi \cdot T$ for $\varphi\in \widetilde{T}(\V)$.
To prove the well-definedness of $L(e_j^*)$, it suffices to show that $\pi(T_1), ..., \pi(T_{d-1})$ commutes with $\sum_{k=1}^d h_k^{(j)}$.
\begin{lem}
 Write $h_k$ for $h_k^{(j)}$. Then for $r = 1, ..., d-1$, we have
\begin{enumerate}
    \item[(i)] $h_k$ commutes with $\pi(T_r)$ unless $k = r, r+1$,
    \item[(ii)] $h_{r+1} + h_r$ commutes with $\pi(T_r)$.
\end{enumerate}
\end{lem}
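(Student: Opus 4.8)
The plan is to verify both statements by a direct computation on basis elements $e_I = e_{i_d}\cdots e_{i_1}$, comparing $h_k(e_I).T_r$ with $h_k(e_I.T_r)$. The key observation is that $h_k$ acts on the $k$-th tensor slot by the covector $e_j^*$ and on the other slots by the \emph{local} endomorphisms $f_j$ (below position $k$) and $g_j$ (above position $k$), while the right action of $T_r$ only couples the slots $r$ and $r+1$. So when $\{r,r+1\}\cap\{k\}=\emptyset$, the operator $T_r$ acts on two slots that $h_k$ treats ``uniformly'' — both strictly below $k$, both strictly above $k$, or one of each but neither equal to $k$ — and in each of these cases I expect $h_k$ and $\pi(T_r)$ to commute for structural reasons, which is part (i).

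For part (i), I would split into the cases $k<r$, $k>r+1$. When $k<r$, slots $r$ and $r+1$ both lie in the ``$g_j$ region'' of $h_k$, so I must show $g_j(e_a)g_j(e_b)\otimes(\text{something})$ transforms correctly under $T_r = T_1$ acting on those two slots; since $g_j(e_a) = e_a T_1^{-\gamma(j,a)}$, this reduces, after pushing the $T_1^{-\gamma(j,\cdot)}$ factors past each other using the braid relation, precisely to Lemma~\ref{3t} (the three-term identity $T_1^{\gamma(i,j)}T_2^{\gamma(i,k)}T_1^{\gamma(j,k)} = T_2^{\gamma(j,k)}T_1^{\gamma(i,k)}T_2^{\gamma(i,j)}$), together with the defining relation \eqref{action:Tk} for how $T_r$ permutes the indices. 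When $k>r+1$, slots $r$ and $r+1$ lie in the ``$f_j$ region'', where $f_j$ is diagonal (scalar $q_j^{-\delta_{jr}}$ on each slot), so $f_j$ manifestly commutes with the right $\mathscr{H}$-action restricted to those slots, using that \eqref{action:Tk} preserves the multiset of indices in slots $r,r+1$ and the scalar $q_j^{-\delta_{j\cdot}}$ depends only on that multiset. The subcase where one of $r,r+1$ equals $k$ is excluded from (i).

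For part (ii), the content is that although neither $h_r$ nor $h_{r+1}$ individually commutes with $\pi(T_r)$, their sum does: when we apply $T_r$ and swap slots $r$ and $r+1$, the ``distinguished slot'' carrying $e_j^*$ moves between position $r$ and position $r+1$, so the $k=r$ term and the $k=r+1$ term of $L(e_j^*)$ get interchanged (up to the correction terms coming from the quadratic relation $(T_r-q)(T_r+q^{-1})=0$, equivalently $T_r^{-1}=T_r+(q^{-1}-q)$, and from the sign $(-1)^{\widehat{i_r}\widehat{i_{r+1}}}$). I would compute $(h_{r+1}+h_r)(e_I).T_r$ by applying \eqref{action:Tk} slot-wise, and separately compute $(h_{r+1}+h_r)(e_I.T_r)$ using the three cases $i_r>i_{r+1}$, $i_r=i_{r+1}$, $i_r<i_{r+1}$ of \eqref{action:Tk}; the key algebraic inputs are the rewriting $g_j(e_r)= e_r T_1^{-\gamma(j,r)}$ combined with the identities $T_1^{\gamma(i_{r+1},i_r)} = T_1^{-\gamma(i_r,i_{r+1})} - q^{\delta_{i_r,i_{r+1}}} + q^{-\delta_{i_r,i_{r+1}}}$ and $e_I.T_1^{\gamma(i_r,i_{r+1})} = (-1)^{\widehat{i_r}\widehat{i_{r+1}}}q_{i_r}^{-\delta_{i_ri_{r+1}}}e_{I.s_r}$ derived in the previous subsection, which package exactly the discrepancy between $f_j$ and $g_j$ on a swapped pair.

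The main obstacle I anticipate is the bookkeeping in part (ii): one has to track the interplay of three separate sign/scalar contributions — the Koszul-type sign $(-1)^{\widehat j(\widehat{i_d}+\cdots)}$ in the definition of $h_k$, the sign $(-1)^{\widehat{i_r}\widehat{i_{r+1}}}$ from the $\mathscr{H}_d$-action, and the $q$-corrections from $T_r^{-1}=T_r+(q^{-1}-q)$ — and check that after the swap the $k=r$ summand of one side matches the $k=r+1$ summand of the other, with all residual terms cancelling. Organizing this cleanly, probably by isolating the contribution of slots $r$ and $r+1$ via the rewriting $g_j(e_r)=e_rT_1^{-\gamma(j,r)}$ and reducing everything to Lemma~\ref{3t} and \eqref{actionTk}, is where the real work lies; the cases $i_r<i_{r+1}$ and $i_r=i_{r+1}$ (where extra $(q-q^{-1})$ and $q_i$ terms appear) will be the most delicate to match up.
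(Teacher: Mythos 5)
Your plan is sound in outline, but it takes a genuinely different and heavier route than the paper's. You propose a direct slot-by-slot verification on basis elements $e_I$, comparing $h_k(e_I).T_r$ with $h_k(e_I.T_r)$ across the cases $k<r$, $k>r+1$, and $k\in\{r,r+1\}$, and within part (ii) further splitting according to whether $i_r>i_{r+1}$, $i_r=i_{r+1}$, or $i_r<i_{r+1}$. The paper instead first establishes a single recursion,
\[
    h_k = q_j^{2\delta_{j, i_{k+1}}}\,\pi\bigl(T_k^{-\gamma(j, i_{k+1})}\bigr)\, h_{k+1}\,\pi\bigl(T_k^{\gamma(i_k, i_{k+1})}\bigr),
\]
which exhibits $h_k$ as (essentially) a conjugate of $h_{k+1}$ by a power of $T_k$. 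With this identity in hand, part (i) collapses to one computation for the base case $r = k+1$ (using Lemma~\ref{3t}) followed by induction in $k$, and part (ii) is obtained by adding two short consequences of the same recursion — the pointwise index-case analysis you are bracing for never appears, because the factor $\inner{e_j^*, e_{i_k}}$ already forces $i_k = j$ in any nonzero summand, and the $q$-corrections arising from $T_k^{-1}=T_k+(q^{-1}-q)$ are absorbed into a single scalar discrepancy between $\pi(T_k^{\gamma(i_k,i_{k+1})})$ and $\pi(T_k^{\gamma(j,i_{k+1})})$. What your direct method buys you is transparency about \emph{why} the $f_j$ region is harmless (it is diagonal) and why the $g_j$ region needs the three-term identity; what the paper's method buys is that all the delicate sign and $q$-power bookkeeping you flag as the ``main obstacle'' is done exactly once, in deriving the recursion, rather than once per case. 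Both should succeed, but if you carry your plan through I would strongly recommend you extract the displayed recursion as an intermediate lemma anyway — it is exactly the device that turns the ``most delicate'' part (ii) from a three-way case check into a two-line addition.
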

\begin{proof}Fix $i_1, ..., i_d$.
  By the definition of $h_k$, we have the following relation between $h_k$ and $h_{k+1}$:
  \begin{equation}
    h_k = q_i^{2\delta_{j, i_{k+1}}} \pi(T_k^{-\gamma(i, i_{k+1})}) h_{k+1}  \pi(T_k^{\gamma(i_k, i_{k+1})}). \label{h-relation}
  \end{equation}
  Then when $r = k+1$ and $d \ge 3$, we have
  \begin{align*}
    &h_k \pi(T_{k+1}^{\gamma(i_{k+1}, i_{k+2})})\\
    &= q_i^{2(\delta_{j, i_{k+1}} + \delta_{j, i_{k+2}})}
    T_k^{-\gamma(i_k, i_{k+2})}
    \pi(T_{k+1}^{-\gamma(i_k, i_{k+1})}) h_{k+2}
    \pi(T_{k+1}^{\gamma(i_k, i_{k+1})} T_k^{\gamma(i_k, i_{k+2})} T_{k+1}^{\gamma(i_{k+1}, i_{k+2})})  \delta_{j, i_k}\\
    &= q_i^{2(\delta_{j, i_{k+1}} + \delta_{j, i_{k+2}})}
    \pi(T_{k+1}^{\gamma(i_{k+1}, i_{k+2})} T_k^{-\gamma(i_k, i_{k+1})} T_{k+1}^{-\gamma(i_k, i_{k+2})})
    h_{k+2}
    \pi(T_{k+1}^{\gamma(i_k, i_{k+2})} T_k^{\gamma(i_k, i_{k+1})})  \delta_{j, i_k} \\
    &= \pi(T_{k+1}^{\gamma(i_{k+1}, i_{k+2})}) h_k,
  \end{align*}
  where the first equality follows from \eqref{h-relation} and the second one from Lemma~\ref{3t}.

  When $k+2 \le r \le d-1$, we show that $\pi(T_r)$ commutes with $h_k$ by induction on $k$.
  We start the induction by $\pi(T_r) h_{r-1} = h_{r-1} \pi(T_r)$ which is the result of the last paragraph.
  For $k < r-1$, we have
  \begin{align*}
    \pi(T_r) h_k &= \pi(T_r) q_j^{2\gamma(j, i_{k+1})} \pi(T_k^{-\gamma(j, i_{k+1})}) h_{k+1}  \pi(T_k^{\gamma(i_k, i_{k+1})}) \delta_{j, i_k}\\ 
    &=  q_j^{2\delta_{j, i_{k+1}}} \pi(T_k^{-\gamma(j, i_{k+1})} T_r) h_{k+1}  \pi(T_k^{\gamma(i_k, i_{k+1})}) \delta_{j, i_k}\\
    &=  q_j^{2\delta_{j, i_{k+1}}} \pi(T_k^{-\gamma(j, i_{k+1})})  h_{k+1} \pi(T_k^{\gamma(i_k, i_{k+1})}) \pi(T_r) \delta_{j, i_k}\\
    &= h_k\pi(T_r),
  \end{align*}which proves (i).

  For (ii), notice that $$  h_{k+1}  \pi(T_k^{\gamma(i_k, i_{k+1})}) = q_j^{-2\delta_{j, i_{k+1}}} \pi(T_k^{\gamma(j, i_{k+1})}) h_k,$$ and
  \begin{align*}
    h_k \pi(T_k^{\gamma(i_k, i_{k+1})})
    &= q_j^{2\delta_{j, i_k}} \pi(T_k^{-\gamma(i_{k+1}, i_k)}) h_{k+1}  \pi(T_k^{\gamma(i_{k+1}, i_k)} T_k^{\gamma(i_k, i_{k+1})})\\
    &= q_j^{2\delta_{j, i_k}} \pi(T_k^{-\gamma(i_{k+1}, i_k)}) h_{k+1}  \pi(T_k^{-2\delta_{i_k, i_{k+1}}}) \\
    &= q_j^{2\delta_{j, i_k}} \pi(T_k^{-\gamma(i_{k+1}, i_k)}) h_{k+1}  {q_j}^{-2\delta_{i_k, i_{k+1}}}\\
    &= \pi(T_k^{-\gamma(i_{k+1}, i_k)}) h_{k+1} \\
    &=\pi(T_k^{\gamma(i_k, i_{k+1})} + q^{\delta_{i_k, i_{k+1}}} - q^{-\delta_{i_k, i_{k+1}}}) h_{k+1} \\
    &= \pi(T_k^{\gamma(i_k, i_{k+1})}) h_{k+1} + (q^{\delta_{i_k, i_{k+1}}} - q^{-\delta_{i_k, i_{k+1}}}) h_{k+1} \\
    &= \pi(T_k^{\gamma(i_k, i_{k+1})}) h_{k+1} + (-1)^{\widehat{i}} (q^{\delta_{i_k, i_{k+1}}} - q^{-\delta_{i_k, i_{k+1}}}) \pi(T_k^{\gamma(j, i_{k+1})}) q_j^{-\delta_{j, i_{k+1}}} h_k \\
    &= \pi(T_k^{\gamma(i_k, i_{k+1})}) h_{k+1} + (1 - q_j^{-2\delta_{j, i_{k+1}}}) \pi(T_k^{\gamma(j, i_{k+1})}) h_k \\
    &= \pi(T_k^{\gamma(i_k, i_{k+1})}) h_{k+1} + \pi(T_k^{\gamma(j, i_{k+1})}) h_k - q_j^{-2\delta_{j, i_{k+1}}} \pi(T_k^{\gamma(j, i_{k+1})}) h_k.
  \end{align*}
  Adding up the above two equations, we get the result.
\end{proof}

\subsection{General linear quantum supergroups}
The general linear quantum supergroup $U_q(\mathfrak{gl}(m|n))$ is a $\mathbb{C}(q)$-algebra generated by
\begin{equation*}
\left\{\begin{array}{lll}
\mbox{even generators:} & E_i,F_i,K_j^{\pm1}, & 1\leq i,j \leq m+n, i\neq m,m+n;\\
\mbox{odd generators:} & E_m,F_m &
\end{array}\right.
\end{equation*}
subject to the following relations:
 \begin{align*}
   & K_iK_j=K_jK_i, \quad K_iK_i^{-1}=K_i^{-1}K_i=1;\\
   & K_iE_j=q_i^{\delta_{ij}-\delta_{i,j+1}}E_jK_i,\quad K_iF_j=q_i^{\delta_{i,j+1}-\delta_{ij}}F_jK_i;\\
   &E_iF_j-(-1)^{\delta_{im}\delta_{jm}}F_jE_i=
   \delta_{ij}\frac{K_iK_{i+1}^{-1}-K_i^{-1}K_{i+1}}{q_i-q_i^{-1}};\\
   &E_iE_j=E_jE_i,\quad F_iF_j=F_jF_i, \quad (|i-j|>1);\\
   & E_i^2E_j-(q_i+q_i^{-1})E_iE_jE_i+E_jE_i^2=0,\quad (i\neq m, |i-j|=1);\\
    & F_i^2F_j-(q_i+q_i^{-1})F_iF_jF_i+F_jF_i^2=0,\quad (i\neq m, |i-j|=1);\\
    &E_m^2=F_m^2=E_mE_{m-1,m+2}+E_{m-1,m+2}E_m=F_mF_{m-1,m+2}+F_{m-1,m+2}F_m=0,
 \end{align*}
where
\begin{align*}
  E_{m-1,m+2}&=E_{m-1}E_mE_{m+1}-q^{-1}E_{m-1}E_{m+1}E_m-qE_mE_{m+1}E_{m-1}+E_{m+1}E_mE_{m-1},\\
  F_{m-1,m+2}&=F_{m-1}F_mF_{m+1}-q^{-1}F_{m-1}F_{m+1}F_m-qF_mF_{m+1}F_{m-1}+F_{m+1}F_mF_{m-1}.
\end{align*}

There is a Hopf superalgebra structure on $U_q(\mathfrak{gl}(m|n))$, whose comultiplication $\Delta$ is
\begin{align*}
  &\Delta(K_i)=K_i\otimes K_i,\\
  &\Delta(E_i)=1\otimes E_i+E_i\otimes K_iK_{i+1}^{-1},\\
  & \Delta(F_i)=F_i\otimes 1+K_{i+1}K_i^{-1}\otimes F_i.
\end{align*}Since $\Delta$ is coassociative, we can define
\[
  \Delta^{(d)} = (\Delta \otimes \mathrm{id}^{\otimes d-1}) \Delta^{(d-1)},
\]for $d\geq 2$. Here we set $\Delta^{(1)}=\Delta$ for convenience.

The following lemma can be derived by a straightforward computation.
\begin{lem}
  For $1 \le i \le m+n$, we have
  \begin{align*}
    \Delta^{(d)}(K_i)  &= K_i^{\otimes d+1},\\
    \Delta^{(d)}(E_i)  &= 1 \otimes \Delta^{(d-1)}(E_i) + E_i \otimes (K_iK_{i+1}^{-1})^{\otimes d},\\
    \Delta^{(d)}(F_i)  &= F_i \otimes 1^{\otimes d}  + K_{i+1}K_i^{-1}\otimes \Delta^{(d-1)}(F_i).
  \end{align*}
  Particularly, $|\Delta^{(d)}(E_i)| = |E_i|$ and $|\Delta^{(d)}(F_i)|=|F_i|$, where $|\cdot|$ means the parity of elements in $U_q(\mathfrak{gl}(m|n))$.
\end{lem}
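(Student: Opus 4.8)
The plan is to establish all three identities simultaneously by induction on $d$. The base case $d=1$ is just the defining formulas for $\Delta$ on the generators, read with the conventions $\Delta^{(0)}=\mathrm{id}$ and $X^{\otimes 1}=X$. For the inductive step I would first record the ``left-nested'' form of iterated comultiplication,
\[
  \Delta^{(d)}=(\mathrm{id}\otimes\Delta^{(d-1)})\circ\Delta\qquad(d\geq 2),
\]
which is the standard consequence of coassociativity that all iterates of $\Delta$ coincide; it follows from the given recursion $\Delta^{(d)}=(\Delta\otimes\mathrm{id}^{\otimes d-1})\circ\Delta^{(d-1)}$ by a short induction using $(\Delta\otimes\mathrm{id})\Delta=(\mathrm{id}\otimes\Delta)\Delta$.

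With this in hand, each formula is obtained by feeding the known value of $\Delta$ on the generator into $\mathrm{id}\otimes\Delta^{(d-1)}$. For $K_i$, which is grouplike, $\Delta^{(d)}(K_i)=(\mathrm{id}\otimes\Delta^{(d-1)})(K_i\otimes K_i)=K_i\otimes K_i^{\otimes d}=K_i^{\otimes d+1}$, using the inductive hypothesis for $K_i$ at level $d-1$. For $E_i$, applying $\mathrm{id}\otimes\Delta^{(d-1)}$ to $\Delta(E_i)=1\otimes E_i+E_i\otimes K_iK_{i+1}^{-1}$ gives
\[
  \Delta^{(d)}(E_i)=1\otimes\Delta^{(d-1)}(E_i)+E_i\otimes\Delta^{(d-1)}(K_iK_{i+1}^{-1}),
\]
and since $\Delta^{(d-1)}$ is an algebra homomorphism and $K_i$, $K_{i+1}$ commute, the $K$-formula just proved yields $\Delta^{(d-1)}(K_iK_{i+1}^{-1})=K_i^{\otimes d}(K_{i+1}^{-1})^{\otimes d}=(K_iK_{i+1}^{-1})^{\otimes d}$, which is exactly the claimed recursion for $E_i$. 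The $F_i$ case is identical, starting from $\Delta(F_i)=F_i\otimes 1+K_{i+1}K_i^{-1}\otimes F_i$ and using $\Delta^{(d-1)}(1)=1^{\otimes d}$.

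Finally, the parity claim follows because $\Delta$ is a homomorphism of superalgebras, hence so is each $\Delta^{(d)}$, so it preserves the $\mathbb{Z}_2$-grading; thus $|\Delta^{(d)}(E_i)|=|E_i|$ and $|\Delta^{(d)}(F_i)|=|F_i|$. (It is equally visible from the explicit formulas: each summand carries a single odd-or-even factor $E_i$ or $F_i$ and otherwise only the even elements $K_j^{\pm1}$ and $1$.) I do not anticipate a genuine obstacle here; the only point that calls for any care is the Koszul signs in the tensor-power superalgebra $\mathbf{U}_{m|n}^{\otimes d+1}$, but in every product occurring above all factors except the single $E_i$ (resp.\ $F_i$) are even, so these signs are all $+1$ and the computation is purely mechanical.
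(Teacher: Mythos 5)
Your proof is correct. The paper gives no argument for this lemma (it only remarks that it ``can be derived by a straightforward computation''), and your induction on $d$ — first passing to the equivalent left-nested form $\Delta^{(d)}=(\mathrm{id}\otimes\Delta^{(d-1)})\circ\Delta$ and then feeding in $\Delta$ on the generators, with the observation that only the even factors $K_j^{\pm 1}$ and $1$ occur alongside the single $E_i$ or $F_i$ so no Koszul signs arise — is exactly the routine computation the authors have in mind.
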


The superspace $\V$ admits a natural representation of $U_q(\mathfrak{gl}(m|n))$ via
$$K_i.e_{j}=q_i^{\delta_{ij}}e_j,\quad E_i.e_j=\delta_{i+1,j}e_{i},\quad F_i.e_j=\delta_{i,j}e_{i+1}.$$
Then the $q$-tensor superspace $\V^{\otimes d}$ is also equipped with a $U_q(\mathfrak{gl}(m|n))$-module structure via $\Delta^{(d)}$.
We shall show that the $U_q(\mathfrak{gl}(m|n))$-module structure can be extended to $\widetilde{T}(\V)$ by $x.(e_{I}\otimes T_\sigma)=(x. e_{I})\otimes T_\sigma$ for any $x\in U_q(\mathfrak{gl}(m|n)), e_{I}\in \V^{\otimes d}, T_\sigma\in\mathscr{H}_\infty$.
It can be seen instantly for $K$, since
$$K_j.(e_{i_d} \cdots e_{i_1})=(q_{j})^{\delta_{j i_d} + \cdots + \delta_{j i_1}} e_{i_d}\cdots e_{i_1},$$
which commutes with $\mathscr{H}_{\infty}$.
The commutativity of $E$ and $F$ with $\mathscr{H}_{\infty}$ will be proved later (see \ref{uc}).

\section{Properties of operators in quantum version}

\subsection{Commutation relations}
  \begin{prop}\label{q-kc} For any $i, j \in[m+n]$ and $1\leq k<d$, as operators on $\widetilde{T}(\V)$, we have
\[
  K_j L(e_i) = q_j^{\delta_{ij}} L(e_i) K_j,\quad
  K_j L(e_i^*) = q_j^{-\delta_{ij}} L(e_i^*) K_j,\quad
  K_j L(T_k) = L(T_k) K_j.
\]
\end{prop}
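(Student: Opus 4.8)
The plan is to verify each of the three identities on a basis element $e_{i_d}\cdots e_{i_1}\otimes T_\sigma$ of $\widetilde{T}_d(\V)$, since $\widetilde{T}(\V)$ is spanned by such elements and all the operators involved are linear. First I would handle the commutation of $K_j$ with $L(T_k)$: this is essentially immediate, because by definition $K_j.(e_I\otimes T_\sigma) = (K_j.e_I)\otimes T_\sigma$ and $L(T_k)$ acts only on the Hecke-algebra factor on the right (it is right multiplication, hence a morphism of left $\mathbf{U}_{m|n}$-modules once we know the action is well-defined), so the two operations act on independent tensor factors and trivially commute. One subtlety worth a sentence: $L(T_k)$ on $\widetilde{T}_d(\V)$ lands in $\widetilde{T}_{d-0}(\V)$ (it changes $T_\sigma$ to $T_k^{\uparrow 0}\cdot$—wait, more precisely $L(T_k)(e_I\otimes T_\sigma)=e_I\otimes T_{k}T_\sigma$ is not quite it; rather $L(T_k)$ is left multiplication in $\widetilde T(\V)$, sending degree $d$ to degree $d$ by $e_I\otimes T_\sigma\mapsto (e_\emptyset\otimes T_k)\cdot(e_I\otimes T_\sigma) = e_I\otimes T_k^{\uparrow d}T_\sigma$), but in all cases it only touches the $\mathscr{H}_\infty$-factor, so commutativity with $K_j$ is clear.

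Next I would treat $K_j L(e_i) = q_j^{\delta_{ij}} L(e_i) K_j$. Here $L(e_i)$ is left multiplication by the degree-one element $e_i\otimes 1$, so $L(e_i)(e_{i_d}\cdots e_{i_1}\otimes T_\sigma) = e_i e_{i_d}\cdots e_{i_1}\otimes T_\sigma$. Applying $K_j$ to this and using $K_j.(e_i e_{i_d}\cdots e_{i_1}) = q_j^{\delta_{j i} + \delta_{j i_d}+\cdots+\delta_{j i_1}} e_i e_{i_d}\cdots e_{i_1}$, while $L(e_i)K_j(e_{i_d}\cdots e_{i_1}\otimes T_\sigma) = q_j^{\delta_{j i_d}+\cdots+\delta_{j i_1}} e_i e_{i_d}\cdots e_{i_1}\otimes T_\sigma$, the two differ exactly by the factor $q_j^{\delta_{ij}}$, as claimed. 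I should note that this uses the explicit formula $K_j.(e_{i_d}\cdots e_{i_1}) = q_j^{\delta_{ji_d}+\cdots+\delta_{ji_1}}e_{i_d}\cdots e_{i_1}$ stated just above Proposition~\ref{q-kc}, and that this action is compatible with the quotient by $\mathscr{H}_d$ so the computation descends to $\widetilde{T}(\V)$.

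The third identity, $K_j L(e_i^*) = q_j^{-\delta_{ij}} L(e_i^*) K_j$, is the one requiring real care, so this is where I expect the main (mild) obstacle. One applies $K_j$ to the defining formula \eqref{Lej*} for $L(e_i^*)(e_{i_d}\cdots e_{i_1}\otimes T_\sigma)$. Each summand is $(-1)^{\widehat{i}(\widehat{i_d}+\cdots+\widehat{i_{k+1}})}\inner{e_i^*,e_{i_k}}\, g_i(e_{i_d})\cdots g_i(e_{i_{k+1}}) f_i(e_{i_{k-1}})\cdots f_i(e_{i_1})\otimes T_\sigma$, where $g_i$ and $f_i$ only rescale or modify the Hecke factor but preserve the underlying index sequence $(i_d,\ldots,\widehat{i_k},\ldots,i_1)$. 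The key point is that $K_j$ acts on such a term by $q_j$ to the power $\sum_{\ell\neq k}\delta_{j i_\ell}$ (the exponent over all the surviving indices), whereas when one instead applies $L(e_i^*)$ after $K_j$, the incoming element $e_{i_d}\cdots e_{i_1}$ has already been scaled by $q_j^{\sum_{\ell=1}^d \delta_{j i_\ell}}$, which includes the extra $\delta_{j i_k}$. Since the summand is nonzero only when $i_k = i$, that extra factor is $q_j^{\delta_{ji}} = q_j^{\delta_{ij}}$, and it appears on the wrong side, giving precisely the claimed $q_j^{-\delta_{ij}}$ discrepancy. The one thing to check carefully is that $K_j$ genuinely commutes with each of $f_i$ and $g_i$ as operators (i.e. $K_j$, acting diagonally, is unaffected by the $f_i$-rescaling and by the $g_i$-twist into $\V\otimes\mathscr{H}_2$), which follows because $f_i$ is diagonal in the basis $\{e_r\}$ and $g_i(e_r)=e_r T_1^{-\gamma(i,r)}$ leaves the $\V$-component $e_r$ intact while $K_j$ ignores the $\mathscr{H}$-factor; hence the exponent of $q_j$ produced by $K_j$ depends only on the index multiset, as used above. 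Assembling these observations term by term yields the identity.
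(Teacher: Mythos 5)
Your proof is correct and follows the same approach the paper intends; the paper's proof is simply ``Check directly,'' and you have supplied exactly that direct verification. The key observations you make --- that $L(T_k)$ touches only the $\mathscr{H}_\infty$-factor while $K_j$ touches only the $\V^{\otimes d}$-factor; that $L(e_i)$ contributes an extra index $i$ to the exponent of $q_j$; and that the nonvanishing terms of $L(e_i^*)$ remove one index equal to $i$, so $K_j$ applied afterward sees an exponent smaller by $\delta_{ij}$ --- are precisely the calculations needed, and the point that $K_j$'s eigenvalue depends only on the multiset of $\V$-indices (hence is unaffected by the Hecke twists coming from $g_i$) is the right thing to note to close the argument.
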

\begin{proof}
  Check directly.
\end{proof}

\begin{lem}\label{q-cr-1}
  For $1 \le j \le m+n$,
  \[
    L(e_j) L(e_j^*) = \frac{K_j - K_j^{-1}}{q_j - q_j^{-1}}.
  \]
  \end{lem}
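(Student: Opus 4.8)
The plan is to verify the identity by evaluating both sides on an arbitrary basis element $e_{i_d}\cdots e_{i_1}\otimes T_\sigma$ of $\widetilde{T}_d(\V)$, reducing everything to a computation inside $\V^{\otimes d}$ since the $\mathscr{H}_\infty$-factor $T_\sigma$ is inert under all operators involved. First I would apply $L(e_j^*)$ using the defining formula \eqref{Lej*}: this produces a sum over $k$ with $1\le k\le d$ of terms that are nonzero only when $i_k = j$, each weighted by the sign $(-1)^{\widehat{j}(\widehat{i_d}+\cdots+\widehat{i_{k+1}})}$ and having the first $d-k$ factors acted on by $g_j$ and the last $k-1$ factors acted on by $f_j$. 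Then I would left-multiply by $e_j$, i.e.\ apply $L(e_j)$, which prepends $e_j$ and (via the product $\cdot$ on $\widetilde{T}(\V)$) shifts the Hecke-algebra generators coming from $g_j$ upward by one index.

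The key step is to recognize the resulting expression. After applying $L(e_j)$, in the $k$-th surviving term we have $e_j$ followed by $g_j(e_{i_d})\cdots g_j(e_{i_{k+1}})$ followed by $e_j$ (in the old slot $k$) followed by $f_j(e_{i_{k-1}})\cdots f_j(e_{i_1})$. Here I would use the identity \eqref{actionTk}, which says precisely that acting by $T_k^{\gamma(i_k,i_{k+1})}$ swaps two adjacent tensor slots up to the sign $(-1)^{\widehat{i_k}\widehat{i_{k+1}}}$ and a power of $q_{i_k}$; unwinding the definition $g_j(e_r)=e_r T_1^{-\gamma(j,r)}$ and the shift convention, one sees that each factor $g_j(e_{i_\ell})$ applied against the $e_j$ sitting to its right acts to move that $e_j$ leftward past $e_{i_\ell}$. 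Pushing the rightmost $e_j$ (in slot $k$) all the way to the front through all the $g_j$-factors, and carefully collecting the accumulated signs and $q_j^{\pm\delta}$-powers, should telescope the whole $k$-th term into a scalar multiple of $e_{i_d}\cdots e_{i_1}$, where the scalar is a monomial in $q_j^{\pm1}$ determined by how many of $i_{k+1},\dots,i_d$ equal $j$.

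Finally I would sum over $k$. Grouping the terms by the value of $c = \#\{\ell : i_\ell = j\}$ and summing the geometric-type series in $q_j^{\pm1}$ that arises, the total should collapse to $(q_j^{c} - q_j^{-c})/(q_j-q_j^{-1})$ times $e_{i_d}\cdots e_{i_1}\otimes T_\sigma$; comparing with the action $K_j.(e_{i_d}\cdots e_{i_1}) = q_j^{c}\,e_{i_d}\cdots e_{i_1}$ from the previous subsection, this is exactly $\dfrac{K_j - K_j^{-1}}{q_j - q_j^{-1}}$ applied to the basis element. The main obstacle I anticipate is purely bookkeeping: correctly tracking the interaction between the parity signs $(-1)^{\widehat{j}\widehat{i_\ell}}$, the $q_j^{\pm\delta_{j,i_\ell}}$ factors from $f_j$ and from the $i_\ell=j$ instances inside $g_j$, and the index shift $^{\uparrow 1}$ on the Hecke generators — so I would organize the computation by first treating the case where all $i_\ell\ne j$ except one, then the general case, to make the telescoping transparent.
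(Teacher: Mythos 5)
Your proposal follows exactly the paper's argument: expand $L(e_j)L(e_j^*)$ on a basis element $e_{i_d}\cdots e_{i_1}\otimes T_\sigma$, use \eqref{actionTk} to slide the prepended $e_j$ through the $g_j$- and $f_j$-factors so that the $k$-th term becomes $q_j^{(\delta_{j i_d}+\cdots+\delta_{j i_{k+1}})-(\delta_{j i_{k-1}}+\cdots+\delta_{j i_1})}\inner{e_j^*,e_{i_k}}\,e_{i_d}\cdots e_{i_1}\otimes T_\sigma$, and then sum over $k$ to obtain $(q_j^{c}-q_j^{-c})/(q_j-q_j^{-1})$ with $c=\sum_\ell\delta_{j i_\ell}$, matching the eigenvalue of $(K_j-K_j^{-1})/(q_j-q_j^{-1})$. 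One small descriptive slip: after $L(e_j^*)$ acts, slot $k$ is deleted (it contributes only the scalar $\inner{e_j^*,e_{i_k}}$), so there is only the single prepended $e_j$, which slides rightward into slot $k$ rather than a second $e_j$ sitting there and moving left, and the resulting sum over $k$ collapses by telescoping rather than as a geometric series.
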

\begin{proof}
  For $I = (i_d,\ldots,i_2,i_1) \in [m+n]^d$, by a direct computation, we have
  \begin{align*}
    &\phantom{=} L(e_j) L(e_j^*) (e_{i_d}\cdots e_{i_1}\otimes T_\sigma ) \\
    &= \sum_{k=1}^{d}  (-1)^{\widehat{j}(\widehat{i_d}+\cdots+\widehat{i_{k+1}})} \inner{e_j^*, e_{i_k}}
    e_j g_j(e_{i_d})\cdots g_j(e_{i_{k+1}})f_j(e_{i_{k-1}})\cdots f_j(e_{i_1})\otimes T_\sigma \\
    &= \sum_{k=1}^{d}  \inner{e_j^*, e_{i_k}}
    e_j e_{i_d}(-1)^{\widehat{j} \widehat{i_d}} T_1^{-\gamma(j, i_d)}
    \cdots e_{i_{k+1}} (-1)^{\widehat{j} \widehat{i_{k+1}}} T_1^{-\gamma(j, i_{k+1})}
    f_j(e_{i_{k-1}})\cdots f_j(e_{i_1})\otimes T_\sigma \\
    &= \sum_{k=1}^{d}  q_{j}^{(\delta_{j i_d} + \cdots + \delta_{j i_{k+1}}) - (\delta_{j i_{k-1}}+ \cdots + \delta_{j i_1})}
    e_{i_d} \cdots e_{i_{k+1}} \inner{e_j^*, e_{i_k}} e_j e_{i_{k-1}} \cdots e_{i_1}\otimes T_\sigma \quad \mbox{(by \eqref{actionTk})}\\
    &= \sum_{k=1}^{d}  q_{j}^{(\delta_{j i_d} + \cdots + \delta_{j i_{k+1}}) - (\delta_{j i_{k-1}}+ \cdots + \delta_{j i_1})}
    \inner{e_j^*, e_{i_k}} e_{i_d} \cdots e_{i_{k+1}} e_{i_k} e_{i_{k-1}} \cdots e_{i_1}\otimes T_\sigma \\
    &= \Bigg(\sum_{k=1}^{d}  \delta_{j i_k} q_{j}^{(\delta_{j i_d} + \cdots + \delta_{j i_{k+1}}) - (\delta_{j i_{k-1}}+ \cdots + \delta_{j i_1})}\Bigg)
    e_{i_d} \cdots e_{i_{k+1}} e_{i_k} e_{i_{k-1}} \cdots e_{i_1}\otimes T_\sigma.
  \end{align*}
  Note that
  \begin{align*}
    &\sum_{k=1}^{d}  \delta_{j i_k} q_{j}^{(\delta_{j i_d} + \cdots + \delta_{j i_{k+1}}) - (\delta_{j i_{k-1}}+ \cdots + \delta_{j i_1})}\\
    &= \sum_{k=1}^{d}  \frac{q_{j}^{(\delta_{j i_d} + \cdots + \delta_{j i_{k+1}} + \delta_{j i_{k}}) - (\delta_{j i_{k-1}} + \cdots + \delta_{j i_1})} - q_{j}^{(\delta_{j i_d} + \cdots + \delta_{j i_{k+1}}) - (\delta_{j i_{k}} + \delta_{j i_{k-1}} + \cdots + \delta_{j i_1})}}{q_{j}-q_{j}^{-1}}\\
    &= \frac{q_{j}^{(\delta_{j i_d} + \cdots + \delta_{j i_1})} - q_{j}^{-(\delta_{j i_d} + \cdots + \delta_{j i_1})}}{q_{j}-q_{j}^{-1}}.
  \end{align*}
  We get the desired result.
\end{proof}

\begin{prop}\label{q-cr-2}
  For any $1 \le i,j \le m+n$, we have
  \begin{align}
    L(e_i) L(e_j) &= (-1)^{\widehat{i} \widehat{j}} q_i^{\delta_{ij}} L(e_j) L(e_i) L(T_1^{\gamma(i, j)}),\label{re1}\\
    L(e_i^*) L(e_j^*) &= (-1)^{\widehat{i} \widehat{j}} q_i^{\delta_{ij}} L(T_1^{\gamma(i, j)}) L(e_j^*) L(e_i^*),\label{re2}\\
    L(e_i^*) L(e_j) &= \delta_{ij} K_i^{-1} + (-1)^{\widehat{i} \widehat{j}} L(e_j) L(T_1^{-\gamma(i, j)}) L(e_i^*).\label{re3}
  \end{align}
\end{prop}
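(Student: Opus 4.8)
The plan is to verify all three identities by direct computation on basis elements $e_I \otimes T_\sigma$ with $I = (i_d,\ldots,i_1) \in [m+n]^d$, exactly in the spirit of the proof of Lemma~\ref{q-cr-1}. Since $L(T_1^{\gamma(i,j)})$, $L(e_i)$, $L(e_j)$ and their starred versions all act trivially on the $\mathscr{H}_\infty$-component (merely shifting indices via $^{\uparrow d}$), the $T_\sigma$ factor is inert throughout and it suffices to check the identities on $\V^{\otimes d}$. The main tool is the rewriting \eqref{actionTk} of the $\mathscr{H}_d$-action together with the definitions $f_j(e_r) = q_j^{-\delta_{jr}} e_r$ and $g_j(e_r) = e_r T_1^{-\gamma(j,r)}$, which let me move a vector $e_i$ past a string of vectors while picking up the appropriate sign and $q$-power and the appropriate Hecke generators.

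For \eqref{re1}: applying $L(e_i)L(e_j)$ to $e_{i_d}\cdots e_{i_1}\otimes T_\sigma$ produces $e_i e_j e_{i_d}\cdots e_{i_1}\otimes T_\sigma$, while the right-hand side, reading from the right, first forms $e_j e_{i_d}\cdots e_{i_1}\otimes T_\sigma$, then acts by $L(T_1^{\gamma(i,j)})$ to swap the first two tensor slots (up to the scalar from \eqref{actionTk}), then prepends $e_i$; comparing scalars reduces to the single identity $(-1)^{\widehat i\widehat j} q_i^{\delta_{ij}}\cdot(-1)^{\widehat i\widehat j}q_i^{-\delta_{ij}} = 1$, i.e.\ the braid-free rank-one case, so this one is essentially immediate. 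Identity \eqref{re2} is the ``dual'' statement: here the two operators genuinely act by the derivation formula \eqref{Lej*}, so I expand $L(e_i^*)L(e_j^*)(e_{i_d}\cdots e_{i_1}\otimes T_\sigma)$ as a double sum over the positions $k$ (for $e_j^*$) and $\ell$ (for $e_i^*$), and likewise expand the right-hand side. The strategy is to match terms by the pair of annihilated positions; the subtle part is that the $g_j$-factors insert Hecke generators between the tensor slots, and commuting $L(e_i^*)$ past these requires Lemma~\ref{3t} (the $T_1$–$T_2$–$T_1$ braid identity specialized through $\gamma$) exactly as it was used in the well-definedness lemma. I expect the bookkeeping of signs $(-1)^{\widehat i\widehat j}$, $q$-powers $q_i^{\delta_{ij}}$, and the placement of $T_1^{\gamma(i,j)}$ to be the fiddly core, but each individual term transformation is governed by \eqref{actionTk} and Lemma~\ref{3t}.

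Identity \eqref{re3} is the one with an inhomogeneous term and the main obstacle. Expanding $L(e_i^*)L(e_j)(e_{i_d}\cdots e_{i_1}\otimes T_\sigma)$: first $L(e_j)$ prepends $e_j$, giving a tensor of length $d+1$ with leading entry $e_j$; then $L(e_i^*)$ sums over positions $k = 0,1,\ldots,d$ (writing $k=0$ for the new leading slot). The $k = 0$ term contributes $\inner{e_i^*,e_j} = \delta_{ij}$ times $f_i$ applied to the remaining string, i.e.\ $\delta_{ij}\prod_r q_i^{-\delta_{i i_r}} e_{i_d}\cdots e_{i_1} = \delta_{ij} K_i^{-1}$ by the computation in Proposition~\ref{q-kc}; this is precisely the first summand on the right. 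The remaining terms $k \ge 1$ should reassemble into $(-1)^{\widehat i\widehat j} L(e_j)L(T_1^{-\gamma(i,j)})L(e_i^*)$ once I commute $e_j$ (with its trailing $g$-factor) past the $e_i^*$-action; here I will use the identity $e_I.T_k^{\gamma(i_{k+1},i_k)} = e_I.(T_k^{-\gamma(i_k,i_{k+1})} - q^{\delta_{i_k,i_{k+1}}} + q^{-\delta_{i_k,i_{k+1}}})$ derived just before Section~5.3, which is the source of the sign flip $\gamma \to -\gamma$ in the Hecke exponent on the right-hand side, together with Lemma~\ref{q-cr-1} / Proposition~\ref{q-kc} to identify the leftover $K$-terms. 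The hard part is keeping the three contributions—the Kronecker term, the braid-reordering terms, and the $T_k^{-1} = T_k + (q^{-1}-q)$ correction terms—from getting tangled; I will organize the computation by first establishing the length-two ($d=1$) case by hand, then running the general case position-by-position so that each step is an instance of \eqref{actionTk}, Lemma~\ref{3t}, or Lemma~\ref{q-cr-1}.
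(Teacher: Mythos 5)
Your treatment of \eqref{re1} and \eqref{re3} is essentially the paper's: for \eqref{re1} the scalar bookkeeping is exactly the one-line check you describe, and for \eqref{re3} the paper likewise isolates the leading-slot ($k=0$) term to produce $\delta_{ij}K_i^{-1}$ and identifies the tail with $(-1)^{\widehat i\widehat j}L(e_j)L(T_1^{-\gamma(i,j)})L(e_i^*)$. For \eqref{re2}, however, your route diverges from the paper's, and I think you underestimate the difficulty of the direct double-sum expansion. The paper instead proves \eqref{re2} by \emph{induction on $d$}: writing $e_{i_d}\cdots e_{i_1}=L(e_{i_d})(e_{i_{d-1}}\cdots e_{i_1})$, it pushes $L(e_i^*)L(e_j^*)$ past $L(e_{i_d})$ using the already-established \eqref{re3}, splitting into the three cases $i_d\notin\{i,j\}$, $i_d=i$, $i_d=j$; Lemma~\ref{3t} then realigns the accumulated Hecke exponents, and in the $i_d=i$ case the factor $q_i^{\delta_{ij}}$ appears from the relation $T_1^{-\gamma(j,i)}=T_1^{\gamma(i,j)}+\delta_{ij}(-1)^{\widehat i}(q_i-q_i^{-1})$. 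This inductive reduction is not merely a cosmetic simplification: it guarantees that $L(e_i^*)$ is only ever applied to pure tensors $e_{i_{d-1}}\cdots e_{i_1}$, whereas in your plan the inner sum $L(e_j^*)(e_I\otimes T_\sigma)$ already carries inserted $g_j$-factors $T_1^{-\gamma(j,\,\cdot)}$, so applying $L(e_i^*)$ to it requires invoking the well-definedness (commutation with $\mathscr{H}_\infty$) inside every term of the outer sum, and the diagonal $q_i^{\delta_{ij}}$ contributions have no single clean source. Your approach is not wrong in principle, but it is substantially heavier than you acknowledge, and the natural way to avoid the tangle is precisely the paper's device of peeling off one leading vector at a time and reusing \eqref{re3}.
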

\begin{proof}
  The relation \eqref{re1} follows from
  \begin{align*}
    L(e_i) L(e_j) (e_{i_d}\cdots e_{i_1}) &= e_i e_j e_{i_d}\cdots e_{i_1}\\
    & = (-1)^{\widehat{i}\widehat{j} } q_{i}^{\delta_{ij}} e_j e_i T_1^{\gamma(i, j)} e_{i_d}\cdots e_{i_1}\\
    &= (-1)^{\widehat{i}\widehat{j} } q_{i}^{\delta_{ij}} L(e_j) L(e_i) L(T_1^{\gamma(i, j)}) e_{i_d}\cdots e_{i_1},
  \end{align*}
  while \eqref{re3} from
  \begin{align*}
    & L(e_i^*) L(e_j) (e_{i_d}\cdots e_{i_1})\\
    & = L(e_i^*) (e_j e_{i_d}\cdots e_{i_1})\\
    &=\langle e_i^*,e_j\rangle q_i^{-\delta_{ii_d}-\cdots-\delta_{ii_1}}e_{i_d}\cdots e_{i_1}\\
    &\qquad +\sum_{k=1}^d (-1)^{\widehat{i}(\widehat{j} + \widehat{i_d}+\cdots+\widehat{i_{k+1}})}
    \inner{e_j^*, e_{i_k}} g_i(e_{j}) g_i(e_{i_d})\cdots g_i(e_{i_{k+1}})f_i(e_{i_{k-1}})\cdots f_i(e_{i_1}) \\
    &= \inner{e_i^*, e_j} K_i^{-1}.e_{i_d}\cdots e_{i_1} \\
    &\qquad+ \sum_{k=1}^d (-1)^{\widehat{i}(\widehat{j} + \widehat{i_d}+\cdots+\widehat{i_{k+1}})}
    \inner{e_j^*, e_{i_k}} g_i(e_{j}) g_i(e_{i_d})\cdots g_i(e_{i_{k+1}})f_i(e_{i_{k-1}})\cdots f_i(e_{i_1}) \\
    &= \big(\delta_{ij} K_i^{-1} + (-1)^{\widehat{i} \widehat{j}} L(e_j) L(T_1^{-\gamma(i, j)}) L(e_i^*)\big) (e_{i_d}\cdots e_{i_1}).
  \end{align*}

Now let us prove \eqref{re2}, which is equivalent to \begin{equation}
  \label{re21}
  L(e_i^*) L(e_j^*)|_{\widetilde{T}_d(\V)}=(-1)^{\widehat{i} \widehat{j}} q_i^{\delta_{ij}} L(T_1^{\gamma(i, j)}) L(e_j^*) L(e_i^*)|_{\widetilde{T}_d(\V)},
\end{equation}for $d=0,1,2,\ldots$. We shall prove \eqref{re21} by induction on $d$. It is trivial for $d=0,1$. Take any $e_{i_d}\cdots e_{i_1}\otimes T_\sigma\in\widetilde{T}_d(\V)$. If $i_d=k\neq i,j$, then we have
\begin{align*}
&L(e_i^*) L(e_j^*)(e_{i_d}\cdots e_{i_1}\otimes T_\sigma)\\
&=L(e_i^*) L(e_j^*)L(e_k)(e_{i_{d-1}}\cdots e_{i_1}\otimes T_\sigma)\\
&=L(e_i^*)(-1)^{\widehat{j}\widehat{k}}L(e_k)L(T_1^{-\gamma(j,k)})L(e_j^*)(e_{i_{d-1}}\cdots e_{i_1}\otimes T_\sigma)\\
&=(-1)^{(\widehat{i}+\widehat{j})\widehat{k}}L(e_k)L(T_1^{-\gamma(i,k)})L(e_i^*)L(T_1^{-\gamma(j,k)})L(e_j^*)(e_{i_{d-1}}\cdots e_{i_1}\otimes T_\sigma)\\
&=(-1)^{(\widehat{i}+\widehat{j})\widehat{k}}L(e_k)L(T_1^{-\gamma(i,k)}T_2^{-\gamma(j,k)})L(e_i^*)L(e_j^*)(e_{i_{d-1}}\cdots e_{i_1}\otimes T_\sigma)\\
&=(-1)^{(\widehat{i}+\widehat{j})\widehat{k}+\widehat{i}\widehat{j}}q_i^{\delta_{ij}}L(e_k)L(T_1^{-\gamma(i,k)}T_2^{-\gamma(j,k)}T_1^{\gamma(i,j)})L(e_j^*)L(e_i^*)(e_{i_{d-1}}\cdots e_{i_1}\otimes T_\sigma)\\
&=(-1)^{(\widehat{i}+\widehat{j})\widehat{k}+\widehat{i}\widehat{j}}q_i^{\delta_{ij}}L(e_k)L(T_2^{\gamma(i,j)}T_1^{-\gamma(j,k)}T_2^{-\gamma(i,k)})L(e_j^*)L(e_i^*)(e_{i_{d-1}}\cdots e_{i_1}\otimes T_\sigma) \quad \mbox{(by \eqref{3t})}\\
&=(-1)^{(\widehat{i}+\widehat{j})\widehat{k}+\widehat{i}\widehat{j}}q_i^{\delta_{ij}}L(T_1^{\gamma(i,j)})L(e_k)L(T_1^{-\gamma(j,k)}T_2^{-\gamma(i,k)})L(e_j^*)L(e_i^*)(e_{i_{d-1}}\cdots e_{i_1}\otimes T_\sigma)\\
&=(-1)^{\widehat{i} \widehat{j}} q_i^{\delta_{ij}} L(T_1^{\gamma(i,j)}) L(e_j^*) L(e_i^*)L(e_k)(e_{i_{d-1}}\cdots e_{i_1}\otimes T_\sigma).
\end{align*}

If $i_d=i$, then
\begin{align*}
    &L(e_i^*) L(e_j^*) (e_{i_d} \cdots e_{i_1})\\
    &= L(e_i^*) L(e_j^*) L(e_i) (e_{i_{d-1}} \cdots e_{i_1})\\
    &= L(e_i^*) (\delta_{ij} K_j^{-1} + (-1)^{\widehat{i} \widehat{j}} L(e_i) L(T_1^{-\gamma(j, i)}) L(e_j^*))
    (e_{i_{d-1}} \cdots e_{i_1})\\
    &= (\delta_{ij} L(e_i^*) K_j^{-1} + (-1)^{\widehat{i} \widehat{j}} L(e_i^*) L(e_i) L(T_1^{-\gamma(j, i)}) L(e_j^*))
    (e_{i_{d-1}} \cdots e_{i_1})\\
    &= (\delta_{ij} L(e_i^*) K_j^{-1} + (-1)^{\widehat{i} \widehat{j}} K_i^{-1} L(T_1^{-\gamma(j, i)}) L(e_j^*)\\
    &\phantom{=}\ + (-1)^{\widehat{i} (\widehat{j} + \widehat{i})} L(e_i) L(T_1 T_2^{-\gamma(j, i)}) L(e_i^*)  L(e_j^*))
    (e_{i_{d-1}} \cdots e_{i_1})\\
    &= ((-1)^{\widehat{i} \widehat{j}} (L(T_1^{-\gamma(j, i)}) + (-1)^{\widehat{i} \widehat{j}}\delta_{ij} q_i^{-1}) K_i^{-1}  L(e_j^*)
     \\ &\phantom{=}\ + (-1)^{\widehat{i} (\widehat{j} + \widehat{i})} L(e_i) L(T_1 T_2^{-\gamma(j, i)}) L(e_i^*)  L(e_j^*))
    (e_{i_{d-1}} \cdots e_{i_1})\\
    &= ((-1)^{\widehat{i} \widehat{j}}
    (L(T_1^{\gamma(i, j)}) + \delta_{i j}(-1)^{\widehat{i}}(q_i - q_i^{-1}) + \delta_{ij}(-1)^{\widehat{i}} q_i^{-1}) K_i^{-1}  L(e_j^*)
    \\ &\phantom{=}\ + (-1)^{\widehat{i} (\widehat{j} + \widehat{i})} L(e_i) L(T_1 T_2^{-\gamma(j, i)}) L(e_i^*)  L(e_j^*))
    (e_{i_{d-1}} \cdots e_{i_1}) \\
    &= ((-1)^{\widehat{i} \widehat{j}} (L(T_1^{\gamma(i, j)}) + \delta_{i j}(-1)^{\widehat{i}}q_i) K_i^{-1}  L(e_j^*)
    \\ &\phantom{=}\  + (-1)^{\widehat{i} (\widehat{j} + \widehat{i})} L(e_i) L(T_1 T_2^{-\gamma(j, i)}) L(e_i^*)  L(e_j^*))
    (e_{i_{d-1}} \cdots e_{i_1})\\
    &= ((-1)^{\widehat{i} \widehat{j}} (L(T_1^{\gamma(i, j)})K_i^{-1}  L(e_j^*)) + \delta_{i j}q_i K_j^{-1} L(e_i^*)
    \\ &\phantom{=}\  + (-1)^{\widehat{i}} q_i^{\delta_{ij}} L(e_i) L(T_1 T_2^{-\gamma(j, i)} T_1^{\gamma(i, j)}) L(e_j^*) L(e_i^*))
    (e_{i_{d-1}} \cdots e_{i_1}),
\end{align*}
and
\begin{align*}
    & (-1)^{\widehat{i} \widehat{j}} q_i^{\delta_{ij}} L(T_1^{\gamma(i, j)}) L(e_j^*) L(e_i^*) L(e_i)\\
    &= (-1)^{\widehat{i} \widehat{j}} q_i^{\delta_{ij}} L(T_1^{\gamma(i, j)}) L(e_j^*) K_i^{-1}
    + (-1)^{\widehat{i} (\widehat{j} + \widehat{i})} q_i^{\delta_{ij}} L(T_1^{\gamma(i, j)}) L(e_j^*)  L(e_i) L(T_1) L(e_i^*)\\
    &= (-1)^{\widehat{i} \widehat{j}} L(T_1^{\gamma(i, j)}) K_i^{-1} L(e_j^*)
    + (-1)^{\widehat{i} (\widehat{j} + \widehat{i})} q_i^{\delta_{ij}} L(T_1^{\gamma(i, j)})
    (\delta_{ij} K_j^{-1}\\
     &\phantom{=}\ + (-1)^{\widehat{i} \widehat{j}} L(e_i) L(T_1^{-\gamma(j, i)}) L(e_j^*)) L(T_1) L(e_i^*)\\
    &= (-1)^{\widehat{i} \widehat{j}} L(T_1^{\gamma(i, j)}) K_i^{-1} L(e_j^*)
    + \delta_{ij} q_i^{\delta_{ij}} K_j^{-1} L(e_i^*)
    \\&\phantom{=}\ + (-1)^{\widehat{i}} q_i^{\delta_{ij}} L(e_i) L(T_2^{\gamma(i, j)} T_1^{-\gamma(j, i)} T_2) L(e_j^*) L(e_i^*) \\
    &= (-1)^{\widehat{i} \widehat{j}} L(T_1^{\gamma(i, j)}) K_i^{-1} L(e_j^*)
    + \delta_{ij} q_i K_j^{-1} L(e_i^*)
   \\&\phantom{=}\ + (-1)^{\widehat{i}} q_i^{\delta_{ij}} L(e_i) L(T_1 T_2^{-\gamma(j, i)} T_1^{\gamma(i, j)}) L(e_j^*) L(e_i^*) \quad \mbox{(by \ref{3t})}.
\end{align*}

If $i_d=j$, we can get the result similarly.
\end{proof}

\subsection{The representations of $U_q(\mathfrak{gl}(m|n))$ on $\widetilde{T}(\V)$}

For $1 \le i < m+n$ and $d\in\mathbb{N}$, we denote $$A_{i}^{(d)}=L(e_i)L(e_{i+1}^*)|_{\V^{\otimes d}},$$ and $$B_{i}^{(d)}=L(e_{i+1})L(e_i^*)|_{\V^{\otimes d}}.$$
Let $\sigma: \V \to \V$ be the linear endomorphism defined by $\sigma(e_k) = (-1)^{\widehat{k}}e_k$ for $1\leq k\leq m+n$.
\begin{lem}
We have
$$A_i^{(d)} = A_i^{(1)} \otimes K_{i+1}^{-1} + \sigma^{\delta_{im}} K_i^{-1} \otimes A_i^{(d-1)},$$ and
    $$B_i^{(d)} = B_i^{(1)} \otimes K_i + \sigma^{\delta_{im}} K_{i+1} \otimes B_i^{(d-1)}.$$
\end{lem}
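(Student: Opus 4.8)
The plan is to evaluate both identities on a general split tensor $e_r\otimes w$ with $r\in[m+n]$ and $w\in\V^{\otimes d-1}$ (the first tensor factor being the $d$-th slot), and to argue by induction on $d$, it being part of the inductive claim that $A_i^{(d)}$ and $B_i^{(d)}$ are well-defined endomorphisms of $\V^{\otimes d}$; the case $d=1$ is trivial because $L(e_{i+1}^*)$ and $L(e_i^*)$ vanish on $\widetilde{T}_0(\V)$. Two conventions are in force: the symbol $\otimes$ between operators in the statement carries no Koszul sign, so $(X\otimes Y)(e_r\otimes w)=(Xe_r)\otimes(Yw)$, the parity bookkeeping being absorbed into the factors $\sigma^{\delta_{im}}$; and we shall freely use the two elementary identities $\gamma(i,r)-\gamma(i+1,r)=-2\delta_{ir}$ and $\widehat{i}+\widehat{i+1}\equiv\delta_{im}\pmod 2$, valid for $1\le i<m+n$, together with $K_j^{-1}e_r=q_j^{-\delta_{jr}}e_r$, $\sigma^{\delta_{im}}e_r=(-1)^{\delta_{im}\widehat r}e_r$, $A_i^{(1)}e_r=\delta_{i+1,r}e_i$, and $B_i^{(1)}e_r=\delta_{ir}e_{i+1}$, all read off from the definitions.

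For the first identity I would write $e_r\otimes w=L(e_r)(w)$, so $A_i^{(d)}(e_r\otimes w)=L(e_i)L(e_{i+1}^*)L(e_r)(w)$; then apply \eqref{re3} to $L(e_{i+1}^*)L(e_r)$, next \eqref{re1} to the $L(e_i)L(e_r)$ appearing in the resulting second summand, and contract the $T_1$-powers via $L(T_1^{\gamma(i,r)})L(T_1^{-\gamma(i+1,r)})=L(T_1^{-2\delta_{ir}})$, obtaining
\[
  A_i^{(d)}(e_r\otimes w)=\delta_{i+1,r}\,L(e_i)K_{i+1}^{-1}(w)+(-1)^{(\widehat i+\widehat{i+1})\widehat r}q_i^{\delta_{ir}}\,L(e_r)L(e_i)L(T_1^{-2\delta_{ir}})L(e_{i+1}^*)(w).
\]
If $r\ne i$, the central $T_1$-power is $1$, $K_{i+1}^{-1}$ keeps $w$ inside $\V^{\otimes d-1}$, and $L(e_i)L(e_{i+1}^*)(w)=A_i^{(d-1)}(w)$; simplifying the scalar by the parity identity and by $K_i^{-1}e_r=e_r$ rewrites the right-hand side as $(A_i^{(1)}e_r)\otimes(K_{i+1}^{-1}w)+(\sigma^{\delta_{im}}K_i^{-1}e_r)\otimes(A_i^{(d-1)}w)$, which is the claim. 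If $r=i$, the first summand drops ($A_i^{(1)}e_i=0$), the scalar equals $q_i$, and one is left with $q_i\,L(e_i)L(e_i)L(T_1^{-2})L(e_{i+1}^*)(w)$; applying twice the relation $L(e_i)L(e_i)L(T_1^{-1})=(-1)^{\widehat i}q_i^{-1}L(e_i)L(e_i)$ (the case $j=i$ of \eqref{re1}) collapses this to $q_i^{-1}e_i\otimes(A_i^{(d-1)}w)=(\sigma^{\delta_{im}}K_i^{-1}e_i)\otimes(A_i^{(d-1)}w)$.

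The second identity would be proved the same way, starting from $B_i^{(d)}(e_r\otimes w)=L(e_{i+1})L(e_i^*)L(e_r)(w)$, applying \eqref{re3} to $L(e_i^*)L(e_r)$ and \eqref{re1} to $L(e_{i+1})L(e_r)$, which leaves the central factor $L(T_1^{\gamma(i+1,r)-\gamma(i,r)})=L(T_1^{2\delta_{ir}})$. For $r\ne i$ this runs exactly as before and produces $(\sigma^{\delta_{im}}K_{i+1}e_r)\otimes(B_i^{(d-1)}w)$, with $B_i^{(1)}e_r=0$. The one genuinely new ingredient is the case $r=i$: here the central factor is $L(T_1^{2})$, which I would expand as $(q-q^{-1})L(T_1)+\mathrm{id}$ using $(T_1-q)(T_1+q^{-1})=0$. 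The $\mathrm{id}$-part gives $L(e_i)L(e_{i+1})L(e_i^*)(w)=e_i\otimes(B_i^{(d-1)}w)$, while in the $(q-q^{-1})L(T_1)$-part I would push $L(T_1)$ to the left by \eqref{re1} and then invoke Lemma \ref{q-cr-1} in the form $L(e_i)L(e_i^*)=(K_i-K_i^{-1})/(q_i-q_i^{-1})$; the leftover scalar $(q-q^{-1})(-1)^{\widehat i\widehat{i+1}}/(q_i-q_i^{-1})$ equals $1$, so this part is $e_{i+1}\otimes(K_i-K_i^{-1})w$, and combined with the surviving first summand $e_{i+1}\otimes K_i^{-1}w$ it yields $e_{i+1}\otimes K_iw+e_i\otimes(B_i^{(d-1)}w)=(B_i^{(1)}e_i)\otimes(K_iw)+(\sigma^{\delta_{im}}K_{i+1}e_i)\otimes(B_i^{(d-1)}w)$.

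I expect the only real obstacle to be the degenerate case $r=i$ — the diagonal contributions in which the index contracted by $L(e_{i+1}^*)$ (resp.\ $L(e_i^*)$) equals $i$. There the two commutation relations fail to cancel the Hecke factors and leave a $T_1^{\pm2}$, so one must feed in extra structural input (the quadratic relation $L(e_i)L(e_i)L(T_1^{-1})=(-1)^{\widehat i}q_i^{-1}L(e_i)L(e_i)$ for $A$, the contraction formula of Lemma \ref{q-cr-1} for $B$) and then check that the accumulated scalars in $\mathbb{C}(q)$, which depend on $\widehat i$ through $q_i=q^{(-1)^{\widehat i}}$, collapse as stated; the rest is bookkeeping with the relations of Proposition \ref{q-cr-2} and the two elementary $\gamma$- and parity-identities.
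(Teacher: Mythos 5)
Your proposal is correct and follows essentially the same route as the paper: peel off $L(e_r)$, apply \eqref{re3} then \eqref{re1}, collapse the two $T_1$-powers to $T_1^{\mp 2\delta_{ir}}$ via $\gamma(i+1,r)-\gamma(i,r)=2\delta_{ir}$, and collect the parity sign into $\sigma^{\delta_{im}}$. The only place you go slightly beyond the paper's text is the degenerate $r=i$ case for $B_i^{(d)}$ — the paper simply says ``in a similar way'' — where you correctly observe that, unlike the $A$-case (where $e_ie_iT_1^{-2}=q_i^{-2}e_ie_i$ disposes of the Hecke factor), one must expand $T_1^2=(q-q^{-1})T_1+1$ and invoke Lemma~\ref{q-cr-1}; your scalar bookkeeping there checks out.
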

\begin{proof}
  For $I = (i_{d-1},\ldots,i_2,i_1) \in [m+n]^{d-1}$ and $k \in [m+n]$, by direct calculation, we have
  \begin{align*}
      &A_i^{(d)}(e_k e_{i_{d-1}} \cdots e_{i_1})\\
      &=L(e_i)L(e_{i+1}^*)(e_k e_{i_{d-1}} \cdots e_{i_1})\\
      &= L(e_i)L(e_{i+1}^*)L(e_k) (e_{i_{d-1}} \cdots e_{i_1})\\
      &= L(e_i)(\delta_{i+1, k}K_{i+1}^{-1}
       + (-1)^{\widehat{i+1}\widehat{k}}L(e_k)L(T_1^{-\gamma(i+1, k)})L(e_{i+1}^*))(e_{i_{d-1}} \cdots e_{i_1})\\
      &= L(\delta_{i+1, k} e_i)\otimes K_{i+1}^{-1}(e_{i_{d-1}} \cdots e_{i_1})
      +(-1)^{\widehat{i+1}\widehat{k}}L(e_i)L(e_k)L(T_1^{-\gamma(i+1, k)})L(e_{i+1}^*)(e_{i_{d-1}} \cdots e_{i_1})\\
      &= A_i^{(1)}(e_k) \otimes  K_{i+1}^{-1} (e_{i_{d-1}} \cdots e_{i_1}) +
      (-1)^{\widehat{i+1}\widehat{k}}L(e_i)L(e_k)L(T_1^{-\gamma(i, k)-2\delta_{i, k}})L(e_{i+1}^*)(e_{i_{d-1}} \cdots e_{i_1})\\
      &= (A_i^{(1)} \otimes K_{i+1}^{-1}) (e_k e_{i_{d-1}} \cdots e_{i_1}) +
      (-1)^{\widehat{i+1}\widehat{k}}(-1)^{\widehat{i}\widehat{k}} q_{i}^{\delta_{i, k}}
      L(e_ke_i (T_1^{-2})^{\delta_{i, k}})L(e_{i+1}^*)(e_{i_{d-1}} \cdots e_{i_1})\\
      &= (A_i^{(1)} \otimes K_{i+1}^{-1}) (e_k e_{i_{d-1}} \cdots e_{i_1}) +
      (-1)^{\widehat{k}\delta_{i m}} q_{i}^{-\delta_{i, k}}
      L(e_k)L(e_i)L(e_{i+1}^*)(e_{i_{d-1}} \cdots e_{i_1})\\
      &= (A_i^{(1)} \otimes K_{i+1}^{-1}) (e_k e_{i_{d-1}} \cdots e_{i_1}) +
      (\sigma^{\delta_{i, m}} K_i^{-1})(e_k) \otimes A_i^{(d-1)} (e_{i_{d-1}} \cdots e_{i_1})\\
      &= (A_i^{(1)} \otimes K_{i+1}^{-1} + \sigma^{\delta_{i, m}} K_i^{-1} \otimes A_i^{(d-1)})(e_k e_{i_{d-1}} \cdots e_{i_1}).
  \end{align*}
The formula for $B_i^{(d)}$ can be proved in a similar way.
\end{proof}

\begin{lem}{\label{q-cr-3}}
  For $1 \le i < m+n$, we have
$$E_i K_i^{-1} = L(e_i)L(e_{i+1}^*)|_{\V^{\otimes d}},$$ and $$ K_i F_i = L(e_{i+1})L(e_i^*)|_{\V^{\otimes d}}.
$$
\end{lem}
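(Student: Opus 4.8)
The plan is to prove both identities by induction on $d$, playing the two preceding lemmas against each other. Recall that the $\mathbf{U}_{m|n}$-module structure on $\V^{\otimes d}$ is the one obtained through $\Delta^{(d-1)}$, so the identity $E_iK_i^{-1}=L(e_i)L(e_{i+1}^*)|_{\V^{\otimes d}}$ amounts to $\Delta^{(d-1)}(E_iK_i^{-1})|_{\V^{\otimes d}}=A_i^{(d)}$, and likewise $\Delta^{(d-1)}(K_iF_i)|_{\V^{\otimes d}}=B_i^{(d)}$. The inputs are: the operator recursions
$$A_i^{(d)}=A_i^{(1)}\otimes K_{i+1}^{-1}+\sigma^{\delta_{im}}K_i^{-1}\otimes A_i^{(d-1)},\qquad B_i^{(d)}=B_i^{(1)}\otimes K_i+\sigma^{\delta_{im}}K_{i+1}\otimes B_i^{(d-1)}$$
from the lemma just proved; the comultiplication recursions
$$\Delta^{(d)}(E_i)=1\otimes\Delta^{(d-1)}(E_i)+E_i\otimes(K_iK_{i+1}^{-1})^{\otimes d},\qquad \Delta^{(d)}(F_i)=F_i\otimes 1^{\otimes d}+K_{i+1}K_i^{-1}\otimes\Delta^{(d-1)}(F_i)$$
together with $\Delta^{(d)}(K_i^{\pm1})=(K_i^{\pm1})^{\otimes d+1}$; and the parity information $|E_i|=|F_i|=\delta_{im}$.

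First I would check the base case $d=1$ directly from $E_i.e_j=\delta_{i+1,j}e_i$, $F_i.e_j=\delta_{i,j}e_{i+1}$ and $K_i^{\pm1}.e_j=q_i^{\pm\delta_{ij}}e_j$: one gets $E_iK_i^{-1}.e_j=\delta_{i+1,j}e_i$ and $K_iF_i.e_j=\delta_{i,j}e_{i+1}$ (the $q$-powers drop out since $\delta_{i,i+1}=0$), which are exactly $A_i^{(1)}(e_j)$ and $B_i^{(1)}(e_j)$, using that $L(e_{i+1}^*)(e_j)=\delta_{i+1,j}$ and $L(e_i^*)(e_j)=\delta_{i,j}$ in degree $0$.

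For the inductive step I would multiply out $\Delta^{(d-1)}(E_iK_i^{-1})=\Delta^{(d-1)}(E_i)\,\Delta^{(d-1)}(K_i^{-1})$ using the recursions and cancel the $K_i$-powers factorwise, obtaining
$$\Delta^{(d-1)}(E_iK_i^{-1})=K_i^{-1}\otimes\Delta^{(d-2)}(E_iK_i^{-1})+E_iK_i^{-1}\otimes(K_{i+1}^{-1})^{\otimes d-1},$$
and similarly $\Delta^{(d-1)}(K_iF_i)=K_iF_i\otimes K_i^{\otimes d-1}+K_{i+1}\otimes\Delta^{(d-2)}(K_iF_i)$. Now let these act on $\V^{\otimes d}=\V\otimes\V^{\otimes d-1}$, with the newly adjoined slot on the left as in the paper's decreasing-index convention, via the super-rule $(a\otimes b)(v\otimes w)=(-1)^{|b|\,|v|}\,av\otimes bw$. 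In the term whose right-hand tensorand has parity $|E_i|=\delta_{im}$ (resp. $|F_i|=\delta_{im}$), the sign $(-1)^{\delta_{im}\,|v|}$ on the left factor is precisely the action of $\sigma^{\delta_{im}}$, since $\sigma(e_k)=(-1)^{\widehat{k}}e_k$; this is exactly where the $\sigma^{\delta_{im}}$-twists originate. Plugging in the inductive hypotheses $\Delta^{(d-2)}(E_iK_i^{-1})|_{\V^{\otimes d-1}}=A_i^{(d-1)}$ and $\Delta^{(d-2)}(K_iF_i)|_{\V^{\otimes d-1}}=B_i^{(d-1)}$, the base case on the single left factor, and $\Delta^{(d-2)}(K_i^{\pm1})|_{\V^{\otimes d-1}}=K_i^{\pm1}$, we read off exactly $A_i^{(1)}\otimes K_{i+1}^{-1}+\sigma^{\delta_{im}}K_i^{-1}\otimes A_i^{(d-1)}$ and $B_i^{(1)}\otimes K_i+\sigma^{\delta_{im}}K_{i+1}\otimes B_i^{(d-1)}$, which by the preceding lemma are $A_i^{(d)}$ and $B_i^{(d)}$. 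This closes the induction.

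The main thing to be careful about is the bookkeeping: one must match the "new" tensor slot in the comultiplication recursion with the "new" slot in the recursion for $A_i^{(d)},B_i^{(d)}$ — both live on the left in the conventions of the paper — and verify that the Koszul signs produced when $i=m$ by the odd generators $E_m,F_m$ reassemble into precisely the operator $\sigma^{\delta_{im}}$ and nothing more. Once the slots and the signs are aligned, the rest is the routine cancellation of $K_i$-powers indicated above.
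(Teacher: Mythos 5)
Your proof is correct and follows essentially the same route as the paper: induction on $d$, with the base case $d=1$ checked directly, and the inductive step obtained by expanding $\Delta^{(d-1)}(E_iK_i^{-1})=\Delta^{(d-1)}(E_i)\Delta^{(d-1)}(K_i^{-1})$ (resp.\ $\Delta^{(d-1)}(K_iF_i)$) via the comultiplication recursion, applying the inductive hypothesis, identifying the Koszul sign with $\sigma^{\delta_{im}}$, and matching the result against the recursion $A_i^{(d)}=A_i^{(1)}\otimes K_{i+1}^{-1}+\sigma^{\delta_{im}}K_i^{-1}\otimes A_i^{(d-1)}$ (resp.\ the one for $B_i^{(d)}$) from the preceding lemma. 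Your sign and slot bookkeeping is careful and in fact slightly more explicit than the paper's; nothing else needs to change.
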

\begin{proof}
 We only prove the first equation. The second one can be proved similarly. We use induction on $d$. It is clear for $d = 1$. Suppose the equation holds for $d=r-1$, then
  \begin{align*}
    &E_i K_i^{-1}(e_{i_r} e_{i_{r-1}} \cdots e_{i_1})\\
    &=(\Delta^{(r-1)}(E_i) \Delta^{(r-1)}(K_i^{-1}))(e_{i_r} e_{i_{r-1}} \cdots e_{i_1}) \\
    &=((1 \otimes \Delta^{(r-2)}(E_i) + E_i \otimes (K_iK_{i+1}^{-1})^{\otimes r})((K_i^{-1})^{\otimes r+1}))(e_{i_r} e_{i_{r-1}} \cdots e_{i_1})\\
    &=(K_i^{-1} \otimes \Delta^{(r-2)}(E_i K_i^{-1}) + E_i K_i^{-1} \otimes (K_{i+1}^{-1})^{\otimes r}) (e_{i_r} e_{i_{r-1}} \cdots e_{i_1})\\
    &= ((-1)^{|E_i|\widehat{i_r}} K_i^{-1} \otimes A_i^{(r-1)}+A_i^{(1)} \otimes K_{i+1}^{-1})(e_{i_r} e_{i_{r-1}} \cdots e_{i_1})\\
    &= (A_i^{(1)} \otimes K_{i+1}^{-1} + (-1)^{\delta_{im}\widehat{i_r}} K_i^{-1} \otimes A_i^{(r-1)})(e_{i_r} e_{i_{r-1}} \cdots e_{i_1})\\
    &= A_i^{(r)} (e_{i_r} e_{i_{r-1}} \cdots e_{i_1})\\
    & = L(e_i)L(e_{i+1}^*)(e_{i_r} e_{i_{r-1}} \cdots e_{i_1}),
  \end{align*}
  where the forth equality follows from the inductive hypothesis, and the sign comes from the super-action of $E_i K_i^{-1}$.\end{proof}

We further denote $E_{i,i+1}=E_i$, $E_{i+1,i}=F_i$, and
$$E_{ij}=E_{ik}E_{kj}-q_k E_{kj}E_{ik},\quad E_{ji}=E_{jk}E_{ki}-q_k^{-1} E_{ki}E_{jk},\quad (i<k<j).$$

\begin{thm}\label{q-cr-4}
  For any $1 \le i < j \le m+n$, we have
  \[E_{ij}K_i^{-1} = L(e_i)L(e_j^*)|_{\V^{\otimes d}},\] and \[K_j E_{ij} = L(e_j)L(e_i^*)|_{\V^{\otimes d}}.\]
\end{thm}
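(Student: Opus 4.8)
The plan is to prove both identities simultaneously by induction on $j - i$, using Lemma~\ref{q-cr-3} as the base case ($j = i+1$) and the definition $E_{ij} = E_{ik}E_{kj} - q_k E_{kj}E_{ik}$ (with $i < k < j$) to pass from smaller gaps to larger ones. Concretely, I would pick the intermediate index $k = i+1$ (or $k = j-1$), so that $E_{ik} = E_i$ is a ``short'' root vector covered by the base case and $E_{kj}$ has strictly smaller gap, hence is covered by the inductive hypothesis. The identity to establish is $E_{ij}K_i^{-1} = L(e_i)L(e_j^*)$ on $\V^{\otimes d}$, and in parallel $K_j E_{ij} = L(e_j)L(e_i^*)$; keeping both in the induction is essential because $E_{ij}$ is defined via a commutator that mixes $E_{ik}$ and $E_{kj}$, and unwinding it will naturally produce both ``$L(e_\bullet)L(e_\bullet^*)$'' shapes after commuting $K$'s past the $L$'s.

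The first step is to translate $E_{ij}K_i^{-1}$ into $L$-operators. Write $E_{ij}K_i^{-1} = (E_{ik}E_{kj} - q_k E_{kj}E_{ik})K_i^{-1}$ and insert $K_k^{\pm1}K_k^{\mp1}$ appropriately so that each factor can be replaced using the inductive hypothesis: $E_{ik}K_i^{-1} = L(e_i)L(e_k^*)$ and $E_{kj}K_k^{-1} = L(e_k)L(e_j^*)$, and likewise $K_k E_{ik} = L(e_k)L(e_i^*)$, $K_j E_{kj} = L(e_j)L(e_k^*)$. Using the commutation relations from Proposition~\ref{q-kc} ($K_\ell L(e_r) = q_\ell^{\delta_{\ell r}} L(e_r) K_\ell$, $K_\ell L(e_r^*) = q_\ell^{-\delta_{\ell r}} L(e_r^*) K_\ell$) I can move all the $K$'s to one side and collect the scalar $q$-powers. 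After this bookkeeping, the expression becomes a difference of two words in $L(e_\bullet)$ and $L(e_\bullet^*)$, something like $L(e_i)L(e_k^*)L(e_k)L(e_j^*) - (\text{scalar})\,L(e_k)L(e_j^*)L(e_i)L(e_k^*)$ up to $K$-conjugation factors. Then apply the key commutation relation \eqref{re3} of Proposition~\ref{q-cr-2}, namely $L(e_k^*)L(e_k) = \delta_{kk}K_k^{-1} + (-1)^{\widehat k}L(e_k)L(T_1^{-\gamma(k,k)})L(e_k^*)$, i.e.\ $L(e_k^*)L(e_k) = K_k^{-1} + (-1)^{\widehat k}L(e_k)L(T_1)L(e_k^*)$ (since $\gamma(k,k) = -1$), to contract the middle $L(e_k^*)L(e_k)$. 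The $K_k^{-1}$ term should, after reorganizing, combine with the $K$-conjugation factors to yield exactly $L(e_i)L(e_j^*)K_i^{-1}$ times $K_i$, i.e.\ the desired right-hand side; the leftover $L(e_k)L(T_1)L(e_k^*)$ term must cancel against the second word in the commutator.

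The second identity $K_j E_{ij} = L(e_j)L(e_i^*)$ is handled the same way, expanding $K_j E_{ij} = K_j(E_{ik}E_{kj} - q_k E_{kj}E_{ik})$, inserting $K_k^{\pm 1}$, and applying the inductive hypotheses together with Proposition~\ref{q-kc}; here the relevant contraction is again via \eqref{re3} but now in the form $L(e_i^*)L(e_i)$ appearing in the middle, and one should also invoke Lemma~\ref{q-cr-1} ($L(e_j)L(e_j^*) = (K_j - K_j^{-1})/(q_j - q_j^{-1})$) is probably not needed, but the relation $L(e_i^*)L(e_k) = (-1)^{\widehat i\widehat k}L(e_k)L(T_1^{-\gamma(i,k)})L(e_i^*)$ from \eqref{re3} with $i \ne k$ (so $\delta_{ik} = 0$) is used to slide $L(e_i^*)$ leftward past $L(e_k)$. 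One subtlety: these are identities on $\V^{\otimes d}$ for all $d$, and since $L$-operators and $K$'s all preserve the tensor degree, it suffices to check on each $\V^{\otimes d}$, which is exactly what the inductive set-up does.

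The main obstacle I anticipate is the sign and $q$-power bookkeeping: the definition of $E_{ij}$ carries a $q_k$ in the commutator, the inductive hypotheses each contribute $K$-conjugation factors that must be commuted past several $L$'s picking up $q^{\pm\delta}$ factors, and the super-signs $(-1)^{\widehat i\widehat k}$ from \eqref{re3} must all be tracked. The conceptual content is light — it is just: replace by $L$'s via induction, slide $K$'s, contract the middle pair via \eqref{re3}, and watch the $K^{-1}$ term produce the answer while the other term cancels the commutator's second half — but making the scalars match exactly (including the choice between $q_k, q_i, q_j$ and the parities $\widehat i, \widehat k, \widehat j$) is where the real work lies. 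A secondary point requiring care is verifying that the definition of $E_{ij}$ is independent of the choice of intermediate $k$, or else fixing $k = i+1$ throughout and checking that the resulting $E_{ij}$ agrees with whatever normalization makes the quantum Serre relations hold; I would simply fix $k = i+1$ and not belabor independence, since only this one identity is needed downstream.
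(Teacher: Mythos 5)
Your proposal is essentially the paper's proof: induction on $j-i$, base case from Lemma~\ref{q-cr-3}, expand $E_{ij}$ via its recursive definition with intermediate $k=j-1$, insert $K_{j-1}^{\pm1}$, apply the inductive hypothesis and Proposition~\ref{q-kc}, and use relations \eqref{re1}--\eqref{re3} to rearrange the resulting four-letter $L$-word so that the unwanted piece cancels and $L(e_i)L(e_j^*)K_{j-1}^{-1}K_{j-1}$ remains. One minor inaccuracy: carrying both identities in the induction is not actually ``essential'' --- with the choice $k=j-1$ the proof of $E_{ij}K_i^{-1}=L(e_i)L(e_j^*)$ uses only the first-identity inductive hypothesis together with the base case $E_{(j-1)j}K_{j-1}^{-1}=L(e_{j-1})L(e_j^*)$, and the second identity is established independently by the symmetric argument.
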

\begin{proof}As above, we only prove the first equation. We use induction on $j-i$. It has been showed in Lemma~\ref{q-cr-3} for the case $j-i = 1$.
  On $\V^{\otimes d}$, suppose $E_{ik}K_i^{-1} = L(e_i)L(e_k^*)$ for $ i < k < j$. Then
  \begin{align*}
    &E_{ij}K_i^{-1} = E_{ij}K_i^{-1} K_{j-1}^{-1} K_{j-1}\\
    &= (E_{i(j-1)}E_{(j-1)j} - q_{j-1} E_{(j-1)j} E_{i(j-1)})(K_i^{-1}K_{j-1}^{-1})K_{j-1}\\
    &= (E_{i(j-1)}K_i^{-1}E_{(j-1)j} K_{j-1}^{-1} - q_{j-1} E_{(j-1)j} E_{i(j-1)} K_i^{-1}K_{j-1}^{-1})K_{j-1}\\
    &= (L(e_i)L(e_{j-1}^*) L(e_{j-1})L(e_j^*)-
     q_{j-1}E_{(j-1)j} L(e_i)L(e_{j-1}^*) K_{j-1}^{-1})K_{j-1}\\
    &= (L(e_i)L(e_{j-1}^*) L(e_{j-1})L(e_j^*)-E_{(j-1)j}K_{j-1}^{-1}) L(e_i)L(e_{j-1}^*)K_{j-1}.
  \end{align*}
  Moreover,
  \begin{align*}
    &(E_{(j-1)j} K_{j-1}^{-1})L(e_i)L(e_{j-1}^*)\\
    & = L(e_{j-1}) L(e_j^*) L(e_i) L(e_{j-1}^*)\\
    &= (-1)^{\widehat{i} \widehat{j}} L(e_{j-1}) L(e_i)L(T_1^{-1}) L(e_j^*) L(e_{j-1}^*)\\
    &= (-1)^{\widehat{i} \widehat{j} + \widehat{i} \widehat{j-1} + \widehat{j} \widehat{j-1}}
    L(e_i) L(e_{j-1}) L(T_1) L(e_{j-1}^*) L(e_j^*) \\
    &= (-1)^{\widehat{i} \widehat{j} + \widehat{i} \widehat{j-1} + \widehat{j} \widehat{j-1} + \widehat{j-1} \widehat{j-1}}
    L(e_i) (L(e_{j-1}^*) L(e_{j-1}) - K_{j-1}^{-1}) L(e_j^*) \\
    &= (-1)^{(\widehat{i} + \widehat{j-1})( \widehat{j-1} + \widehat{j})}
    (L(e_i) L(e_{j-1}^*) L(e_{j-1}) L(e_j^*) - L(e_i) L(e_j^*) K_{j-1}^{-1})\\
    &= L(e_i) L(e_{j-1}^*) L(e_{j-1}) L(e_j^*) - L(e_i) L(e_j^*) K_{j-1}^{-1}.
  \end{align*}
  So we get the result.
\end{proof}

\begin{rem}\label{uc}
  Based on these identities on $\V^{\otimes d}$ and the fact that $L(e), L(e^*), K, K^{-1}$ all commute with $\mathscr{H}_{\infty}$,
  we can actually have a well-defined $U_q(\mathfrak{gl}(m|n))$-module structure on $\widetilde{T}(\V)$ and get the same result.
\end{rem}

Furthermore, we can get the following proposition.
\begin{prop}\label{q-final}
  For any $v_1, ...,v_k \in \V$, $v_1^*, ...,v_k^* \in \V^*$, we have
  \[
    L(v_k) \cdots L(v_1) L(v_1^*) \cdots L(v_k^*) \in \rho(U_q(\mathfrak{gl}(m|n))),
  \]
  where $\rho$ denotes the representation of $U_q(\mathfrak{gl}(m|n))$ on $\widetilde{T}(\V)$.
\end{prop}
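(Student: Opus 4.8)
The plan is to reduce the general statement to the basis case, where we already have the key identities from Theorem~\ref{q-cr-4}, and then run an induction on $k$ in parallel with the structure of the commutation relations in Proposition~\ref{q-cr-2}. First I would note that, since both sides of the desired membership are linear in each $v_r$ and each $v_r^*$, and since $\{e_1,\dots,e_{m+n}\}$ is a basis of $\V$ with dual basis $\{e_1^*,\dots,e_{m+n}^*\}$, it suffices to prove that
\[
  L(e_{i_k}) \cdots L(e_{i_1}) L(e_{j_1}^*) \cdots L(e_{j_k}^*) \in \rho(\mathbf{U}_{m|n})
\]
for all tuples $(i_k,\dots,i_1),(j_k,\dots,j_1)\in[m+n]^k$. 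Here I would also record that $\rho(\mathbf{U}_{m|n})$ is a subalgebra of $\mathrm{End}_{\mathbb{C}(q)}(\widetilde{T}(\V))$ containing all $K_i^{\pm 1}$, and that by Theorem~\ref{q-cr-4} together with Proposition~\ref{q-kc} every product $L(e_a)L(e_b^*)$ lies in $\rho(\mathbf{U}_{m|n})$ (for $a<b$ it is $E_{ab}K_a^{-1}$, for $a>b$ it is $K_a E_{ba}$ up to the relevant $K$-conjugation, and for $a=b$ it is $(K_a-K_a^{-1})/(q_a-q_a^{-1})$ by Lemma~\ref{q-cr-1}). This handles $k=1$.

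For the inductive step, assume the claim for all products of length $< k$. The aim is to move the innermost pair $L(e_{i_1})L(e_{j_1}^*)$ to the outside of the word, so that
\[
  L(e_{i_k}) \cdots L(e_{i_1}) L(e_{j_1}^*) \cdots L(e_{j_k}^*)
  = \kappa\, L(e_{i_1})L(e_{j_1}^*)\, L(e_{i_k})\cdots L(e_{i_2}) L(e_{j_2}^*)\cdots L(e_{j_k}^*) + (\text{shorter terms}),
\]
for some scalar $\kappa\in\mathbb{C}(q)^\times$, exactly as in the proof of Theorem~\ref{super duality theorem} in the classical case. The first factor $L(e_{i_1})L(e_{j_1}^*)$ is in $\rho(\mathbf{U}_{m|n})$ by the $k=1$ case, the remaining length-$(k-1)$ word is in $\rho(\mathbf{U}_{m|n})$ by the inductive hypothesis, and the correction terms should again be words of the same shape but of length $\le k-1$ (once one uses relation~\eqref{re3} to absorb the Kronecker-delta term), hence in $\rho(\mathbf{U}_{m|n})$ by induction; since $\rho(\mathbf{U}_{m|n})$ is a subalgebra, we conclude. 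The commuting-past is carried out by repeatedly applying \eqref{re1} to pull $L(e_{i_1})$ leftward through $L(e_{i_2}),\dots,L(e_{i_k})$, then \eqref{re3} to pull $L(e_{j_1}^*)$ leftward through $L(e_{j_2}^*),\dots,L(e_{j_k}^*)$ and then through the $L(e_{i_r})$'s --- each such move generates an $L(T_1^{\pm\gamma})$-type factor and, in the $L(e^*)$-through-$L(e)$ case, an extra $\delta$-term of strictly smaller length --- and finally moving the accumulated $L(T_\cdot)$ factors out of the way using the last two relations of Proposition~\ref{Relation 2}'s quantum analogue, i.e. $L(\sigma)L(e_i)=L(e_i)L(\sigma^{\uparrow 1})$ and $L(e_i^*)L(\sigma)=L(\sigma^{\uparrow 1})L(e_i^*)$ in the Hecke setting.

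The main obstacle is bookkeeping rather than conceptual: one has to verify that the Hecke-algebra factors $L(T_w)$ produced during the rearrangement genuinely cancel (or can be pushed off to act trivially) so that $\kappa$ is a nonzero scalar and not merely an element of $L(\mathscr{H}_\infty)$, and that every correction term really does have length $\le k-1$ after simplification. This is the quantum analogue of the subtlety already present in the proof of Theorem~\ref{super duality theorem}, and the needed cancellations are governed by Lemma~\ref{3t} and the compatibility of $\gamma$ with $T_k^{-1}=T_k+(q^{-1}-q)$; I expect it to go through by the same case analysis as in \eqref{re2}'s proof, though one should be careful that the induction is organized so that the shorter words appearing are literally of the form $L(e_{a_\ell})\cdots L(e_{a_1})L(e_{b_1}^*)\cdots L(e_{b_\ell}^*)$ with $\ell<k$, possibly after first invoking an Euler-operator normalization (the $\widetilde{T}(\V)$-analogue of Proposition~\ref{Euler operator}) to insert and remove a summed string $\sum L(e_c)L(e_c^*)$, which is how Theorem~\ref{super duality theorem} produced its length drop. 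If such an Euler operator is not available in the quantum setting, one instead argues directly that $L(e_a)L(e_b^*)\cdot(\text{length }k-1)$ already generates all of $\mathcal{A}$-type operators of length $k$, which is the route I would ultimately take.
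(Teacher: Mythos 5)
Your overall architecture matches the paper's: reduce to basis elements, handle $k=1$ via Theorem~\ref{q-cr-4}, Lemma~\ref{q-cr-1} and Proposition~\ref{q-kc}, and then induct on $k$ by extracting a single $L(e_a)L(e_b^*)$ factor to the left. What is missing, and what you yourself flag as "the main obstacle," is the actual mechanism by which the rearrangement is carried out \emph{without} leaving residual Hecke factors. The paper does not push $L(e_{i_1})$ and $L(e_{j_1}^*)$ leftward separately via \eqref{re1} and \eqref{re3}; instead it first proves a closed identity
\[
  L(e_i)L(e_j)L(e_k^*)=a\,L(e_i)L(e_k^*)L(e_j)+b\,L(e_j)L(e_k^*)L(e_i)+c\,K_k^{-1}L(e_i)+d\,K_k^{-1}L(e_j)
\]
with \emph{scalar} $a,b,c,d\in\mathbb{C}(q)$, i.e.\ with no $L(T_{\bullet})$ on the right. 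The entire content of the proof is in showing that the potential extra Hecke-factor term vanishes, which reduces to the observation that, in the case $i\neq j$, either $\gamma(i,j)=-\gamma(k,i)$ or $\gamma(j,i)=-\gamma(k,j)$, forcing $c_1c_2=0$ in the expansion $T_1^{\gamma(i,j)}=T_1^{-\gamma(k,i)}+c_1$, $T_1^{\gamma(j,i)}=T_1^{-\gamma(k,j)}+c_2$. Your proposal correctly identifies that "the needed cancellations are governed by Lemma~\ref{3t} and the compatibility of $\gamma$ with $T_k^{-1}=T_k+(q^{-1}-q)$," but stops at expecting this to work out rather than exhibiting the identity; that identity is the lemma you would have to prove to close the gap.

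Two smaller points. First, your description of the rearrangement says to pull $L(e_{j_1}^*)$ "leftward through $L(e_{j_2}^*),\dots,L(e_{j_k}^*)$," but in the word $L(e_{i_k})\cdots L(e_{i_1})L(e_{j_1}^*)\cdots L(e_{j_k}^*)$ the operator $L(e_{j_1}^*)$ is already to the left of all other starred factors; the paper's scheme only ever moves the innermost star past \emph{vectors}, never past other stars, and this is essential to avoid invoking \eqref{re2} (which would generate Hecke factors that really do not cancel so simply). Second, the length drop in the induction comes for free from the $K_k^{-1}L(e_\bullet)$ terms in the displayed identity; you do not need the quantum Euler operator here (nor the alternative "generates all of $\mathcal{A}$-type operators" route you sketch). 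The Euler operator is used in the paper only later, in the proof of Theorem~\ref{g-duality}.
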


\begin{proof}
  We first claim that
  \[
    L(e_i) L(e_j) L(e_k^*) = a L(e_i) L(e_k^*) L(e_j) + b L(e_j) L(e_k^*) L(e_i) + c K_k^{-1} L(e_i) + d K_k^{-1} L(e_j),
  \]
  where $a, b, c, d \in \mathbb{C}(q)$.
  Recall from Propositions~\ref{q-kc} and \ref{q-cr-2} that
  \begin{align*}
   L(e_i)L(e_j)L(T^{-\gamma(k, j)})L(e_k^*)&= (-1)^{\widehat{j} \widehat{k}}(L(e_i) L(e_k^*) L(e_j) - \delta_{j k} L(e_i) K_j^{-1})\\
      &= (-1)^{\widehat{j} \widehat{k}}L(e_i) L(e_k^*) L(e_j) + \delta_{j k}  q_k^{-\delta_{i, k}} K_k^{-1}L(e_i).
  \end{align*}
  Then, if $i = j$, we have
  \begin{align*}
L(e_i)L(e_j)L(e_k^*)&= (-1)^{\widehat{i}} q_i^{\gamma(k, i)} L(e_i)L(e_i)(L(T^{-\gamma(k, i)}))L(e_k^*)\\
      &= q_i^{\gamma(k, i)} L(e_i) (L(e_k^*)L(e_i) - \delta_{i k} K_i^{-1})\\
      &= q_i^{\gamma(k, i)} L(e_i) L(e_k^*)L(e_i) - \delta_{i k} K_k^{-1} L(e_i).
  \end{align*}
  If $i \neq j$, we have
  \begin{align*}
      & L(e_i)L(e_j)L(e_k^*) \\
      &= (-1)^{\widehat{i}\widehat{j}}L(e_j)L(e_i)L(T^{\gamma(i, j)})L(e_k^*)\\
      &= (-1)^{\widehat{i}\widehat{j}}L(e_j)L(e_i)L(T^{-\gamma(k, i)} + c_1)L(e_k^*)\\
      &= (-1)^{\widehat{i}\widehat{j}}L(e_j)L(e_i)L(T^{-\gamma(k, i)})L(e_k^*) + c_1 L(e_i)L(e_j)L(T^{\gamma(j, i)})L(e_k^*)\\
      &= (-1)^{\widehat{i}\widehat{j}}L(e_j)L(e_i)L(T^{-\gamma(k, i)})L(e_k^*) + c_1 L(e_i)L(e_j)L(T^{-\gamma(k, j)} + c_2)L(e_k^*)\\
      &= (-1)^{\widehat{i}\widehat{j}}L(e_j)L(e_i)L(T^{-\gamma(k, i)})L(e_k^*) + c_1 L(e_i)L(e_j)L(T^{-\gamma(k, j)})L(e_k^*) + c_1c_2 L(e_i)L(e_j)L(e_k^*),
  \end{align*}
  where $c_1, c_2$ is $\pm(q-q^{-1})$ or $0$.
  By direct calculation, we have $c_1=0$ or $c_2=0$ (i.e., $c_1c_2=0$), since $\gamma(i, j) = -\gamma(k, i)$ or $\gamma(j, i) = -\gamma(k, j)$.
  Then we get the claim.

  From the claim, we can see that
  \begin{align*}
    & L(e_{i_k}) \cdots L(e_{i_1}) L(e_{i_1}^*) \cdots L(e_{i_k}^*)\\
    &= a L(e_{i_{k}}) L(e_{i_k}^*)  \cdots \  + b L(e_{i_{k-1}}) L(e_{i_k}^*)  \cdots \  + c K_k^{-1} \cdots
  \end{align*}
  where $\cdots$ are elements of the form $L(e_{i_{j-1}}) \cdots L(e_{j_1}) L(e_{j_1}^*) \cdots L(e_{j_{k-1}}^*)$ and  $a, b, c \in \mathbb{C}(q)$.
  Continuing this procedure, we get the desired result.
\end{proof}

\section{Generalized quantum Schur-Sergeev duality}

\subsection{Euler operators}
For any nonnegative integer $k$, we denote by
  \[
    [k] = \frac{q^k - q^{-k}}{q - q^{-1}}\quad \mbox{and} \quad [k]! = [1][2] \cdots [k]
  \]
the $q$-quantum integers and $q$-quantum factorials, respectively.

For $i \in [m+n], a \in \mathbb{Z}$, set
\[
  [K_i:a] = \frac{q_i^a K_i - q_i^{-a} K_i^{-1}}{q_i - q_i^{-1}},
\]
\[
  [K_i]^h_! = [K_i:0][K_i:-1] \cdots [K_i:1-h], \quad \mathrm{for} \ h \in \mathbb{N_+},
\]
and $[K_i]^0_! = 1$.
Then by Lemma~\ref{q-cr-1} we have
\begin{equation}
  \underbrace{L(e_i) \cdots L(e_i)}_{h} \underbrace{L(e_i^*) \cdots L(e_i^*)}_{h} = [K_i]^h_!. \label{K_1}
\end{equation}

Set $\mathcal{I} = \{(i_d, i_{d-1}, ..., i_1) \in [m+n]^d~|~ i_d \le i_{d-1} \le \cdots \le i_1\}$.
For any $I \in \mathcal{I}$, let $[\mathbf{m}(I)]! = [\mathbf{m}_1(I)]! [\mathbf{m}_2(I)]! \cdots [\mathbf{m}_{m+n}(I)]!$
where $\mathbf{m}_k(I)$ is the multiplicity of $k$ in $I$.
By \eqref{K_1} and Proposition~\ref{q-kc} we have
\begin{equation}
    L(e_{i_d}) \cdots L(e_{i_1}) L(e_{i_1}^*) \cdots L(e_{i_d}^*) = \prod_{j \in [m+n]} [K_j]^{\mathbf{m}_j(I)}_!
    ,\quad I = (i_d, i_{d-1}, ..., i_1) \in \mathcal{I}. \label{K_2}
\end{equation}

Considering the Euler operator
\[
  A_d = \sum_{I \in \mathcal{I}} \frac{1}{[\mathbf{m}(I)]!} L(e_{i_d}) \cdots L(e_{i_1}) L(e_{i_1}^*) \cdots L(e_{i_d}^*),
\]
we have
\begin{lem}
  For any $\phi \in \widetilde{T}(\V)$,
  \[
    A_d(\phi) = \phi.
  \]
\end{lem}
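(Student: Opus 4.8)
The plan is to evaluate $A_d$ on the spanning set of $\widetilde{T}_d(\V)$ consisting of the elements $e_J\otimes T_\sigma$ and to reduce the identity to an elementary degree count for Gaussian binomial coefficients. Since $\widetilde{T}_d(\V)$ is spanned by such elements, by linearity it suffices to prove $A_d(e_J\otimes T_\sigma)=e_J\otimes T_\sigma$ for every $J=(j_d,\ldots,j_1)\in[m+n]^d$ and every $\sigma\in\mathfrak{S}_\infty$. The first step is to rewrite $A_d$ in terms of the diagonal operators $[K_j]^{h}_!$: by \eqref{K_2}, for each $I\in\mathcal{I}$ one has $L(e_{i_d})\cdots L(e_{i_1})L(e_{i_1}^*)\cdots L(e_{i_d}^*)=\prod_{j\in[m+n]}[K_j]^{\mathbf{m}_j(I)}_!$, so that
\[
  A_d=\sum_{I\in\mathcal{I}}\ \prod_{j\in[m+n]}\frac{[K_j]^{\mathbf{m}_j(I)}_!}{[\mathbf{m}_j(I)]!}.
\]
Since $I\mapsto\mathbf{m}(I)=(\mathbf{m}_1(I),\ldots,\mathbf{m}_{m+n}(I))$ is a bijection from $\mathcal{I}$ onto the set of $(m+n)$-tuples $(h_1,\ldots,h_{m+n})$ of nonnegative integers with $h_1+\cdots+h_{m+n}=d$, the outer sum may be replaced by a sum over all such tuples.

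Next I would compute the scalar by which each $[K_j]^{h}_!$ acts on the weight vector $e_J\otimes T_\sigma$. The operators $K_j$ act diagonally on $\widetilde{T}(\V)$ via $K_j.(e_J\otimes T_\sigma)=q_j^{\mathbf{m}_j(J)}(e_J\otimes T_\sigma)$, and $\tfrac{q_j^{k}-q_j^{-k}}{q_j-q_j^{-1}}=[k]$ for every integer $k$ (this holds whether $q_j=q$ or $q_j=q^{-1}$), so $[K_j:a]$ acts on $e_J\otimes T_\sigma$ as $[\mathbf{m}_j(J)+a]$. Consequently $[K_j]^{h}_!=[K_j:0][K_j:-1]\cdots[K_j:1-h]$ acts as $[\mathbf{m}_j(J)][\mathbf{m}_j(J)-1]\cdots[\mathbf{m}_j(J)-h+1]$, which equals $[\mathbf{m}_j(J)]!/[\mathbf{m}_j(J)-h]!$ for $h\le\mathbf{m}_j(J)$ and is $0$ for $h>\mathbf{m}_j(J)$. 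Hence $\dfrac{[K_j]^{h}_!}{[h]!}$ acts on $e_J\otimes T_\sigma$ as the Gaussian binomial coefficient $\binom{\mathbf{m}_j(J)}{h}_q:=\dfrac{[\mathbf{m}_j(J)]!}{[h]!\,[\mathbf{m}_j(J)-h]!}$, understood to be $0$ for $h>\mathbf{m}_j(J)$.

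Combining these steps and writing $n_j:=\mathbf{m}_j(J)$, so that $n_1+\cdots+n_{m+n}=d$, one obtains
\[
  A_d(e_J\otimes T_\sigma)=\Bigg(\sum_{h_1+\cdots+h_{m+n}=d}\ \prod_{j=1}^{m+n}\binom{n_j}{h_j}_q\Bigg)(e_J\otimes T_\sigma).
\]
The final step is the observation that collapses this sum: since $\binom{n_j}{h_j}_q=0$ unless $0\le h_j\le n_j$, the only tuple with $\sum_j h_j=d=\sum_j n_j$ contributing to the sum is $h_j=n_j$ for all $j$, and for that tuple $\prod_j\binom{n_j}{n_j}_q=1$; therefore $A_d(e_J\otimes T_\sigma)=e_J\otimes T_\sigma$, which proves the lemma. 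I do not expect a serious obstacle; the only points needing a little care are the identity $\tfrac{q_j^{k}-q_j^{-k}}{q_j-q_j^{-1}}=[k]$ (in particular when $q_j=q^{-1}$), which is what turns $[K_j]^{h}_!$ into a $q$-falling-factorial scalar, and the realization that no genuine $q$-Chu--Vandermonde summation is required here — a bare comparison of total degrees already isolates the single surviving term.
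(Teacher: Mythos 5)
Your proof is correct and follows essentially the same route as the paper's: both start from \eqref{K_2}, use the diagonal $K_j$-action on basis vectors, and conclude by the degree count that isolates the single nonzero term. Your version is marginally cleaner in two respects: by reading off the $K_j$-eigenvalue directly on an arbitrary $e_J\otimes T_\sigma$ you avoid the paper's intermediate reduction to sorted $J\in\mathcal{I}$ via $\mathscr{H}_d$-equivariance, and packaging the diagonal scalar as the $q$-binomial coefficient $\binom{\mathbf{m}_j(J)}{h_j}_q$ (which automatically vanishes for $h_j>\mathbf{m}_j(J)$) makes the collapse of the sum $\sum_{h_1+\cdots+h_{m+n}=d}\prod_j\binom{n_j}{h_j}_q=1$ immediately transparent, whereas the paper encodes the same fact as the Kronecker delta $\delta_{I,J}$ in its computation of $L(e_{i_d})\cdots L(e_{i_d}^*)(e_J)$.
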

\begin{proof}
  For any $I, J \in \mathcal{I}$, if $\mathbf{m}_i(I) = \mathbf{m}_i(J)$, we have
  \[
      [K_i]^{\mathbf{m}_i(I)}_!.e_{J} = [\mathbf{m}_i(I)]!e_{J},
  \]
  and if $\mathbf{m}_i(I) > \mathbf{m}_i(J)$, we have
  \[
      [K_i]^{\mathbf{m}_i(I)}_!.e_{J} = 0.
  \]
  In the case where $\mathbf{m}_i(I) < \mathbf{m}_i(J)$ for some $i$,
  there must exist some index $j$ such that $\mathbf{m}_j(I) > \mathbf{m}_j(J)$. Thus by \eqref{K_2}, one has
  \[
    L(e_{i_d}) \cdots L(e_{i_1}) L(e_{i_1}^*) \cdots L(e_{i_d})(e_J) = \delta_{I, J} \Bigg(\prod_{i \in [m+n]} [\mathbf{m}_i(I)]!\Bigg) e_J = \delta_{I, J} [\mathbf{m}(I)]! e_J.
  \]

  Now for any $L \in [m+n]^d$,
  by \eqref{actionTk}, there exists $J \in \mathcal{I}$ and $T\in \mathscr{H}_d$ such that $e_L = e_J .T$.
  Since $L(e_i)$ and $L(e_i^*)$ commute with $T$, we have
  \[
      A_d(e_L) = A_d(e_{J}). T
      = \sum_{I \in \mathcal{I}} \frac{1}{[\mathbf{m}(I)]!} \delta_{I, J} [\mathbf{m}(I)]! e_{J}. T = e_J. T = e_{L}.
  \]\end{proof}

\subsection{Descriptions of commutants}

For any $d \ge 1$ and $0 \le k \le d$, we denote by $\mathcal{A}_d^k$ the subspace of
$\mathrm{End}_{\mathbb{C}(q)}(\V^{\otimes d} \otimes _{\mathscr{H}_d} \mathscr{H}_{d+k})$
spanned by operators of the form
\[
  L(w_d)\cdots L(w_1)L(\sigma)L(v_1^*)\cdots L(v_d^*),
\]
with $w_1,\ldots,w_d\in \V$, $\sigma\in \mathscr{H}_{d+k}$,  and $v_1^*,\ldots,v_k^* \in \V^*$.

The representation $\pi_{d+k}$ of $\mathscr{H}_{d+k}$ on $\V^{\otimes d} \otimes _{\mathscr{H}_d} \mathscr{H}_{d+k}$ is given by right multiplication
and we identify $\pi_{d+k}(\mathscr{H}_{d+k})$ with $\mathscr{H}_{d+k}$.


\begin{thm}\label{g-duality}
  Asumme that $[d]! \neq 0$. We have
  \begin{equation*}
    \mathcal{A}_d^k = \mathrm{End}_{\mathscr{H}_{d+k}}(\V^{\otimes d} \otimes _{\mathscr{H}_d} \mathscr{H}_{d+k}),\end{equation*}and
    \begin{equation*}\mathrm{End}_{\mathcal{A}_d^k}(\V^{\otimes d} \otimes _{\mathscr{H}_d} \mathscr{H}_{d+k}) =\mathscr{H}_{d+k}.
  \end{equation*}
\end{thm}
\begin{proof}
  It is easy to see that $\mathcal{A}_d^k \subset \mathrm{End}_{\mathscr{H}_{d+k}}(\V^{\otimes d} \otimes _{\mathscr{H}_d} \mathscr{H}_{d+k})$.
 Since $\mathcal{H}_d(q)$ is semisimple, by  the dobule commutant theorem, it suffices to show
  $\mathrm{End}_{\mathscr{H}_{d+k}}(\V^{\otimes d} \otimes _{\mathscr{H}_d} \mathscr{H}_{d+k}) \subset \mathcal{A}_d^k$.

  For $f \in \mathrm{End}_{\mathscr{H}_{d+k}}(\V^{\otimes d} \otimes _{\mathscr{H}_d} \mathscr{H}_{d+k}), \phi \in \V^{\otimes d} \otimes _{\mathscr{H}_d} \mathscr{H}_{d+k}$, we have
    \begin{align*}
    f(\phi)& = f(A_d(\phi))\\
    &= f\left(\sum_{I \in \mathcal{I}} \frac{1}{[\mathbf{m}(I)]!} L(e_{i_d}) \cdots L(e_{i_1}) L(e_{i_1}^*) \cdots L(e_{i_d}^*) \phi\right)\\
      &= f\left(\sum_{I \in \mathcal{I}} \frac{1}{[\mathbf{m}(I)]!} e_{i_d} \cdots e_{i_1} \tau_{I}\right)\\
      &= \sum_{I \in \mathcal{I}} \frac{1}{[\mathbf{m}(I)]!} f(e_{i_d} \cdots e_{i_1}) \tau_{I}\\
      &= \sum_{I \in \mathcal{I}} \frac{1}{[\mathbf{m}(I)]!} L(f(e_{i_d} \cdots e_{i_1})) L(e_{i_1}^*) \cdots L(e_{i_d}^*) \phi,
    \end{align*}
    where $\tau_{I} = L(e_{i_1}^*) \cdots L(e_{i_d}^*) \phi \in \mathscr{H}_{d+k}$.
    So\[f = \sum_{I \in \mathcal{I}} \frac{1}{[\mathbf{m}(I)]!} L(f(e_{i_d} \cdots e_{i_1})) L(e_{i_1}^*) \cdots L(e_{i_d}^*).\]
    Since $L(f(e_{i_d} \cdots e_{i_1})) \in \V^{\otimes d} \otimes _{\mathscr{H}_d} \mathscr{H}_{d+k}$,
    we have $f \in \mathcal{A}_d^k$.
\end{proof}

\begin{cor}[Mitsuhashi]\label{q-Schur duality}
  Asumme that $[d]! \neq 0$. For each $d \ge 1$, we have
  \begin{equation*}
    \rho_d(U_q(\mathfrak{gl}(m|n))) = \mathrm{End}_{\mathscr{H}_{d}}(\V^{\otimes d}),\end{equation*}and
    \begin{equation*}\mathrm{End}_{\rho_d(U_q(\mathfrak{gl}(m|n)))}(\V^{\otimes d}) =\mathscr{H}_{d},
  \end{equation*}
  where $\rho_d$ denotes the representation of $U_q(\mathfrak{gl}(m|n))$ on $\V^{\otimes d} \cong \V^{\otimes d} \otimes _{\mathscr{H}_d} \mathscr{H}_{d}$.
\end{cor}

\begin{proof}
  We just need to prove that $\rho_d(U_q(\mathfrak{gl}(m|n))) = \mathcal{A}_d^0$.

  Proposition~\ref{q-final} shows that $\mathcal{A}_d^0 \subset \rho_d(U_q(\mathfrak{gl}(m|n)))$. The other side is proved as follows.
  We first notice that $K_i \in \mathrm{End}_{\mathscr{H}_{d}}(\V^{\otimes d})$. In the proof of Theorem~\ref{g-duality}, we have seen that
  $K_i \in \mathcal{A}_d^0$. Actually, we have
  \[
    K_i = \sum_{I \in \mathcal{I}} \frac{1}{[\mathbf{m}(I)]!}
    q_{j}^{\delta_{i i_d} + \cdots + \delta_{i i_1}} L(e_{i_d}) \cdots L(e_{i_1}) L(e_{i_1}^*) \cdots L(e_{i_d}^*).
  \]
  It follows from Theorem~\ref{q-cr-4} that $E_{ij} = L(e_i)L(e_j^*) K_i$ and $ E_{ij} = K_j^{-1} L(e_j)L(e_i^*)$ for $i < j$ on ${\V^{\otimes d}}$. So these elements commute with $\mathcal{B}_{d}$, and hence $\rho_d(U_q(\mathfrak{gl}(m|n))) \subset \mathcal{A}_d^0$.
\end{proof}


\end{document}